\newtheorem{example}[thm]{Example}
\newcommand{\eto}{\stackrel{e}{\longrightarrow}}
\newcommand{\email}[1]{\protect\href{mailto:#1}{#1}}
\DeclareMathOperator*{\argmin}{argmin}
\title{\sffamily Gaussian smoothing gradient descent for minimizing functions (GSmoothGD)}
\begin{document}

\author{Andrew Starnes
    \thanks{Behavioral Research Learning Lab, Lirio AI Research, Lirio, Knoxville, TN (\email{astarnes@lirio.com}, \email{adereventsov@lirio.com}, \email{cwebster@lirio.com}).}
\and Anton Dereventsov\footnotemark[1]
\and Clayton Webster\footnotemark[1]
}

\maketitle

\begin{abstract}
This work analyzes the convergence of a class of smoothing-based gradient descent methods when applied to optimization problems.
In particular, Gaussian smoothing is employed to define a nonlocal gradient that reduces high-frequency noise, small variations, and rapid fluctuations in the computation of the descent directions while preserving the structure and features of the loss landscape.
The resulting Gaussian smoothing gradient descent (GSmoothGD) approach can facilitate gradient descent in navigating away from and avoiding local minima with increased ease, thereby substantially enhancing its overall performance even when applied to non-convex optimization problems.
This work also provides rigorous theoretical error estimates on the rate of convergence of GSmoothGD iterates. These estimates exemplify the impact of underlying function convexity, smoothness, input dimension, and the Gaussian smoothing radius.
To combat the curse of dimensionality, we numerically approximate the GSmoothGD nonlocal gradient using Monte Carlo (MC) sampling and provide a theory in which the iterates converge regardless of the function smoothness and dimension.
Finally, we present several strategies to update the smoothing parameter aimed at diminishing the impact of local minima, thereby rendering the attainment of global minima more achievable.
Computational evidence complements the present theory and shows the effectiveness of the MC-GSmoothGD method compared to other smoothing-based algorithms, momentum-based approaches, and classical gradient-based algorithms from numerical optimization.


\end{abstract}

\begin{keywords}
Gaussian smoothing, gradient-free optimization, high-dimensional optimization, non-convex optimization, black-box optimization, Monte Carlo integration
\end{keywords}


\section{Introduction}
\label{sec:intro}
Our aim is to solve for the global extrema of a high-dimensional non-convex objective function $f: \R^d \rightarrow \R$.  To achieve this goal we consider the unconstrained optimization problem, parameterized by a $d$-dimensional vector $\bmx = (x_1, \ldots, x_d) \in \R^d$, i.e.,
\begin{equation}
\label{eq:opt}
    \min_{\bmx \in \R^d} f(\bmx). 
\end{equation}
This challenging setting appears in a myriad of applications including: 
training neural networks and large-scale language models \cite{10.1093/nsr/nwad124,sun-etal-2022-bbtv2};
object detection and feature learning (e.g., \cite{zhao2019object} and the references therein);
portfolio optimization and option pricing \cite{app10020437,https://doi.org/10.1002/asmb.2209}; 
as well as material, drug, and structural design \cite{LIU2017159,sym13111976}.
  
Standard gradient-based optimization (GD) methods prove inefficient due to the abundance of local minima that can ensnare the optimization process.
Thus \eqref{eq:opt} is typically solved with a derivative- or gradient-free optimization approach~\cite{FGKM18,NesterovSpokoiny15,RiosSahinidis13,Larson_et_al_19,MWDS18,salimans2017evolution,CRSTW18,Hansen_Ostermeier_CMA_01,hansen2006cma,osher2019,dereventsov2022adaptive,10.1016/j.matdes.2020,zhang2020novel}. 
As such, throughout this effort we also assume that $f(\bmx)$ is only available by virtue of function evaluations, and the gradient $\nabla f(\bmx)$ is generally inaccessible, either because it is undefined, rapidly fluctuating, noisy, or too difficult to compute.

Of particular interest to this effort is a class of such techniques known as Gaussian smoothing (GS), 
which involves a convolution operation that integrates the product of a given target function and a Gaussian kernel over all possible shifts. 
See Figure~\ref{fig:picture_of_smoothing} for an example of how this convolution acts on $f$.
Using the Gaussian distribution, sets this apart from work done on convolutions with compactly supported distributions (e.g., see \cite[Example 7.19]{rockafellar2009variational}).
The amount of smoothing is controlled by the standard deviation of the Gaussian distribution, with larger values resulting in broader and more pronounced smoothing effects, while smaller values preserve more details of the function.  In particular, similar to the works~\cite{Nesterov_book_2004,NesterovSpokoiny15,dereventsov2022adaptive}, we introduce the notion of GS of the objective function $f(\bmx)$ in~\eqref{eq:opt}.
Let $\sigma > 0$ be a global smoothing parameter and denote by $f_\sigma(\bmx)$ the Gaussian smoothing of $f$ with radius $\sigma$, that is
\begin{equation}
\label{eq:f_sigma}
    f_\sigma(\bmx) = \frac{1}{\pi^{\nicefrac{d}{2}}} \int_{\mathbb{R}^d} f(\bmx + \sigma\bmu) 
    \, e^{-\|\bmu\|_2^2} \,\mathrm{d} \bmu
    = \mathbb{E}_{\bmu \sim \mathcal{N}(0,\mathbb{I}_d)}
    \big[ f(\bmx + \sigma\sqrt{2}\bmu) \big],
\end{equation}
where $\mathcal{N}(0,\mathbb{I}_d)$ is a standard $d$-dimensional Gaussian distribution.
We remark that $f_\sigma$ preserves important features and structure of the objective function including, such as convexity, $L$ smoothness, and is often differentiable even when $f$ is not for $\sigma>0$.

\begin{figure}
    \centering
    \def\s{1}
    \begin{tikzpicture}[scale=0.6]
        \begin{axis}[
            axis lines = center,
            xtick={0},
            ytick={0},
            xmin=-2,
            xmax=2,
            ymin=-1.5,
            ymax=2,
        ]
        \addplot [domain=-2:2, samples=100,latex-latex,dashed,thick]{x^4-2*x^2+cos(deg(2*3.14*x))} node[right,pos=0.75] {$f$};
        \addplot [domain=-2:2, samples=100,latex-latex,thick]{x^4+(3*\s^2-2)*x^2+0.75*\s^4-\s^2+cos(deg(2*3.14*x))*2.72^(-9.87*\s^2)} node[right,pos=0.551] {$f_{\s}$};
        \end{axis}
    \end{tikzpicture}
    \def\s{0.5}
    \begin{tikzpicture}[scale=0.6]
        \begin{axis}[
            axis lines = center,
            xtick={0},
            ytick={0},
            xmin=-2,
            xmax=2,
            ymin=-1.5,
            ymax=2,
        ]
        \addplot [domain=-2:2, samples=100,latex-latex,dashed,thick]{x^4-2*x^2+cos(deg(2*3.14*x))} node[right,pos=0.75] {$f$};
        \addplot [domain=-2:2, samples=100,latex-latex,thick]{x^4+(3*\s^2-2)*x^2+0.75*\s^4-\s^2+cos(deg(2*3.14*x))*2.72^(-9.87*\s^2)} node[left,pos=0.625] {$f_{\s}$};
        \end{axis}
    \end{tikzpicture}
    \def\s{0.1}
    \begin{tikzpicture}[scale=0.6]
        \begin{axis}[
            axis lines = center,
            xtick={0},
            ytick={0},
            xmin=-2,
            xmax=2,
            ymin=-1.5,
            ymax=2,
        ]
        \addplot [domain=-2:2, samples=100,latex-latex,dashed,thick]{x^4-2*x^2+cos(deg(2*3.14*x))} node[right,pos=0.75] {$f$};
        \addplot [domain=-2:2, samples=100,latex-latex,thick]{x^4+(3*\s^2-2)*x^2+0.75*\s^4-\s^2+cos(deg(2*3.14*x))*2.72^(-9.87*\s^2)} node[left,pos=0.739] {$f_{\s}$};
        \end{axis}
    \end{tikzpicture}
    \caption{Example of how Gaussian smoothing transforms $f(x)=x^4-2x^2+\cos(2\pi x)$. For $\sigma\geq\sqrt{\nicefrac{2}{3}}$, $f_{\sigma}$ is convex despite the fact that $f$ is not convex. Minimizing $f_1$ is much easier than minimizing $f$. As $\sigma\to 0$, $f_{\sigma}$ approaches $f$.}
    \label{fig:picture_of_smoothing}
\end{figure}
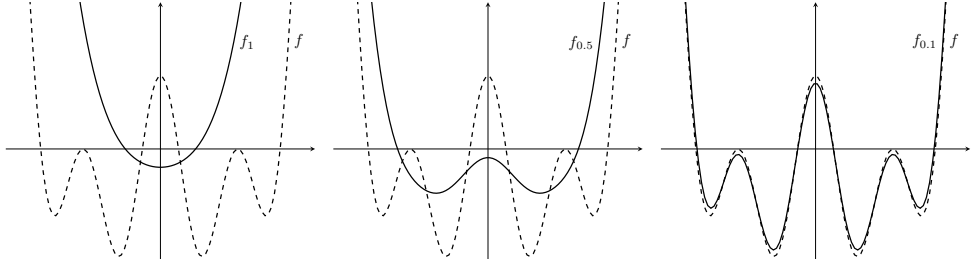

For well-behaved functions (e.g., $f$ is Lipschitz or its gradient is), we can exploit that the gradient of $f_\sigma(\bmx)$ is given by 
\begin{equation}
\label{eq:fs_grad}
	\nabla f_\sigma(\bmx) 
	= \frac{2}{\sigma \pi^{\nicefrac{d}{2}}}
	    \int_{\mathbb{R}^d} \bmu f(\bmx + \sigma \bmu) \, e^{-\|\bmu\|_2^2} \, \mathrm{d}\bmu
    = \frac{2\sqrt{2}}{\sigma}\, \mathbb{E}_{\bmu \sim \mathcal{N}(0,\mathbb{I}_d)}
        \big[ \bmu f(\bmx + \sigma\sqrt{2} \bmu) \big],
\end{equation}
to replace the standard local gradient with a nonlocal approximation, which aims to reduce high-frequency noise, small variations, and rapid fluctuations in the computation of the descent directions.  The resulting Gaussian smoothing gradient descent (GSmoothGD), described by Algorithm~\ref{alg:GSmoothGD}, aims to solve \eqref{eq:opt} by replacing it with the following $(d+1)$-dimensional surrogate problem

\begin{equation}
\label{eq:surrogate_opt}
    \min_{(\sigma,\bmx) \in \mathbb{R}^+\times\R^d} f_{\sigma}(\bmx),
\end{equation}
and satisfies
\begin{equation}
    \inf_{\bmx\in\mathbb{R}^d}f_{\sigma}(\bmx)
    \to\inf_{\bmx\in\mathbb{R}^d}f(\bmx)
\end{equation}
as $\sigma\to 0$.
%
%
%
The GSmoothGD approach can facilitate gradient descent in navigating away from and avoiding local minima with increased ease, thereby substantially enhancing the overall performance when applied to non-convex optimization problems.
As such, this work also provides rigorous theoretical error estimates on the GSmoothGD iterates rate of convergence, that exemplify the impact of underlying function convexity, smoothness, and input dimension, as well as the Gaussian smoothing radius.  As far as we are aware, this is the first detailed analysis of convergence rates of GSmoothGD for general non-convex, $L$-smooth problems \eqref{eq:opt}.
See the end of the Section~\ref{sec:related_work} for a more detailed discussion of this.
In addition, to provide a better theoretical understanding of GSmoothGD we also rigorously estimate the relationship between the minimizers of $f$ and $f_{\sigma}$ for convex, $L$-smooth functions.

\begin{algorithm}
    \caption{Gaussian smoothing gradient descent (GSmoothGD)}
    \label{alg:GSmoothGD}
    \begin{algorithmic}[1]
        \Require $f:\mathbb{R}^d\to\mathbb{R}$, $(\sigma_k)_{k=1}^{K}$, $x_0\in\mathbb{R}^d$, $t>0$
        \For{$k=1\to K$}
            \State $x_k = x_{k-1} - t\nabla f_{\sigma_k}(x_{k-1})$
        \EndFor 
    \end{algorithmic}
\end{algorithm}

Numerical estimation of the search directions is an additional problem due to the complex nature of objective functions in high-dimensional spaces. 
A popular set of approaches to compute the gradient of a Gaussian smoothed function are known as evolutionary strategies (ES)~\cite{salimans2017evolution,Hansen_CMA,hansen2006cma,liu2019trust,CRSTW18}, which are a class of algorithms inspired by natural evolution~\cite{Wierstra_NES_14}.    
Recently, interest in ES methods has been reinvigorated and has become a popular approach in several machine learning problems such as training neural networks~\cite{Such-GA17,MorseStanley16,Cui_NeurIPS18} and reinforcement learning~\cite{zhang2020accelerating,Khadka-CERL,SigaudStulp13,salimans2017evolution}.
Covariance matrix adaptation evolution strategy (CMA-ES) is a popular evolutionary algorithm used for continuous optimization problems. Several important papers have contributed to the understanding of the convergence properties of CMA-ES \cite{hansen2006cma,Hansen_CMA,Hansen_Ostermeier_CMA_01}.  These works explore the convergence theory of CMA-ES under various settings, including scenarios with limited function evaluations, large population sizes, and even in the presence of noise, which is often encountered in real-world optimization problems. 
As such, these methods are particularly useful when solving optimization problems related to nonconvex and nonsmooth objective functions.
We note that the current work is concerned with the convergence properties of the smoothing-based optimization algorithms. Therefore, our numerical experiments exclude the ES and CMA-ES algorithms because they operate differently from Algorithm~\ref{alg:GSmoothGD}, which is the main focus of our study.
More recently, another class of ES strategies has emerged, known as directional Gaussian smoothing (DGS)~\cite{zhang2020accelerating,tran2023convergence,zhang2020novel,dereventsov2022adaptive,10.1016/j.matdes.2020}, 
that specifically averages the original objective function along $d$ orthogonal directions.  As such, the representation of the partial derivatives of the smoothed function along these directions can be reduced to  one-dimensional integrals, as opposed to the more complex $d$-dimensional integrals used in standard ES methods. Consequently, one can approximate the averaged partial derivatives using deterministic \cite{zhang2020accelerating,tran2023convergence,zhang2020novel} or adaptive \cite{dereventsov2022adaptive} Gauss-Hermite quadrature rules.   

However, for increased accuracy, both ES and DGS approaches must expand the size of the domain, and expend more effort approximating either the $d$-dimensional integral or the integral within each individual direction. The resulting explosion in computational effort is a symptom of the {\em curse of dimensionality}.  In order to combat this grand challenge, similar to \cite{iwakiri2022single,mobahi2015link}, we numerically approximate the $d$-dimensional GSmoothGD nonlocal gradient \eqref{eq:fs_grad} with the use of Monte Carlo (MC) sampling (e.g., see \cite{Fishman_96,Gunzburger:2014hi,Caflisch} and the references therein), called MC-GSmoothGD, and provide a theory in which the iterates converge regardless of the function smoothness and dimension.
To the best of our knowledge, this constitutes the inaugural in-depth analysis of MC-GSmoothGD convergence rates with a varying smoothing parameter in the context of high-dimensional non-convex problems. Moreover, this work also addresses the calculation of the smoothing parameter $\sigma$, with the details described in Section \ref{sec:compute_sigma}. 
Again, see the end of the Section~\ref{sec:related_work} for a more detailed discussion of this.

\subsection{Paper organization}
\label{sec:toc}
%
The paper is organized as follows.
The remainder of Section \ref{sec:intro} is focused on describing an important connection between Gaussian smoothing (or homotopy methods) and a class of partial differential equations (PDEs) known as the heat equation \cite{evans2010partial}, as well as providing an overview of previous works.
Section~\ref{sec:notation} introduces the notation and nomenclature used throughout.
In Sections \ref{sec:non-convex}, we first describe the simplicity of GSmoothGD and then provide the first complete convergence analysis of such approaches for both high-dimensional convex and non-convex functions.
In Section \ref{sec:mins}, we provide a thorough analysis of the  relationship between minimizers and smoothed minimizers. 
In Section \ref{sec:num_GSmoothGD}, we describe various numerical approximation schemes for Gaussian smoothing, including MC, variance of the gradient of the smoothed function, and Gaussian homotopy.
We also discuss the various strategies employed to appropriately update the smoothing parameter $\sigma$.
In Section \ref{sec:numerics}, we numerically compare Gaussian smoothing with other smoothing-based algorithms, as well as other conventional methods such as momentum-based approaches, and classical gradient-based algorithms from numerical optimization.
In Section \ref{sec:conclusions}, we provide several concluding remarks and possible future directions we are excited to pursue.
Finally, Appendix \ref{app:notation} contains the proofs of the background results from Section~\ref{sec:notation} and Appendix \ref{app:convergence} contains the proofs of the main convergence results from Sections~\ref{sec:convex}, \ref{sec:non-convex}, \ref{sec:MCGSmoothGD}.

\subsection{Related works}
\label{sec:related_work}

Gaussian smoothing has been studied across many disciplines, which creates two almost distinct paths of research.
We divide the work that has been done so far into these two areas of research, the first being from the perspective of evolutionary strategies and the second being motivated by homotopy continuation and partial differential equations (PDEs).

\subsection*{Connections to evolutionary strategies}
\label{sec:evol_strat}

Motivated by~\cite{dereventsov2022adaptive},
this effort can appropriately be classified in a category of methods known as smoothing-based and even gradient-free optimization.  The development and analysis of such techniques are extensive, see, e.g.,~\cite{mobahi2012seeing,mobahi2012gaussian,mobahi2016closed,hazan2016graduated,mobahi2015theoretical,mobahi2015link,mobahi2016training,jin2018local,duchi2015optimal,zhang2017hitting,iwakiri2022single,Nesterov_book_2004,NesterovSpokoiny15} and the references therein. However, to date, this is the first detailed analysis of convergence rates of GSmoothGD for general non-convex problems, given by \eqref{eq:opt}, has been unavailable in the current and existing literature. 
Several works have enriched our comprehension of the convergence characteristics of CMA-ES, as evidenced by studies such as \cite{hansen2006cma,Hansen_CMA,Hansen_Ostermeier_CMA_01,salimans2017evolution,liu2019trust,CRSTW18,Cui_NeurIPS18}. These investigations delve into the convergence theory of CMA-ES across a spectrum of scenarios, encompassing limited function evaluations, large population sizes, and the incorporation of noise, a common real-world optimization challenge. As a result, these methodologies offer significant utility in the optimization of non-convex and non-smooth objective functions.

The recent effort \cite{tran2023convergence} investigates the convergence of 
directional Gaussian smoothing (DGS), first introduced in~\cite{zhang2020accelerating,zhang2020novel},
in the very specific case when the objective function consists of a convex component perturbed by oscillating noise. The theory demonstrates that the DGS iterations exhibit exponential convergence toward a narrowed region around the solution for strongly convex functions. The extent of this region is defined by the wavelength of the noise.  Furthermore, they establish a connection between the optimal values of the Gaussian smoothing radius and the noise wavelength. This connection validates the benefits of employing a moderate or large smoothing radius in this particular situation.

\subsection*{Connections to Gaussian homotopy methods and PDEs}
\label{sec:pdes}

Recall that the heat equation is the initial value problem given by
\begin{equation}
\label{eqn:heateqn}
    \left\{\begin{array}{ll}
        u_t=\epsilon\Delta u\qquad&(\bmx,t)\in\mathbb{R}^d\times(0,\infty)\\
        u(\bmx,0)=f(\bmx)&\bmx\in\mathbb{R}^d
    \end{array}\right.
\end{equation}
The fundamental solution to the heat equation (HE-PDE) is given by
\begin{equation}
    u(\bmx,t)=\mathbb{E}_{\bmu \sim \mathcal{N}(0,2\epsilon^2t\mathbb{I}_d)}[f(\bmu)].
\end{equation}
This means that the smoothing function $f_{2\sqrt{t}}(\bmx)$ is a solution to the heat equation as well.
Explicitly, we have
\begin{equation}
\label{eqn:smoothing_heateqn}
    \frac{\partial}{\partial\sigma}f_{\sigma}(\bmx)
    =\frac{\sigma}{2}\Delta_{\bmx} f_{\sigma}(\bmx).
\end{equation}
In other words, using the heat equation smoothes the initial value function.

The heat equation is a particular example of the viscous Hamilton-Jacobi PDE which is given by
\begin{equation}
    \left\{\begin{array}{ll}
        u_t=\epsilon\Delta u-H(\nabla u)\qquad&(\bmx,t)\in\mathbb{R}^d\times(0,\infty)\\
        u(\bmx,0)=f(\bmx)&\bmx\in\mathbb{R}^d
    \end{array}\right.
\end{equation}
In particular, when $H(\cdot)=0$ we recover the heat equation.
In the case that $\epsilon=0$, we have the standard Hamilton-Jacobi PDE (HJ-PDE), which can be used to smooth functions as well.
HJ-smoothing attempts to remove narrow minima from $f$ regardless of depth, whereas HE-smoothing attempts to remove shallow minima regardless of width.

Motivated by~\cite{sagun2016eigenvalues} and statistical physics, the effort~\cite{baldassi2016local} introduces optimizing via local entropy loss, which favors minima with low eigenvalues in Hessian (and hence wider minima).  The authors also introduce Entropy-SGD and compare numerical results on image classification problems (using RNNs and CNNs) with heat equation smoothing, concluding this approach is 
{\em very different from local entropy and can introduce an artificial minimum between two nearby sharp valleys which is detrimental to generalization}.
However, these artificial minima do not persist when $\sigma\to 0$ (i.e., when finding the minimum of $f$).
For analysis of how the HE-smoothed minimizers relate to the original minimizers, see Section~\ref{sec:mins}.
Furthermore, the work~\cite{chaudhari2018deep} shows that the local entropy loss is a solution to the viscous HJ-PDE, and that viscous HJ-PDE smoothing has much better empirical performance as well as improved (theoretical) convergence rates compared to GD.  

Inspired by~\cite{chaudhari2018deep}, the effort~\cite{osher2022laplacian} introduces Laplacian smoothing (LSGD), see Algorithm~\ref{alg:lsgd}, as $x_{k+1}=x_k-t_kA^{-1}_{\sigma}\nabla f_{i_k}(x_k)$, where $A_{\sigma}$ is a particular circulant convolution matrix and $\nabla f_{i_k}$ is the SGD gradient using observation $i_k$ (that is, HJ-PDE with $H(\cdot)=\frac{1}{2}\langle\cdot,A^{-1}_{\sigma}\cdot\rangle$).
Modified Laplacian smoothing gradient descent (mLSGD) is given in \cite{kreusser2022deterministic} where the smoothing parameter is allowed to change between iterations, denoted by $\sigma(k)$.
The authors show that this approach is less susceptible to saddle points and converges as fast as (and anecdotally faster than) GD.
We remark that there is no mathematical reason that the matrix, denoted $A_{\sigma(k)}$ for mLSGD, \textit{needs} to be the graph Laplacian, in fact, any symmetric, positive definite matrix will satisfy the HJ-PDE described in \cite{osher2022laplacian}.

\begin{algorithm}
    \caption{LSGD}
    \label{alg:lsgd}
    \begin{algorithmic}[1]
        \Require $f:\mathbb{R}^d\to\mathbb{R}$, $\sigma\geq 0$, $x_0\in\mathbb{R}^d$, $t>0$
        \For{$k=1\to K$}
            \State $x_k = x_{k-1} - t A_{\sigma}^{-1}\left(\nabla f(x_{k-1})\right)$
        \EndFor 
    \end{algorithmic}
\end{algorithm}

One major benefit of using HE-smoothing is that we do not need access to the gradient of $f$ and instead can just use function evaluations to approximate the gradient of $f_{\sigma}$.
This cannot be done with HJ-smoothing, which is why $\nabla f(x)$ is required for Algorithm~\ref{alg:lsgd}.

In addition to the connection with PDEs, smoothing can be motivated from optimization via homotopy continuation.\footnote{In this context, this method could also be called Graduated Non-Convexity (see \cite{blake1987visual}).}
Optimization via homotopy continuation, parameterized by $t$,
creates a homotopy between the original $f$ ($t=0$) and a smooth function ($t=1$).
The method finds the minimizers of the homotopy at $t$, starting at $t=1$ and iteratively reduces $t$ to 0.
That is, it starts by finding the minimizers of a nice function and repeats this with less smooth functions until it arrives at the original function.
Explicitly, let $h:\mathbb{R}^d\times [0,T]\to\mathbb{R}$ where $h(x,0)=h(x)$, $h(x,T)$ is ``smooth'', and $h(x,t)$ is continuously differentiable for $x\in\mathbb{R}^d$ and $t>0$.
Denote the path of the minimizer by $x(t)$, where for $t\geq 0$, $x(t)$ satisfies $\nabla g(x(t),t)=0$ and is continuous.
Obviously, such a path may not exist.
Furthermore, there's not always a clear choice for how to construct the homotopy.
When the homotopy is created by convolving the function with a Gaussian kernel (where $\sigma$ both acts as $t$ and controls the variance), this method is called Gaussian Homotopy Continuation  (see Algorithm~\ref{alg:homotopycontinuation}).
In \cite{mobahi2015link}, they show that evolving the objective function using  a Gaussian homotopy provides optimal convexification of the function.

\begin{algorithm}
    \caption{Optimization by Gaussian Homotopy Continuation}
    \label{alg:homotopycontinuation}
    \begin{algorithmic}[1]
        \item Input: $f:\mathbb{R}^d\to\mathbb{R}$, $\{\sigma_k\}_{k=1}^{K}$ s.t. $0<\sigma_{k+1}<\sigma_k$, $x_0\in\mathbb{R}^d$
        \For{$k=1\to K$}
            \State $x_k =$ local minimizer of $f_{\sigma_k}(x)$ initialized at $x_{k-1}$
        \EndFor
        \item Output: $x_K$
    \end{algorithmic}
\end{algorithm}

In practice, this method leads to an algorithm with two loops, an outer loop that decreases $\sigma_k$ and an inner loop that performs gradient descent on $f_{\sigma_k}$.
Motivated by the connection between the derivative with respect to the smoothing parameter and the second space derivative given by the heat equation, \cite{iwakiri2022single} propose a single loop Gaussian homotopy continuation method (SLGH) (see Algorithm~\ref{alg:slgh}), which is a particular instance of GSmoothGD.
In~\cite{iwakiri2022single}, they prove that SLGH converges for functions that are Lipschitz and have Lipschitz gradients.

\begin{algorithm}
    \caption{Deterministic Single Loop GH algorithm}
    \label{alg:slgh}
    \begin{algorithmic}[1]
        \Require Iteration number $T$, initial solution $x_1$, initial smoothing parameter $\sigma_1$, step size $\beta$ for $x$, step size $\eta$ for $t$, decreasing factor $\gamma\in(0,1)$, sufficient small positive value $\epsilon$
        \For{$k=1\to T$}
            \State $x_{k+1}=x_k-\beta\nabla f_{\sigma_k}(x_k)$
            \State $\sigma_{k+1}=\left\{\begin{array}{ll}
                \gamma \sigma_k\text{ (SLGH$_r$)}\\
                \max\{\min\{\sigma_k-\eta\frac{\partial}{\partial\sigma}f_{\sigma_k}(x_k),\gamma\sigma_k\},\epsilon\}\text{ (SLGH$_d$)}
            \end{array}\right.$
        \EndFor
    \end{algorithmic}
\end{algorithm}

Since both~\cite{NesterovSpokoiny15} and~\cite{iwakiri2022single} study variants of SmoothGD, we want to explicitly point out how our analysis differs from their results.
Neither paper focuses on the minimizers of $f_{\sigma}$, which makes our analysis about their behavior entirely novel.
As it relates to this paper, the primary focus of \cite{NesterovSpokoiny15} is on the ``random gradient-free oracles'' which are Monte Carlo estimates of (\ref{eq:fs_grad}).
The majority of our results use the actual gradient of the smoothed function, but our study of MC-GSmoothGD uses two of these oracles.
The analysis done for non-convex functions in \cite{NesterovSpokoiny15} is most similar to our analysis of MC-GSmoothGD, but they fix $\sigma$ and our focus is to examine the impact that changing $\sigma$ has on convergence.
Furthermore, they mention that $\sigma$ could be allowed to change, but they omit any proof of convergence because it ``is quite long and technical''.
We prove this result in Theorem~\ref{thm:mcGSmoothGD}.

As discussed before, \cite{iwakiri2022single} is very closely related to this paper.
However, despite the similarity of some of the results, they assume that both the target function and its gradient are Lipschitz; we require either one of these, but not both and achieve similar convergence results.
This means that our analysis applies to a much larger class of functions.
Additionally, in the convex case, we improve upon their results and still only assume that either the function or its derivative is Lipschitz.
Finally, \cite{iwakiri2022single} makes the heavy restriction that $f$ is bounded (see the discussion after Lemma~\ref{lem:differentsmoothingvalues} for more details).


\section{Background and preliminaries}
\label{sec:notation}
Throughout this effort, we assume that $f:\mathbb{R}^d\rightarrow\mathbb{R}$. 
We often make the standard assumption that our functions are $L$-smooth and convex. 
We restate the definitions here for the convenience of the reader.
\begin{defn}
\label{def:fsmooth}
We say that $f$ is $M$-Lipschitz if for all $\bmx,\bmy\in\mathbb{R}^d$
\s
    \|f(\bmx)-f(\bmy)\|\leq M\|\bmx-\bmy\|.
\f
We say that $f$ is $L$-smooth if $\nabla f$ is $L$-Lipschitz.
\end{defn}

Instead of using the definition of $L$-smoothness, the majority of the time, we use the following well-known equivalent formulation (which we only state as an implication of the definition).

\begin{prop}
\label{prop:altdefflsmooth}
If $f:\mathbb{R}^d\to\mathbb{R}$ is $L$-smooth, then for $\bmx,\bmy\in\mathbb{R}^d$
\start
\label{eqn:proplsmooth}
    \big| f(\bmy)-f(\bmx)-\langle\nabla f(\bmx),\bmy-\bmx\rangle \big| \leq\frac{L}{2}\|\bmy-\bmx\|^2.
\finish
\end{prop}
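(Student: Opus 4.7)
The plan is to prove this via the one-dimensional fundamental theorem of calculus applied along the line segment between $\bmx$ and $\bmy$, then to control the residual using the Lipschitz property of $\nabla f$. Since $\nabla f$ is $L$-Lipschitz it is in particular continuous, so $f \in C^1$ and the FTC applies without issue.

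First I would parameterize the segment by $g(t) = f(\bmx + t(\bmy-\bmx))$ for $t \in [0,1]$ and compute $g'(t) = \langle \nabla f(\bmx + t(\bmy-\bmx)),\, \bmy-\bmx\rangle$ via the chain rule. The FTC then yields
\[
f(\bmy) - f(\bmx) \;=\; \int_0^1 \langle \nabla f(\bmx + t(\bmy-\bmx)),\, \bmy-\bmx\rangle \, dt.
\]
Subtracting the trivial identity $\langle \nabla f(\bmx),\, \bmy-\bmx\rangle = \int_0^1 \langle \nabla f(\bmx),\, \bmy-\bmx\rangle \, dt$ gives
\[
f(\bmy) - f(\bmx) - \langle \nabla f(\bmx),\, \bmy-\bmx\rangle \;=\; \int_0^1 \langle \nabla f(\bmx + t(\bmy-\bmx)) - \nabla f(\bmx),\, \bmy-\bmx\rangle \, dt.
\]

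Next I would estimate the integrand via Cauchy--Schwarz followed by the $L$-Lipschitz bound on $\nabla f$:
\[
\bigl|\langle \nabla f(\bmx + t(\bmy-\bmx)) - \nabla f(\bmx),\, \bmy-\bmx\rangle\bigr| \;\leq\; \|\nabla f(\bmx + t(\bmy-\bmx)) - \nabla f(\bmx)\|\,\|\bmy-\bmx\| \;\leq\; L\,t\,\|\bmy-\bmx\|^2,
\]
where the second inequality uses that the two arguments of $\nabla f$ differ by $t(\bmy-\bmx)$. Pulling the absolute value inside the integral and evaluating $\int_0^1 t\,dt = \tfrac12$ yields the claimed bound
\[
\bigl|f(\bmy) - f(\bmx) - \langle \nabla f(\bmx),\, \bmy-\bmx\rangle\bigr| \;\leq\; \int_0^1 L\, t\, \|\bmy-\bmx\|^2 \, dt \;=\; \frac{L}{2}\|\bmy-\bmx\|^2.
\]

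Honestly there is no real obstacle here: this is a standard textbook computation and every step is an equality or a one-line inequality. The only minor care required is in justifying the chain-rule differentiation of $g$, which follows immediately from $f \in C^1$ (a consequence of $\nabla f$ being Lipschitz and hence continuous).
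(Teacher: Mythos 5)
Your proof is correct: the FTC-along-the-segment argument with Cauchy--Schwarz and the Lipschitz bound on $\nabla f$ is the standard derivation, and every step is justified (including the chain rule, since $\nabla f$ Lipschitz implies $f\in C^1$). The paper itself states this proposition without proof as a well-known consequence of $L$-smoothness, so there is nothing to compare against beyond noting that yours is the canonical argument.
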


\begin{defn}
We say that $f:\mathbb{R}^d\to\mathbb{R}$ is convex if for all $\bmx,\bmy\in\mathbb{R}^d$ with $\bmx\not=\bmy$ and for all $t\in(0,1)$ we have
\s
    f \big( t\bmx+(1-t)\bmy \big) \leq tf(\bmx)+(1-t)f(\bmy).
\f
If this inequality is strict, then we say $f$ is strictly convex.
\end{defn}

Since we typically assume that $f$ is $L$-smooth, it is differentiable. When $f$ is differentiable, the following equivalent definition is used.

\begin{prop}
\label{prop:altdeffconvex}
If $f:\mathbb{R}^d\to\mathbb{R}$ is differentiable and convex, then for all $\bmx,\bmy\in\mathbb{R}^d$
\s
    f(\bmy)-f(\bmx)\geq\langle\nabla f(\bmx),\bmy-\bmx\rangle.
\f
\end{prop}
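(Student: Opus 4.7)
The plan is to derive the claimed first-order inequality directly from the definition of convexity by forming a difference quotient along the segment from $\bmx$ to $\bmy$ and passing to the limit. The key observation is that convexity applied at the point $t\bmx + (1-t)\bmy$ can be rewritten so that the left-hand side becomes precisely a finite-difference approximation of the directional derivative of $f$ at $\bmx$ in the direction $\bmy - \bmx$.

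First I would fix $\bmx,\bmy\in\mathbb{R}^d$ with $\bmx\neq\bmy$ (the case $\bmx=\bmy$ is trivial since both sides vanish) and $t\in(0,1)$. Setting $s = 1-t \in (0,1)$, convexity gives
\[
    f\bigl(\bmx + s(\bmy-\bmx)\bigr) = f\bigl((1-s)\bmx + s\bmy\bigr) \leq (1-s)f(\bmx) + s f(\bmy),
\]
which I would rearrange and divide by $s>0$ to obtain
\[
    \frac{f\bigl(\bmx + s(\bmy-\bmx)\bigr) - f(\bmx)}{s} \leq f(\bmy) - f(\bmx).
\]

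Next I would take the limit as $s \to 0^+$. Because $f$ is differentiable at $\bmx$, the left-hand side converges to the directional derivative $\langle \nabla f(\bmx), \bmy - \bmx\rangle$, since by the definition of the gradient
\[
    f\bigl(\bmx + s(\bmy-\bmx)\bigr) - f(\bmx) = s\,\langle \nabla f(\bmx),\bmy-\bmx\rangle + o(s)
\]
as $s \to 0^+$. Dividing by $s$ and sending $s\to 0^+$ therefore yields
\[
    \langle \nabla f(\bmx), \bmy-\bmx\rangle \leq f(\bmy) - f(\bmx),
\]
which is the desired inequality.

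There is no real obstacle here; the only subtle point is to ensure that the one-sided limit of the difference quotient genuinely equals $\langle \nabla f(\bmx),\bmy-\bmx\rangle$, which follows immediately from differentiability. The argument is a standard derivation and uses nothing beyond the definition of convexity and the definition of the gradient.
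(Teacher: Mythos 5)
Your proof is correct. The paper states Proposition~\ref{prop:altdeffconvex} as a standard, well-known characterization of convexity for differentiable functions and does not include a proof of it in Appendix~\ref{app:notation}; your argument --- applying the definition of convexity along the segment from $\bmx$ to $\bmy$, dividing by $s$, and letting $s\to 0^+$ so that the difference quotient converges to $\langle\nabla f(\bmx),\bmy-\bmx\rangle$ --- is exactly the standard derivation the authors implicitly rely on, and it is complete.
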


The remainder of this section will outline some new theoretical estimates that are required to prove our main Theorems, given by Theorem \ref{thm:GSmoothGD_convex} and \ref{thm:GSmoothGD_non-convex}. Detailed proofs of theoretical results described thorough this section are given in Appendix \ref{app:notation}. 
The first result shows that smoothing essentially maintains some of the key properties of the function.
First, smoothing $f$ maintains its convexity and Lipschitz constant (these are found in~\cite{NesterovSpokoiny15})  and preserves its smoothness in part.
Second, smoothing a Lipschitz function induces the derivative of the smoothed function to be Lipschitz as well
(these two results can also be found in~\cite{NesterovSpokoiny15}).
%

\begin{lem}\label{lem:fsigmalsmoothandconvex}
\cite{NesterovSpokoiny15} Let $f:\mathbb{R}^d\to\mathbb{R}$ and $\sigma> 0$.
\begin{enumerate}[(a)]
    \item  If $f$ is $L$-smooth then $f_{\sigma}$ is also $L$-smooth.
    \item  If $f$ is convex then so is $f_{\sigma}$.
    \item If $f$ is $M$-Lipschitz then $f_{\sigma}$ is $M$-Lipschitz and $\frac{M\sqrt{2d}}{\sigma}$-smooth.
\end{enumerate}
\end{lem}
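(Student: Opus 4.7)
The plan is to leverage the two equivalent representations of $f_\sigma$ and $\nabla f_\sigma$ in \eqref{eq:f_sigma} and \eqref{eq:fs_grad}, passing each analytic property of $f$ through the Gaussian integral. Differentiation under the integral sign is justified throughout by the rapid decay of $e^{-\|\bmu\|_2^2}$ together with the Lipschitz-type hypotheses on $f$.

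For part (a), I would start from the identity $\nabla f_\sigma(\bmx) = \mathbb{E}_{\bmu \sim \mathcal{N}(0,\mathbb{I}_d)}[\nabla f(\bmx + \sigma\sqrt{2}\,\bmu)]$, obtained by interchanging gradient and expectation. Subtracting the analogous identity at $\bmy$ and using $\|\mathbb{E}[X]\|\le\mathbb{E}\|X\|$ reduces the claim to the pointwise bound $\|\nabla f(\bmx+\sigma\sqrt{2}\bmu)-\nabla f(\bmy+\sigma\sqrt{2}\bmu)\|\le L\|\bmx-\bmy\|$ from the $L$-smoothness of $f$, which integrates trivially since it is independent of $\bmu$. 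For part (b), the strategy is to apply the defining convexity inequality to $f$ pointwise inside the expectation: for $t\in(0,1)$ and any fixed $\bmu$, $f(t(\bmx+\sigma\sqrt{2}\bmu)+(1-t)(\bmy+\sigma\sqrt{2}\bmu))\le tf(\bmx+\sigma\sqrt{2}\bmu)+(1-t)f(\bmy+\sigma\sqrt{2}\bmu)$. Since the shift $\sigma\sqrt{2}\bmu$ is common to both convex-combination arguments, taking expectations immediately yields $f_\sigma(t\bmx+(1-t)\bmy)\le tf_\sigma(\bmx)+(1-t)f_\sigma(\bmy)$.

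For part (c), the $M$-Lipschitz claim is immediate: bringing the absolute value inside $|\mathbb{E}_{\bmu}[f(\bmx+\sigma\sqrt{2}\bmu)-f(\bmy+\sigma\sqrt{2}\bmu)]|$ and applying the Lipschitz hypothesis pointwise gives $|f_\sigma(\bmx)-f_\sigma(\bmy)|\le M\|\bmx-\bmy\|$. The smoothness bound is the most delicate step; here one must use the first representation in \eqref{eq:fs_grad}, which does not require $f$ to be differentiable. The plan is to write
\[
\nabla f_\sigma(\bmx)-\nabla f_\sigma(\bmy)
=\frac{2}{\sigma\pi^{d/2}}\int_{\mathbb{R}^d}\bmu\bigl[f(\bmx+\sigma\bmu)-f(\bmy+\sigma\bmu)\bigr]e^{-\|\bmu\|_2^2}\,d\bmu,
\]
move the norm inside the integral, apply $|f(\bmx+\sigma\bmu)-f(\bmy+\sigma\bmu)|\le M\|\bmx-\bmy\|$, and reduce to evaluating $\tfrac{1}{\pi^{d/2}}\int\|\bmu\|_2\,e^{-\|\bmu\|_2^2}\,d\bmu$. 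Converting this to a standard Gaussian expectation via $\bmv=\sqrt{2}\,\bmu$ gives $\tfrac{1}{\sqrt{2}}\mathbb{E}_{\bmv\sim\mathcal{N}(0,\mathbb{I}_d)}\|\bmv\|_2$, which by Jensen's inequality is at most $\tfrac{1}{\sqrt{2}}\sqrt{\mathbb{E}\|\bmv\|_2^2}=\sqrt{d/2}$, producing the stated constant $\tfrac{M\sqrt{2d}}{\sigma}$.

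The main obstacle I anticipate is purely bookkeeping: the change of variables between the $e^{-\|\bmu\|_2^2}$ integral representation and the $\mathcal{N}(0,\mathbb{I}_d)$ expectation introduces a factor of $\sqrt{2}$ that must cancel correctly in order to arrive at the constant $\sqrt{2d}$ rather than $2\sqrt{d}$. Apart from this and the standard justification of differentiating under the integral sign, each part of the lemma reduces to a one-line application of a property of $f$ inside an expectation.
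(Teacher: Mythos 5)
Your proposal is correct and follows essentially the same route as the paper: parts (a), (b), and the Lipschitz half of (c) are exactly the standard pointwise-inside-the-expectation arguments that the paper defers to \cite{NesterovSpokoiny15}, and your computation for the $\frac{M\sqrt{2d}}{\sigma}$-smoothness in (c) — moving the norm into the integral representation of $\nabla f_\sigma$, applying the Lipschitz bound, and estimating $\mathbb{E}\|\bmu\|$ by $\sqrt{\mathbb{E}\|\bmu\|^2}$ — is precisely the paper's own proof (the paper justifies the differentiation under the integral via its Lemma~\ref{lem:gradient_goes_into_convolution}, matching your appeal to Gaussian decay plus the Lipschitz hypotheses). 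The only cosmetic difference is that your Jensen step correctly yields an inequality where the paper's final line is written as an equality.
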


\begin{rem}
As discussed in the introduction, in practice, we do not need to have access to the gradient of $f$ and instead rely on function evaluations to compute the derivative of $f_{\sigma}$.
In fact, even for the theory to hold we do not always have to assume that the target function $f$ is differentiable everywhere because we apply results like Proposition~\ref{prop:altdefflsmooth} and~\ref{prop:altdeffconvex} on $f_{\sigma}$ which will be differentiable even if $f$ is only differentiable a.e. (for $\sigma>0$).
For example, if $f$ is Lipschitz then it is automatically differentiable a.e. or if $f$ is only $L$-smooth a.e. then $f_{\sigma}$ is $L$-smooth for $\sigma>0$ (the set of measure 0 where $f$ is non-differentiable does not impact the proof of Lemma~\ref{lem:fsigmalsmoothandconvex}).
\end{rem}
Since we are using smoothing to optimize a function, we need to know how the values of the smoothing function relate to the values of the original function. The following result shows that smoothing the function increases the minimum values and, for convex functions, the output unless $f$ is constant.
While these are standard results for the heat equation (e.g., see Theorem 6 of Section 2.3 in~\cite{evans2010partial} or Theorem 3.1 of~\cite{iwakiri2022single}), we include a direct proof without appealing to the heat equation since (\ref{eq:fs_grad}) may not be satisfied\footnote{Although, the heat equation is satisfied for $L$-smooth or $M$-Lipschitz functions (see discussion after proof of Lemma~\ref{lem:gradient_goes_into_convolution}).}.
\begin{lem}\label{lem:fsigmagreaterthanf}
Let $f:\mathbb{R}^d\to\mathbb{R}$ and $\sigma > 0$.
\begin{enumerate}[(a)]
    \item Suppose $f_{\sigma}(\bmx)=\inf_{\bmy\in\mathbb{R}^d}f(\bmy)>-\infty$ for some $\bmx\in\mathbb{R}^d$, then $f$ is constant.
    \item If $f$ is non-constant and either $f(\bmx)\geq m$ or $f(\bmx)\leq M$, then $f_{\sigma}(\bmx)> m$ or $f_{\sigma}(\bmx)< M$ (respectively).
    \item If $f$ is convex, then $f_{\sigma}(\bmx)\geq f(\bmx)$ whenever $f$ is differentiable at $\bmx\in\mathbb{R}^d$. (\cite[Eqn. (11)]{NesterovSpokoiny15})
\end{enumerate}
\end{lem}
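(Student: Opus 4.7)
The plan is to handle each of the three parts by a direct integration argument against the (strictly positive) Gaussian weight, reusing the same mechanism in parts (a) and (b), and invoking the subgradient inequality for convex functions in part (c). First, for part (a), set $m := \inf_{\bmy \in \mathbb{R}^d} f(\bmy)$ and rewrite the hypothesis as
\begin{equation*}
0 \;=\; f_{\sigma}(\bmx) - m \;=\; \frac{1}{\pi^{d/2}} \int_{\mathbb{R}^d} \bigl(f(\bmx + \sigma \bmu) - m\bigr)\, e^{-\|\bmu\|_2^2}\, d\bmu.
\end{equation*}
The integrand is nonnegative by definition of $m$, and the Gaussian weight is strictly positive on all of $\mathbb{R}^d$, so the integrand must vanish almost everywhere. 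After the change of variables $\bmy = \bmx + \sigma \bmu$, this forces $f(\bmy) = m$ for a.e.\ $\bmy \in \mathbb{R}^d$, so $f$ is (almost everywhere) constant.

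Part (b) would reuse this same mechanism. Interpreting the hypothesis as $f \geq m$ everywhere (respectively $f \leq M$ everywhere), the weak inequality $f_\sigma(\bmx) \geq m$ (resp. $f_\sigma(\bmx) \leq M$) is immediate from monotonicity of the integral against the positive Gaussian weight. If equality held at some $\bmx$, the argument of part (a) would force $f \equiv m$ a.e.\ (resp. $f \equiv M$ a.e.), contradicting the assumption that $f$ is non-constant; the upper-bound case is symmetric, and can alternatively be obtained by applying the lower-bound result to $-f$.

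Part (c) follows from the first-order characterization of convex functions at points of differentiability. Even without $f$ being differentiable everywhere, convexity together with differentiability at $\bmx$ imply the global subgradient inequality $f(\bmy) \geq f(\bmx) + \langle \nabla f(\bmx),\, \bmy - \bmx\rangle$ for every $\bmy \in \mathbb{R}^d$ (a pointwise limiting version of Proposition~\ref{prop:altdeffconvex}, obtained by letting $t \to 0^+$ in the secant inequality). Substituting $\bmy = \bmx + \sigma\sqrt{2}\bmu$, taking expectation with respect to $\bmu \sim \mathcal{N}(0, \mathbb{I}_d)$, and using $\mathbb{E}[\bmu] = 0$ would then yield
\begin{equation*}
f_\sigma(\bmx) \;=\; \mathbb{E}\bigl[f(\bmx + \sigma\sqrt{2}\bmu)\bigr] \;\geq\; f(\bmx) + \sigma\sqrt{2}\, \bigl\langle \nabla f(\bmx),\, \mathbb{E}[\bmu] \bigr\rangle \;=\; f(\bmx).
\end{equation*}

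The main obstacle is not technical depth but an interpretive subtlety: in parts (a) and (b) the phrase ``$f$ is constant'' really means ``constant almost everywhere'' unless additional regularity (e.g., continuity) is imposed, since the integration-against-a-positive-kernel argument only produces a.e.\ equality. This is the only point requiring any care; the three claims otherwise reduce to elementary properties of the strictly positive Gaussian kernel and the convex subgradient inequality, and no appeal to the heat equation (as flagged in the paragraph preceding the lemma) is needed.
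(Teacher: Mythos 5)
Your proof is correct and follows essentially the same route as the paper's: parts (a) and (b) are handled by integrating the nonnegative function $f-m$ against the strictly positive Gaussian kernel, and part (c) by the first-order convexity inequality combined with $\mathbb{E}[\bmu]=0$ (which the paper simply cites from Nesterov--Spokoiny rather than proving). If anything your write-up is slightly more complete: the paper's proof of (b) only records the weak inequality and leaves the strictness implicit in the mechanism of (a), and your observation that the kernel argument by itself only yields constancy almost everywhere is a fair point the paper glosses over (the paper's ``for all $\bmu$'' conclusion in (a) tacitly uses continuity of $f$).
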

Intuitively, smoothing an already smoothed function is the same as smoothing the original function by some amount; the following result explicitly shows this amount.
\begin{lem}\label{lem:chainofsmoothing}
Let $f:\mathbb{R}^d\to\mathbb{R}$ and $\sigma,\tau>0$. Define 
$    
    \eta = \sqrt{\sigma^2+\tau^2},
$
then
$
    (f_{\sigma})_{\tau}(\bmx) = f_{\eta}(\bmx)
$
where $(f_{\sigma})_{\tau}$ is the smoothing function of $f_{\sigma}$ with smoothing parameter $\tau$.
\end{lem}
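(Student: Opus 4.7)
The statement is the semigroup property of Gaussian convolution, so the plan is to reduce everything to a single linear change of variables in a double integral. The expectation form \eqref{eq:f_sigma} makes the result essentially obvious: if $u,v \sim \mathcal{N}(0,\mathbb{I}_d)$ are independent, then $\sigma\sqrt{2}\,u + \tau\sqrt{2}\,v$ is distributed as $\eta\sqrt{2}\,w$ with $w \sim \mathcal{N}(0,\mathbb{I}_d)$, so that $(f_\sigma)_\tau(\bmx) = \mathbb{E}_{u,v}[f(\bmx+\sigma\sqrt{2}\,u+\tau\sqrt{2}\,v)] = \mathbb{E}_w[f(\bmx+\eta\sqrt{2}\,w)] = f_\eta(\bmx)$. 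My preference, however, is to give the proof as an explicit change of variables to keep it self-contained and avoid leaning on measure-theoretic facts that the paper has not stated.

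First, I would apply the definition in \eqref{eq:f_sigma} twice and invoke Fubini to write $(f_\sigma)_\tau(\bmx) = \pi^{-d}\iint f(\bmx+\sigma\bmu+\tau\bmv)\,e^{-\|\bmu\|_2^2-\|\bmv\|_2^2}\,\mathrm{d}\bmu\,\mathrm{d}\bmv$. Fubini is justified provided $f$ grows slowly enough (e.g.\ polynomially) against the Gaussian weight, which is implicitly assumed throughout the paper for $f_\sigma$ to be defined.

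Next, I would carry out the orthogonal change of variables $\bmw = (\sigma\bmu+\tau\bmv)/\eta$, $\bmz = (\tau\bmu-\sigma\bmv)/\eta$, which acts coordinate-wise as the $2\times 2$ matrix $\eta^{-1}\bigl(\begin{smallmatrix}\sigma & \tau\\ \tau & -\sigma\end{smallmatrix}\bigr)$. A direct check shows this matrix is orthogonal (its determinant is $-1$), so the Jacobian has absolute value $1$, the Euclidean norm is preserved, $\|\bmu\|_2^2+\|\bmv\|_2^2 = \|\bmw\|_2^2+\|\bmz\|_2^2$, and the shifted argument becomes $\bmx+\sigma\bmu+\tau\bmv = \bmx+\eta\bmw$. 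The integrand is independent of $\bmz$, so that variable integrates out against $\pi^{-d/2}e^{-\|\bmz\|_2^2}\,\mathrm{d}\bmz = 1$, leaving exactly $\pi^{-d/2}\int f(\bmx+\eta\bmw)e^{-\|\bmw\|_2^2}\,\mathrm{d}\bmw = f_\eta(\bmx)$.

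There is no real obstacle here; the one subtlety is verifying that the $2\times 2$ change of variables can be lifted coordinate-wise to a genuine orthogonal transformation on $\mathbb{R}^{2d}$, and that the resulting Jacobian is indeed $1$. This is a standard tensor-product argument (the block-wise extension of an orthogonal $2\times 2$ matrix to act on paired coordinates $(\bmu,\bmv)$ is itself orthogonal on $\mathbb{R}^{2d}$), and once it is in hand, the two Gaussian factors decouple cleanly into $\bmw$ and $\bmz$ and the claim follows immediately.
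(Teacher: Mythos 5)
Your proof is correct and follows essentially the same route as the paper: the paper also writes $(f_\sigma)_\tau$ as a double integral and applies the change of variables $\bms=\sigma\bmu+\tau\bmv$, $\bmt=\tau\bmu-\sigma\bmv$, which is exactly your transformation before normalizing by $\eta$ (the paper picks up the Jacobian factor $\eta^{2d}$ and rescales at the end, whereas you absorb it up front by making the map orthogonal). The only cosmetic difference is that the paper justifies the manipulation by noting $f$ is bounded below rather than by a growth condition.
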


\begin{rem}
Assuming that $f_{\sigma}$ satisfies the heat equation~\ref{eqn:heateqn}\footnote{For example, this would occur if $f$ is $L$-smooth (see Remark~\ref{rem:l-smooth_functions_satisfy_heat_eqn}) or decays fast enough (see~\cite[Chapter 8 Exercise 44]{folland2009fourier}).}, then instead of the self-contained proof in the appendix, we could just appeal to the ``heat semigroup'' as follows:
Define
\begin{equation}
    (P_tf)(\bmx)=u(\bmx,t),
\end{equation}
where $u$ is the solution to the heat equation with initial value $f(\bmx)$, then
\begin{equation}
    P_{t_1}\circ P_{t_2} = P_{t_1+t_2}.
\end{equation}
Then in our case (as $\sigma = 2\sqrt{t}$), for some $\sigma_1,\sigma_2,\sigma\geq 0$, we have
\begin{equation}
    \frac{\sigma^2}{4}=t=t_1+t_2=\frac{\sigma_1^2}{4}+\frac{\sigma_2^2}{4}
    \Longrightarrow
    \sigma = \sqrt{\sigma_1^2+\sigma_2^2}.
\end{equation}
\end{rem}

Our results rely on careful analysis of how much the smoothing changes the output. We saw above how this relationship unfolds at the minimum, but we need to more exactly how much the smoothed function can differ from the original function.

\begin{lem}
\label{lem:differentsmoothingvalues}
Let $\tau\geq\sigma\geq 0$.
\begin{enumerate}[(a)]
    \item If $f$ is $L$-smooth, then
        $|f_{\tau}(\bmx)-f_{\sigma}(\bmx)|\leq(\tau^2-\sigma^2)\nicefrac{Ld}{4}$.
    \item If $f$ is $M$-Lipschitz, then
        $|f_{\tau}(\bmx)-f_{\sigma}(\bmx)|\leq M|\tau-\sigma|\sqrt{\nicefrac{d}{2}}$.
\end{enumerate}
\end{lem}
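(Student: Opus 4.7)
The plan is to handle the two cases with different strategies: part (a) via the semigroup property established in Lemma~\ref{lem:chainofsmoothing}, and part (b) via a direct coupling of the two Gaussian integrals against a common sample. The reason for treating them differently is that a naive semigroup argument applied to part (b) would only produce a bound of order $\sqrt{\tau^2-\sigma^2}$, which is strictly weaker than the sharper $|\tau-\sigma|$ asserted in the lemma.

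For part (a), I would set $\eta = \sqrt{\tau^2-\sigma^2}$ and invoke Lemma~\ref{lem:chainofsmoothing} to rewrite $f_\tau = (f_\sigma)_\eta$, so that
$$ f_\tau(\bmx) - f_\sigma(\bmx) = \frac{1}{\pi^{d/2}} \int_{\mathbb{R}^d} \bigl[f_\sigma(\bmx+\eta\bmu) - f_\sigma(\bmx)\bigr]\, e^{-\|\bmu\|_2^2}\, d\bmu. $$
Since $f_\sigma$ inherits $L$-smoothness from $f$ by Lemma~\ref{lem:fsigmalsmoothandconvex}(a), Proposition~\ref{prop:altdefflsmooth} applied to $f_\sigma$ yields
$$ \bigl| f_\sigma(\bmx+\eta\bmu) - f_\sigma(\bmx) - \langle \nabla f_\sigma(\bmx),\, \eta\bmu\rangle \bigr| \leq \frac{L\eta^2}{2}\|\bmu\|_2^2. $$
The linear term integrates to zero against the symmetric Gaussian weight, and a direct calculation gives $\frac{1}{\pi^{d/2}}\int \|\bmu\|_2^2\, e^{-\|\bmu\|_2^2}\, d\bmu = d/2$ (each coordinate contributes variance $1/2$ under this kernel), so one arrives at exactly $|f_\tau(\bmx)-f_\sigma(\bmx)|\leq (\tau^2-\sigma^2)Ld/4$.

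For part (b) I would bypass the semigroup identity and instead write both smoothings against the \emph{same} $\bmu$,
$$ f_\tau(\bmx) - f_\sigma(\bmx) = \frac{1}{\pi^{d/2}} \int_{\mathbb{R}^d} \bigl[f(\bmx+\tau\bmu) - f(\bmx+\sigma\bmu)\bigr]\, e^{-\|\bmu\|_2^2}\, d\bmu. $$
Because the shift vectors differ by $(\tau-\sigma)\bmu$, the $M$-Lipschitz property of $f$ itself gives the pointwise estimate $|f(\bmx+\tau\bmu) - f(\bmx+\sigma\bmu)| \leq M|\tau-\sigma|\|\bmu\|_2$. Taking absolute values inside the integral and applying Jensen's inequality to $\|\bmu\|_2 \leq \sqrt{\|\bmu\|_2^2}$, together with $\mathbb{E}\|\bmu\|_2^2 = d/2$ computed above, delivers $|f_\tau(\bmx)-f_\sigma(\bmx)| \leq M|\tau-\sigma|\sqrt{d/2}$.

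The only real obstacle is the temptation to use the same blueprint for both parts: applying Lemma~\ref{lem:chainofsmoothing} together with the $M$-Lipschitz property of $f_\sigma$ from Lemma~\ref{lem:fsigmalsmoothandconvex}(c) would produce a bound scaling as $\eta = \sqrt{\tau^2-\sigma^2}\geq |\tau-\sigma|$, which is not what is claimed. The improvement comes precisely from coupling the two smoothings through a shared Gaussian direction and using the Lipschitz constant of the unsmoothed $f$, so that the factor $(\tau-\sigma)$ appears linearly rather than through the semigroup parameter.
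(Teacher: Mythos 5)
Your proof is correct and follows essentially the same route as the paper: part (a) via the semigroup identity of Lemma~\ref{lem:chainofsmoothing} reducing to the comparison of $f_\sigma$ with $f$ (inserting the gradient term, which integrates to zero, and using $\frac{1}{\pi^{d/2}}\int\|\bmu\|^2e^{-\|\bmu\|^2}\,d\bmu=\nicefrac{d}{2}$), and part (b) by coupling the two smoothings against a common Gaussian sample and applying the Lipschitz bound directly. Your closing remark about why the semigroup route would lose sharpness in part (b) is a nice observation the paper does not make explicit, but the substance of the argument is identical.
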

If $f$ is convex and we do not specify that $\sigma\leq\tau$, the first part of the previous lemma becomes
\begin{equation}
    f_{\tau}(\bmx)-f_{\sigma}(\bmx)\leq\frac{Ld}{4}\max(0,\tau^2-\sigma^2),
\end{equation}
which is often the form in which we use the result.
Also, note that this bound is optimal; if $f(\bmx)=\|\bmx\|^2$, then $f_{\sigma}(\bmx)=f(\bmx)+\frac{\sigma^2Ld}{4}$ which shows that the bound cannot be improved in general.

While this result appears innane, it turns out to have some important implications.
First, since the bound is independent of $\bmx$, this shows that $f_{\sigma}\to f$ uniformly as $\sigma\to 0$.
Second, it also means that $f_{\sigma}(\bmx)$ exists (and is finite) for all $\bmx$ and $\sigma>0$.
Next, as mentioned before, this result shows that~\cite{iwakiri2022single} requires $f$ be bounded.
Assumption A1 (i) in~\cite{iwakiri2022single} states that for all $\sigma>0$
\begin{equation}
    \sup_{x\in\mathbb{R}^d}E_u(|f(x+\sigma u)|)<\infty.
\end{equation}
This means that
\begin{align}
    \sup_{x\in\mathbb{R}^d} |f_{\sigma}(x)|
    &=\sup_{x\in\mathbb{R}^d} |E_u(f(x+\sigma u))|
    \leq \sup_{x\in\mathbb{R}^d} E_u(|f(x+\sigma u)|)
    <\infty.
\end{align}
Applying the previous result for any fixed $\sigma>0$, we have
\begin{align}
    \sup_{x\in\mathbb{R}^d}|f(x)|
    &\leq \sup_{x\in\mathbb{R}^d} |f_{\sigma}(x)|+\frac{Ld\sigma^2}{4}
    <\infty.
\end{align}
So, $f$ is bounded, meaning that even well-behaved functions like $\|x\|^2$ are outside of the scope of the analysis of~\cite{iwakiri2022single}.

%
One last result relating the smoothed function's output to the original's output is needed.
So far, we know that for a particular $\bmx$-value, the change in output values is bounded by a function of $\sigma$.
The same is true when comparing the minimum of the smoothed function to the minimum of the original function.
This also means if we fix $\sigma$ and minimize $f_{\sigma}$, then we know we are within some distance of the minimum of the original function and that distance gets smaller as $\sigma$ does.
\begin{cor}\label{cor:fminusfsigma}
Let $f:\mathbb{R}^d\to\mathbb{R}$ achieve its minimum and $\sigma\geq 0$. Let $\xstar$ and $\sstar$ be any minimizers of $f$ and $f_{\sigma}$, respectively.
\begin{enumerate}[(a)]
    \item If $f$ is $L$-smooth, then $0\leq f_{\sigma}(\sstar)-f(\xstar)\leq\frac{1}{4}\sigma^2Ld$.
    \item If $f$ is $M$-Lipschitz, then 
        $0\leq f_{\sigma}(\sstar)-f(\xstar)\leq M\sigma\sqrt{\nicefrac{d}{2}}$.
\end{enumerate}
\end{cor}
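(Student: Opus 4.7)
The plan is to prove each bound (lower and upper) separately, both of which follow quickly from results already established. The lower bound $0 \leq f_{\sigma}(\sstar)-f(\xstar)$ is really a statement that smoothing cannot decrease the global minimum. Since $\xstar$ minimizes $f$, we have $f(\bmy) \geq f(\xstar)$ for all $\bmy \in \mathbb{R}^d$; taking expectations in the representation $f_\sigma(\bmx) = \mathbb{E}_{\bmu \sim \mathcal{N}(0,\mathbb{I}_d)}[f(\bmx + \sigma\sqrt{2}\bmu)]$ from \eqref{eq:f_sigma} immediately gives $f_\sigma(\bmx) \geq f(\xstar)$ for every $\bmx$. Specializing to $\bmx = \sstar$ yields the lower bound in both (a) and (b) simultaneously.

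For the upper bound, I would use the fact that $\sstar$ is a minimizer of $f_\sigma$ to write $f_\sigma(\sstar) \leq f_\sigma(\xstar)$, which reduces the problem to bounding $f_\sigma(\xstar) - f(\xstar)$. Noting that $f_0 \equiv f$ directly from \eqref{eq:f_sigma} (the Gaussian ``collapses'' to a point mass as $\sigma \to 0$), I can invoke Lemma \ref{lem:differentsmoothingvalues} with the pair $(\tau,\sigma) = (\sigma, 0)$: part (a) yields $f_\sigma(\xstar) - f(\xstar) \leq \sigma^2 Ld/4$ when $f$ is $L$-smooth, and part (b) yields $f_\sigma(\xstar) - f(\xstar) \leq M\sigma\sqrt{d/2}$ when $f$ is $M$-Lipschitz. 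Chaining these with $f_\sigma(\sstar) \leq f_\sigma(\xstar)$ delivers the upper bounds in (a) and (b).

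There is no real obstacle here, since this corollary is essentially a direct specialization of Lemma \ref{lem:differentsmoothingvalues} combined with the minimization inequalities. The only points to be careful about are: (i) checking that Lemma \ref{lem:differentsmoothingvalues} as stated indeed permits $\sigma = 0$ (it does, since the statement reads $\tau \geq \sigma \geq 0$); (ii) making sure the interchange of infimum and expectation for the lower bound is valid, which is trivial here because the inequality $f(\cdot) \geq f(\xstar)$ is pointwise and preserved under expectation; and (iii) noting that the existence of minimizers $\xstar$ and $\sstar$ is given by hypothesis, so we do not need to worry about attainment. The whole proof should fit in a few lines.
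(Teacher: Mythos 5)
Your proof is correct and follows essentially the same route as the paper: the upper bound comes from $f_\sigma(\sstar)\leq f_\sigma(\xstar)$ combined with Lemma~\ref{lem:differentsmoothingvalues}, and the lower bound is exactly the content of Lemma~\ref{lem:fsigmagreaterthanf} (which you re-derive inline from the expectation representation rather than citing). No gaps.
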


Intuitively, one reason we want to smooth this way is to make the function ``more convex''.
Unbounded, non-convex functions that have function values go to $\infty$ and $-\infty$ in certain directions, e.g., $f(x)=x^3$, can only become convex if flattened out entirely, which will not happen for any finite smoothing value.
Even for bounded, non-convex functions, the following corollary shows that it is not always possible to make the function convex.
In particular, if you have a bounded, non-convex function, then no amount of smoothing makes the function convex.
This result is a complement to the work of Mobahi (e.g., see \cite{mobahi2015theoretical}).

\begin{cor}\label{cor:corollarytofsigma0}
Let $f:\mathbb{R}^d\to\mathbb{R}$ so that $|f_{\sigma}(\bmx)|\leq Ae^{a\|x\|^2}$ for all $0\leq\sigma\leq S$ for some constants $A$ and $a$ and $f_{\sigma}$ satisfies the heat equation (\ref{eqn:heateqn}).
\begin{enumerate}[(a)]
    \item If $f_{\sigma}$ is constant for some $\sigma\leq S$, then $f_{\eta}=f$ for all $\eta>0$.
    \item If $g:\mathbb{R}^d\to\mathbb{R}$ satisfies $|g_{\sigma}(\bmx)|\leq Be^{b\|x\|^2}$ for all $0\leq\sigma\leq S$ for some constants $B$ and $b$ and $f_{\sigma}(\bmx) = g_{\sigma}(\bmx)$ for all $x\in\mathbb{R}^d$, then $f=g$ a.e.
    \item If there exists $S\geq\sigma>0$ such that $f_{\sigma}(\bmx)=f_{\tau}(\bmx)$ for all $\tau\geq\sigma$, then $f=f_{\eta}$ a.e. for all $\eta>0$.
    \item If $f$ is non-convex and bounded, then $f_{\sigma}$ is non-convex for all $\sigma\geq 0$.
\end{enumerate}
\end{cor}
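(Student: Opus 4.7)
My plan is to prove (b) first and then derive (c), (a), and (d) as successive consequences, since each later part reduces to the earlier ones via Lemma~\ref{lem:chainofsmoothing} or elementary observations. The heart of the argument, and the part I expect to be the main obstacle, is establishing the injectivity statement in (b) given only exponential--quadratic growth of the smoothed function.

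For (b), I will reduce to uniqueness of the heat equation in a Tychonoff-type growth class. Since $f_\tau$ and $g_\tau$ both solve the parameter-form heat equation (\ref{eqn:smoothing_heateqn}), the difference $h_\tau := f_\tau - g_\tau$ also satisfies the heat equation, obeys the combined growth bound $|h_\tau(\bmx)| \leq (A+B)\,e^{\max(a,b)\|\bmx\|_2^2}$ on $\tau \in [0,S]$, and vanishes identically at $\tau = \sigma$. Under such exponential-quadratic growth, the Widder--Tychonoff uniqueness theorem (together with the log-convexity backward-uniqueness result for linear parabolic equations) forces $h_\tau \equiv 0$ for every $\tau \in [0,S]$. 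Passing $\tau \to 0^+$ and using that $f_\tau \to f$ a.e.\ then yields $f = g$ a.e. The delicate point is confirming that the growth bound actually places $h$ inside the uniqueness class; the hypotheses of the corollary are tailored precisely for that.

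For (c), fix $\eta > 0$ and set $g := f_\eta$. By Lemma~\ref{lem:chainofsmoothing}, $g_\sigma = f_{\sqrt{\eta^2+\sigma^2}}$, and since $\sqrt{\eta^2+\sigma^2} \geq \sigma$, the hypothesis $f_\tau = f_\sigma$ for $\tau \geq \sigma$ gives $g_\sigma = f_\sigma$. A short case analysis shows $|g_\tau| = |f_{\sqrt{\eta^2+\tau^2}}|$ is still bounded by $A e^{a\|\bmx\|_2^2}$ for $\tau \in [0,S]$: either $\sqrt{\eta^2+\tau^2} \leq S$ and the original growth hypothesis applies, or $\sqrt{\eta^2+\tau^2} \geq \sigma$ and the function equals $f_\sigma$. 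Applying (b) to $(f, g)$ then yields $f = f_\eta$ a.e. For (a), if $f_\sigma \equiv c$, then Lemma~\ref{lem:chainofsmoothing} combined with the trivial observation that smoothing a constant returns the constant gives $f_\tau = (f_\sigma)_{\sqrt{\tau^2-\sigma^2}} \equiv c$ for all $\tau \geq \sigma$, which is exactly the hypothesis of (c); invoking (c) concludes.

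For (d), I will proceed by contrapositive. Suppose $f$ is bounded by $M$ and $f_\sigma$ is convex for some $\sigma > 0$. From the defining expectation~\eqref{eq:f_sigma}, $|f_\sigma(\bmx)| \leq M$, so $f_\sigma$ is a convex function on $\mathbb{R}^d$ that is bounded above, hence constant: along any line through two points $\bmx \neq \bmy$ with $f_\sigma(\bmx) \neq f_\sigma(\bmy)$, convexity would force $f_\sigma$ to grow without bound along the extension of that line, contradicting boundedness. Now invoke (a): $f = f_\eta$ a.e.\ for every $\eta > 0$, so in particular $f = f_\sigma$ is constant a.e.\ and therefore convex, contradicting the assumed non-convexity.
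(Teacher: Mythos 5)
Your proofs of (c) and (d) coincide with the paper's: for (c) you set $g=f_\eta$, use Lemma~\ref{lem:chainofsmoothing} to get $g_\sigma=f_{\sqrt{\eta^2+\sigma^2}}=f_\sigma$, and invoke (b); for (d) you note that a bounded convex function is constant and then apply (a) to contradict non-convexity. The differences are in (a) and (b), which the paper does not actually prove but dispatches with a citation to uniqueness for the heat equation (Evans, \S 2.3). You instead derive (a) from (c) via the observation that a constant $f_\sigma$ forces $f_\tau\equiv c$ for all $\tau\geq\sigma$, which is a clean reduction that avoids a second appeal to PDE machinery. For (b) you correctly identify the real crux, which the paper's citation glosses over: knowing $f_\sigma=g_\sigma$ at a single positive smoothing level and wanting to conclude $f=g$ is a \emph{backward} uniqueness statement (injectivity of the heat semigroup on the growth class $|u|\leq Ae^{a\|x\|^2}$), not the forward Tychonoff uniqueness that Evans \S 2.3 provides. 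Forward uniqueness only propagates the coincidence to $\tau\geq\sigma$; going down to $\tau=0$ requires the log-convexity/backward-uniqueness machinery you invoke, and verifying that $e^{a\|x\|^2}$ growth places the difference $h_\tau$ inside that uniqueness class is genuinely delicate (the standard log-convexity argument wants a weighted $L^2$ framework). So your write-up is at least as rigorous as the paper's on this point, but you should either supply a precise reference for backward uniqueness in this growth class or note, as the paper implicitly does, that for the functions actually used later ($L$-smooth or Lipschitz, hence of polynomial growth after subtracting a constant) one can argue injectivity of Gaussian convolution directly, e.g.\ via the Fourier transform on tempered distributions. Two small housekeeping points: your case analysis for the growth bound in (c) is complete only because $\sigma\leq S$ rules out the gap case, which is worth saying explicitly; and in (d) ``constant a.e.'' does not by itself contradict non-convexity unless you use continuity of $f$ to upgrade a.e.\ equality with the smooth function $f_\sigma$ to everywhere equality (the paper has the same wrinkle).
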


Note that (a) and (b) follow from the uniqueness of solutions to the heat equation (e.g., see Section 2.3 of \cite{evans2010partial}).

\begin{proof}[Proof of (c)]
Let $\eta>0$ and set $\tau = \sqrt{\sigma^2+\eta^2}$. Then since $\tau>\sigma$, using Lemma~\ref{lem:fsigmagreaterthanf},
\begin{align}
    (f_{\eta})_{\sigma}=f_{\tau}=f_{\sigma}.
\end{align}
Then by part (b), we have that $f_{\eta} = f$ a.e.
\end{proof}

\begin{proof}[Proof of (d)]
Since $f$ is bounded, $f_{\sigma}$ is bounded by Lemma~\ref{lem:fsigmagreaterthanf}.
If $f_{\sigma}$ were convex, then it would be constant.
By part (a), we would have that $f_{\sigma}=f=0$.
However, $f$ is non-convex, which means it cannot be constant.
As such, $f_{\sigma}$ must be non-convex.
\end{proof}

The last result in this section rigorously shows that, for the type of functions we consider in this paper. we can represent $\nabla f_{\sigma}$ as in (\ref{eq:fs_grad}). The proof uses the dominated convergence theorem, so we need the following result.

\begin{lem}
\label{lem:finl1}
Suppose $f_{\sigma}(\bmx)\in\mathbb{R}$ for all $\bmx\in\mathbb{R}^d$ and suppose that $f$ is bounded below. Then $f\in L^1(\mathbb{R}^d,k_{\sigma})$ where $k_{\sigma}(\bmx)=\frac{1}{\sigma^d}e^{-\frac{\|\bmx\|^2}{\sigma^2}}$.
\end{lem}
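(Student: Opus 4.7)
The plan is to exploit the fact that $f$ being bounded below keeps the negative part of $f$ under trivial control in the Gaussian integral, so that finiteness of $f_\sigma$ forces finiteness of the positive part as well. Once both parts are finite, a change of variables converts the Gaussian integral into exactly $\int |f| k_\sigma$.

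\textbf{Step 1 (Reduction via a lower bound).} Since $f$ is bounded below, pick $C \geq 0$ with $f(\bmy) \geq -C$ for all $\bmy$, so that $f_-(\bmy) := \max(-f(\bmy), 0) \leq C$ while $f_+(\bmy) := \max(f(\bmy), 0) \geq 0$.

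\textbf{Step 2 (Bound the negative part).} Evaluating at $\bmx = 0$, the definition \eqref{eq:f_sigma} gives
\begin{equation}
    \int_{\mathbb{R}^d} f_-(\sigma\bmu)\, e^{-\|\bmu\|^2}\, \mathrm{d}\bmu \;\leq\; C \int_{\mathbb{R}^d} e^{-\|\bmu\|^2}\, \mathrm{d}\bmu \;=\; C\pi^{d/2} \;<\; \infty.
\end{equation}

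\textbf{Step 3 (Use the hypothesis to control the positive part).} By hypothesis $f_\sigma(0) \in \mathbb{R}$, and writing $f = f_+ - f_-$ yields
\begin{equation}
    \pi^{d/2} f_\sigma(0) \;=\; \int_{\mathbb{R}^d} f_+(\sigma\bmu)\, e^{-\|\bmu\|^2}\, \mathrm{d}\bmu \;-\; \int_{\mathbb{R}^d} f_-(\sigma\bmu)\, e^{-\|\bmu\|^2}\, \mathrm{d}\bmu.
\end{equation}
Since the left-hand side is a finite real number and the second integral on the right is finite by Step 2, the first integral must also be finite. Adding the two finite integrals,
\begin{equation}
    \int_{\mathbb{R}^d} |f(\sigma\bmu)|\, e^{-\|\bmu\|^2}\, \mathrm{d}\bmu \;<\; \infty.
\end{equation}

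\textbf{Step 4 (Change of variables).} Substituting $\bmy = \sigma\bmu$ (so $\mathrm{d}\bmu = \sigma^{-d}\,\mathrm{d}\bmy$ and $\|\bmu\|^2 = \|\bmy\|^2/\sigma^2$) converts the previous display into
\begin{equation}
    \int_{\mathbb{R}^d} |f(\bmy)|\, \frac{1}{\sigma^d} e^{-\|\bmy\|^2/\sigma^2}\, \mathrm{d}\bmy \;=\; \int_{\mathbb{R}^d} |f(\bmy)|\, k_\sigma(\bmy)\, \mathrm{d}\bmy \;<\; \infty,
\end{equation}
which is precisely the statement $f \in L^1(\mathbb{R}^d, k_\sigma)$.

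There is no real obstacle in this argument; the only subtlety is that splitting $f_\sigma(0)$ as $\int f_+ - \int f_-$ requires one of the two integrals to be known finite a priori, which is exactly what the lower bound on $f$ provides. The whole proof is essentially a one-line consequence of the decomposition into positive and negative parts together with the translation-invariance of Lebesgue measure.
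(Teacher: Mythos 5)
Your proof is correct and follows essentially the same route as the paper's: both arguments use the lower bound to control the part of $|f|$ that is not already controlled by the finiteness of $f_\sigma$ (you via the decomposition $f=f_+-f_-$ with $f_-\leq C$, the paper via the pointwise bound $|f|\leq (f-m)+|m|$), and then conclude by integrating against the Gaussian. The explicit change of variables to the $k_\sigma$ form at the end is a cosmetic difference only.
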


If $f$ is bounded below and is either $L$-smooth or $M$-Lipschitz, then by Lemma~\ref{lem:differentsmoothingvalues}, we know that $f_{\sigma}(\bmx)\in\mathbb{R}$ for any $\bmx\in\mathbb{R}^d$. This means that $f\in L^1(\mathbb{R}^d,k_{\sigma})$. With this result in hand, we can show that the partial derivatives can pass into either argument of the convolution.

\begin{lem}
\label{lem:gradient_goes_into_convolution}
Let $f:\mathbb{R}^d\to\mathbb{R}$ with $f\in L^1(\mathbb{R}^d,k_{\sigma})$
Then $\nabla f_{\sigma}$ exists and for $i\in\{1,...,d\}$
\begin{equation}
    \frac{\partial f_{\sigma}}{\partial x_i}(\bmx)
    =\left(\frac{\partial f}{\partial x_i}\star k_{\sigma}\right)(\bmx)
    =\left(f\star \frac{\partial k_{\sigma}}{\partial x_i}\right)(\bmx).
\end{equation}
\end{lem}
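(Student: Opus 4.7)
The plan is to apply the dominated convergence theorem (DCT) to differentiate under the integral sign, taking advantage of the smoothness and fast decay of $k_\sigma$. First, I would rewrite $f_\sigma$ as a standard convolution $f_\sigma(\bmx) = (f \star k_\sigma)(\bmx) = \int_{\mathbb{R}^d} f(\bmy)\, k_\sigma(\bmx - \bmy)\, d\bmy$ (up to a dimensional constant), so that the $\bmx$-dependence lives entirely in the smooth, rapidly-decaying kernel and not in $f$.

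To establish the second equality $\partial_{x_i} f_\sigma = f \star \partial_{x_i} k_\sigma$, I would form the difference quotient in direction $\bm{e}_i$, invoke the mean value theorem on $t \mapsto k_\sigma(\bmx + t \bm{e}_i - \bmy)$, and bound the resulting integrand uniformly for $|h| \leq 1$. Because $\partial_{x_i} k_\sigma$ is a linear factor times a Gaussian and because a small shift only mildly widens the effective Gaussian (via an elementary estimate of the form $\|\bmx + \theta h \bm{e}_i - \bmy\|^2 \geq \tfrac{1}{2}\|\bmx - \bmy\|^2 - 2h^2$), a short calculation absorbs the linear factor into the exponential and produces a pointwise dominant of the form $C(\bmx,\sigma,d)\, |f(\bmy)|\, e^{-\|\bmx - \bmy\|^2/(3\sigma^2)}$. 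Interpreting the hypothesis $f \in L^1(\mathbb{R}^d, k_\sigma)$, via Lemma~\ref{lem:finl1}, as integrability of $f$ against Gaussian weights centered at any point, this dominant is integrable and DCT delivers the desired identity.

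The first equality $\partial_{x_i} f_\sigma = (\partial_{x_i} f) \star k_\sigma$ would then follow from the second via integration by parts in $y_i$, using the chain-rule identity $\partial_{y_i}[k_\sigma(\bmx - \bmy)] = -\partial_{x_i} k_\sigma(\bmx - \bmy)$; the boundary terms at infinity vanish thanks to the superpolynomial decay of the Gaussian against the $L^1(k_\sigma)$-integrable $f$. When $f$ is only differentiable almost everywhere, the identity is to be read weakly, in keeping with the paper's stance that $f$ need not be classically differentiable everywhere.

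The main obstacle is manufacturing the uniform integrable majorant in the DCT step. Since $e^{-\|\bmx + \theta h \bm{e}_i - \bmy\|^2/\sigma^2}$ cannot be pointwise bounded by a constant multiple of $e^{-\|\bmx - \bmy\|^2/\sigma^2}$, some slack in the Gaussian exponent is unavoidable — the majorant must be a Gaussian of slightly larger effective radius than $k_\sigma$ itself. This forces the integrability hypothesis to be strong enough to control $f$ against a whole family of Gaussian weights, not just $k_\sigma$ alone, which is precisely the content of Lemma~\ref{lem:finl1} (and which dovetails with the semigroup identity of Lemma~\ref{lem:chainofsmoothing}).
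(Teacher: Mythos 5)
Your proof of the second identity, $\partial_{x_i} f_\sigma = f\star\partial_{x_i}k_\sigma$, is essentially the paper's own argument: difference quotient, mean value theorem applied to the kernel, and a dominating function obtained by absorbing the linear factor into a Gaussian of slightly larger radius $\tau>\sigma$ so that $f\in L^1(\mathbb{R}^d,k_\tau)$ (via Lemma~\ref{lem:finl1}) licenses the dominated convergence theorem. Where you genuinely diverge is in the first identity. You pass from $f\star\partial_{x_i}k_\sigma$ to $(\partial_{x_i}f)\star k_\sigma$ by integrating by parts in $y_i$, whereas the paper instead returns to the difference quotient and uses translation invariance of Lebesgue measure --- the substitution $\bmu\mapsto\bmu+\tfrac{h}{\sigma}e_i$ --- to move the increment off the kernel and onto $f$, so that the limit of the shifted integrand is directly $\partial_{x_i}f(\bmx+\sigma\bmu)e^{-\|\bmu\|^2}$. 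The change-of-variables route needs no statement about boundary behavior; your integration-by-parts route does, and your justification of the vanishing boundary terms is the one soft spot: $f\in L^1(\mathbb{R}^d,k_\sigma)$ gives integrability of $y_i\mapsto f(\bmy)k_\sigma(\bmx-\bmy)$ along almost every line but not pointwise decay, so you should argue instead that this function is locally absolutely continuous with integrable derivative (which holds once $(\partial_{x_i}f)\star k_\sigma$ converges absolutely, as it does for the Lipschitz or $L$-smooth $f$ the paper actually works with), hence has limits at $\pm\infty$ that integrability then forces to be zero. With that patch your argument is sound, and it has the advantage of making transparent why the two convolution expressions agree as an adjointness relation; the paper's substitution trick is more elementary but leaves the final interchange of limit and integral for the $(\partial_{x_i}f)\star k_\sigma$ side essentially unjustified.
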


Note that the proof of this lemma can be adapted to show that if $f$ is bounded below and either $L$-smooth or $M$-Lipschitz, then $f_{\sigma}$ satisfies the heat equation.

\section{Gaussian smoothing gradient descent (GSmoothGD)}
\label{sec:GSmoothGD}

Our work is primarily concerned with a complete convergence 
analysis of Gaussian smoothing gradient descent (GSmoothGD) approaches
governed by the following update rule:
\begin{equation}
\label{eq:GSmoothGD}
    \bmx_{k+1}=\bmx_{k} - t\,\nabla f_{\sigma_{k+1}}(\bmx_{k}).
\end{equation}
Here the standard gradient $\nabla f(\bmx_k)$ is replaced with the Gaussian smoothed surrogate $\nabla f_{\sigma_{k+1}}(\bmx_{k})$, given by \eqref{eq:fs_grad},
with $(0,\bmx_k)$ and $(0,\bmx_{k+1})$ being possible solutions to \eqref{eq:surrogate_opt} at iterations $k$ and $k+1$, 
$t$ is the step size or learning rate, and $\sigma_{k}$ is the smoothing radius utilized during the $k$th step of \eqref{eq:GSmoothGD}.


\subsection{Convergence of GSmoothGD for high-dimensional convex functions}
\label{sec:convex}
We prove the following convergence estimate of GSmoothGD when applied to 
high-dimensional convex functions.  The detailed proof can be found in Section \ref{app:GSmoothGD_convex}.
\begin{thm}[Convergence of GSmoothGD for convex functions]
\label{thm:GSmoothGD_convex}
Suppose $f:\mathbb{R}^d\to\mathbb{R}$ is convex, $L$-smooth, and has at least one minimizer.
Let $(\sigma_n)_{n=1}^{\infty}$ be a sequence of positive real numbers. 
Then, after $k$ iterations of GSmoothGD, defined by \eqref{eq:GSmoothGD}, with a fixed step size $0< t\leq\frac{1}{L}$, the solution $f(\bmx_k)$ satisfies
\begin{equation}
\label{eq:f1}
    f(\bmx_{k})-f(\xstar)\leq
    \frac{\|\bmx_0-\xstar\|^2}{2tk}+\frac{Ld}{4k}\left(
    \sum_{i=1}^{k}\sigma_{i}^2
    +\sum_{i=2}^{k}i\max(0,\sigma_{i}^2-\sigma_{i-1}^2)
    \right),
\end{equation}
where $\xstar$ is any minimizer of $f$.
\end{thm}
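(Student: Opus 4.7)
The plan is to adapt the standard convergence proof for gradient descent on convex $L$-smooth functions, applying it at each step to the surrogate $f_{\sigma_{k+1}}$, and then to correct for (i) the gap between $f_{\sigma_i}(\xstar)$ and $f(\xstar)$ and (ii) the fact that the smoothing level changes from one iteration to the next. Since $f$ is convex and $L$-smooth, Lemma~\ref{lem:fsigmalsmoothandconvex} guarantees that every $f_{\sigma_{k+1}}$ is convex and $L$-smooth, so for the step $\bmx_k\to\bmx_{k+1}$ with $t\le 1/L$ I would combine the descent inequality from Proposition~\ref{prop:altdefflsmooth} (applied to $f_{\sigma_{k+1}}$) with the convexity inequality from Proposition~\ref{prop:altdeffconvex} at $\xstar$. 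Substituting $\nabla f_{\sigma_{k+1}}(\bmx_k)=(\bmx_k-\bmx_{k+1})/t$ and using the polarization identity yields
\begin{equation*}
    f_{\sigma_{k+1}}(\bmx_{k+1})-f_{\sigma_{k+1}}(\xstar)
    \le \frac{1}{2t}\bigl(\|\bmx_k-\xstar\|^2-\|\bmx_{k+1}-\xstar\|^2\bigr),
\end{equation*}
which is the usual one-step GD estimate, now for a different surrogate at each $k$.

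Summing this one-step bound for the steps producing $\bmx_1,\ldots,\bmx_k$ telescopes the right-hand side to $\|\bmx_0-\xstar\|^2/(2t)$. On the left-hand side I would bound $f_{\sigma_i}(\xstar)\le f(\xstar)+\sigma_i^2 Ld/4$ via Lemma~\ref{lem:differentsmoothingvalues}(a) (with $\sigma=0$, together with the convex refinement stated after the lemma), giving
\begin{equation*}
    \sum_{i=1}^k\bigl(f_{\sigma_i}(\bmx_i)-f(\xstar)\bigr)
    \le \frac{\|\bmx_0-\xstar\|^2}{2t}+\frac{Ld}{4}\sum_{i=1}^k\sigma_i^2.
\end{equation*}

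The main obstacle is to convert this average over the iterates into a bound on $f(\bmx_k)$ at the final iterate, despite $\sigma_k$ possibly not being monotone. First, Lemma~\ref{lem:fsigmagreaterthanf}(c) gives $f(\bmx_k)\le f_{\sigma_k}(\bmx_k)=:\phi_k$. Next, the descent inequality for $f_{\sigma_{j+1}}$ with $t\le 1/L$ yields $f_{\sigma_{j+1}}(\bmx_{j+1})\le f_{\sigma_{j+1}}(\bmx_j)$, and the convex form of Lemma~\ref{lem:differentsmoothingvalues}(a) provides $f_{\sigma_{j+1}}(\bmx_j)\le \phi_j+\frac{Ld}{4}\max(0,\sigma_{j+1}^2-\sigma_j^2)$, so that
\begin{equation*}
    \phi_k \le \phi_i + \frac{Ld}{4}\sum_{j=i+1}^{k}\max\bigl(0,\sigma_j^2-\sigma_{j-1}^2\bigr),
    \qquad 1\le i\le k.
\end{equation*}
Summing this over $i=1,\ldots,k$, swapping the order of summation to produce a factor of $(j-1)\le j$ on each $\max(0,\sigma_j^2-\sigma_{j-1}^2)$, combining with the previous display, and dividing by $k$ delivers \eqref{eq:f1}. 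The delicate bookkeeping is exactly the $\max(0,\cdot)$ truncation, which isolates the genuinely unfavorable iterations (those with $\sigma_j>\sigma_{j-1}$) where the surrogate can actually increase; iterations that decrease $\sigma$ contribute nothing to the second sum.
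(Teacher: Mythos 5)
Your proposal is correct and follows essentially the same route as the paper's proof: the per-step GD estimate for the surrogate $f_{\sigma_{k+1}}$, the telescoping sum, the $\frac{Ld\sigma_i^2}{4}$ correction for $f_{\sigma_i}(\xstar)-f(\xstar)$ via Lemma~\ref{lem:differentsmoothingvalues}, the quasi-monotonicity chain $\phi_k\le\phi_i+\frac{Ld}{4}\sum_{j=i+1}^{k}\max(0,\sigma_j^2-\sigma_{j-1}^2)$ to pass from the average to the last iterate, and finally $f(\bmx_k)\le f_{\sigma_k}(\bmx_k)$ from Lemma~\ref{lem:fsigmagreaterthanf}(c). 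The only difference is cosmetic ordering of when the corrections are applied, and your double-sum swap yielding the factor $(j-1)\le j$ matches the paper's bookkeeping exactly.
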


\begin{rem}[Constant smoothing radius]
\label{rem:GSmoothGD_convex}
If we further assume that $\sigma_n=\sigma >0$ for all $n$,
then for $t\leq \frac{1}{L}$, and with the use of Lemma~\ref{lem:differentsmoothingvalues} we get that
\begin{align*}
    \begin{split}
    0
    \leq f(\bmx_{k})-f(\xstar)
    \leq\frac{\|\bmx_0-\xstar\|^2}{2tk}+\frac{Ld\sigma^2}{4}.
    \end{split}
\end{align*}
Additionally, the same proof can be used to show a similar result for convex, $M$-Lipschitz functions. In this case $L$-smooth is replaced with $\frac{M\sqrt{2d}}{\sigma}$-smooth and $\frac{\sigma^2Ld}{4}$ is replaced with $M\sigma\sqrt{\frac{d}{2}}$.
\end{rem}

The proof of Theorem~\ref{thm:GSmoothGD_convex} is a modification of the proof that the standard GD algorithm converges when used to minimize convex functions (see Section 1.2.3 of \cite{Nesterov_book_2004} for example).  In order to generalize the theory of GD to include the use of the gradient of the Gaussian smoothing of a convex function we exploit several new results shown below.  %
First, we show that the Gaussian smoothing functions inherit convexity and $L$-smoothness from $f$ (Lemma~\ref{lem:fsigmalsmoothandconvex}). Second, we detail how the function values of $f$ and $f_{\sigma}$ are related (Lemmas~\ref{lem:fsigmagreaterthanf} and~\ref{lem:differentsmoothingvalues}). Third, we show that the smaller the smoothing value, the closer the Gaussian smoothing function is to the original function (at least when the original function is convex) (Lemma~\ref{lem:differentsmoothingvalues}).


\subsection{Convergence of GSmoothGD for high-dimensional non-convex functions}
\label{sec:non-convex}
In the setting of non-convex functions when a minimizer exists, we no longer have that increasing the smoothing parameter also increases the output of the smoothing function. Hence, changing $\sigma_i$ between iterations increases the bound on the gradient. This is the main result of this section, which is stated in the following theorem.

\begin{thm}[Convergence of GSmoothGD for non-convex functions]\label{thm:GSmoothGD_non-convex}
Suppose $f:\mathbb{R}^d\to\mathbb{R}$ is non-convex, $L$-smooth and let $(\sigma_n)_{n=1}^{\infty}$ be a sequence of positive real numbers. 
Then, after $k$ iterations of GSmoothGD, defined by \eqref{eq:GSmoothGD}, with a fixed step size $0< t\leq\frac{1}{L}$,
\begin{multline}
    \min_{i=1,...,k}\|\nabla f(\bmx_i)\|^2
    \leq
        \frac{4}{tk}\big(f_{\sigma_1}(\bmx_1)-f_{\sigma_{k+1}}(\bmx_{k+1})\big)\\
        +\frac{Ld}{2tk}\sum_{i=1}^{k}|\sigma_{i+1}^2-\sigma_i^2|
        +\frac{L^2(6+d)^3}{4k}\sum_{i=1}^{k}\sigma_{i+1}^2
\end{multline}
\end{thm}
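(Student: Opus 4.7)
The plan is to mimic the standard non-convex gradient-descent analysis, but run on the smoothed surrogates $f_{\sigma_{k+1}}$ rather than on $f$ itself, and then convert back to $\|\nabla f\|^2$ using a Nesterov--Spokoiny-style bound on $\|\nabla f_\sigma-\nabla f\|$. The key complication is that each iteration uses its own surrogate, so the telescoping has to be handled carefully, and this is where the term $\sum|\sigma_{i+1}^2-\sigma_i^2|$ will come from.

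First, by Lemma~\ref{lem:fsigmalsmoothandconvex}(a) each $f_{\sigma_{k+1}}$ is $L$-smooth, so Proposition~\ref{prop:altdefflsmooth} applied with the update $\bmx_{k+1}=\bmx_k-t\nabla f_{\sigma_{k+1}}(\bmx_k)$ yields
\begin{equation*}
    f_{\sigma_{k+1}}(\bmx_{k+1})\leq f_{\sigma_{k+1}}(\bmx_k)-t\bigl(1-\tfrac{Lt}{2}\bigr)\|\nabla f_{\sigma_{k+1}}(\bmx_k)\|^2,
\end{equation*}
and since $t\leq 1/L$ the coefficient is at least $t/2$. I then substitute $\|\nabla f_{\sigma_{k+1}}(\bmx_k)\|^2\geq \tfrac{1}{2}\|\nabla f(\bmx_k)\|^2-\|\nabla f_{\sigma_{k+1}}(\bmx_k)-\nabla f(\bmx_k)\|^2$ (the elementary Young-type inequality $\|a\|^2\geq\tfrac{1}{2}\|b\|^2-\|a-b\|^2$) to turn the $f_\sigma$-gradient into the $f$-gradient at the price of a residual term, giving a per-step inequality of the form
\begin{equation*}
    \tfrac{t}{4}\|\nabla f(\bmx_k)\|^2 \leq f_{\sigma_{k+1}}(\bmx_k)-f_{\sigma_{k+1}}(\bmx_{k+1}) + \tfrac{t}{2}\|\nabla f_{\sigma_{k+1}}(\bmx_k)-\nabla f(\bmx_k)\|^2.
\end{equation*}

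Next I sum from $i=1$ to $k$ and handle the two right-hand sides separately. For the function-value sum, which does not telescope directly because the subscripts shift, I regroup by iterate and write
\begin{equation*}
    \sum_{i=1}^{k}\bigl(f_{\sigma_{i+1}}(\bmx_i)-f_{\sigma_{i+1}}(\bmx_{i+1})\bigr)
    = f_{\sigma_1}(\bmx_1)-f_{\sigma_{k+1}}(\bmx_{k+1})+\sum_{i=1}^{k}\bigl(f_{\sigma_{i+1}}(\bmx_i)-f_{\sigma_i}(\bmx_i)\bigr),
\end{equation*}
and then invoke Lemma~\ref{lem:differentsmoothingvalues}(a) to bound each bracketed term by $\tfrac{Ld}{4}|\sigma_{i+1}^2-\sigma_i^2|$. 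This is the step I expect to be the main obstacle, because the rearrangement is easy to get off-by-one and the sign of $\sigma_{i+1}^2-\sigma_i^2$ is uncontrolled, forcing the absolute value.

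For the gradient-approximation term, I apply a Nesterov--Spokoiny-type bound of the form $\|\nabla f_\sigma(\bmx)-\nabla f(\bmx)\|^2\leq c\,L^2(d+6)^3\sigma^2$, which follows from $L$-smoothness via $\nabla f_\sigma(\bmx)=\mathbb{E}_u[\nabla f(\bmx+\sigma\sqrt{2}u)]$ (justified by Lemma~\ref{lem:gradient_goes_into_convolution}), Jensen's inequality, and standard Gaussian moment estimates. Combining, multiplying through by $4/(tk)$, and replacing $\tfrac{1}{k}\sum_{i=1}^k\|\nabla f(\bmx_i)\|^2$ by $\min_{i\leq k}\|\nabla f(\bmx_i)\|^2$ on the left produces exactly the three stated contributions: the $f$-value drop scaled by $4/(tk)$, the smoothing-parameter variation scaled by $Ld/(2tk)$, and the residual $\tfrac{L^2(d+6)^3}{4k}\sum\sigma_{i+1}^2$ from the Nesterov--Spokoiny bound.
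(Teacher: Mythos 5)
Your proposal is correct and follows essentially the same route as the paper: the descent lemma applied to the $L$-smooth surrogate $f_{\sigma_{i+1}}$, Lemma~\ref{lem:differentsmoothingvalues} to repair the non-telescoping sum (the paper applies it per step before summing, you sum first and regroup — algebraically identical), and a Nesterov--Spokoiny-type bound to convert $\|\nabla f_{\sigma_{i+1}}(\bmx_i)\|^2$ into $\|\nabla f(\bmx_i)\|^2$ at cost $O(L^2(d+6)^3\sigma_{i+1}^2)$, which the paper takes from Lemma~4 of the cited Nesterov--Spokoiny work and you rederive via Jensen and Gaussian moments. One small remark: both your derivation and the paper's own chain of inequalities actually produce $\tfrac{Ld}{tk}\sum_i|\sigma_{i+1}^2-\sigma_i^2|$ rather than the stated $\tfrac{Ld}{2tk}$, so that factor of two is a pre-existing constant slip in the theorem as written, not a defect of your argument.
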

%
%

\begin{prop}\label{prop:complexity}
Let $f$ be $L$-smooth and $(\sigma_{n})_{n=1}^{\infty}$ be square summable. Then the complexity of GSmoothGD is $\mathcal{O}\left(\frac{1+d^3}{\epsilon^2}\right)$.
\end{prop}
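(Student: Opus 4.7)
The plan is to read off the complexity directly from the bound supplied by Theorem~\ref{thm:GSmoothGD_non-convex}. The complexity statement should be interpreted as the number of iterations $k$ needed to guarantee $\min_{i\leq k}\|\nabla f(\bmx_i)\|^2\leq\epsilon^2$; once we show that each of the three terms on the right-hand side of the theorem is $O(1/k)$ with constants depending on $d$ at most cubically, the conclusion follows by inverting the inequality $\text{RHS}\leq\epsilon^2$.

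Concretely, I would fix the step size at $t=1/L$ (allowed by the theorem's hypothesis) and then bound each term separately. For the first term, $\frac{4}{tk}\bigl(f_{\sigma_1}(\bmx_1)-f_{\sigma_{k+1}}(\bmx_{k+1})\bigr)$, I would invoke Lemma~\ref{lem:fsigmagreaterthanf}(a)--(b) to conclude $f_{\sigma_{k+1}}(\bmx_{k+1})\geq\inf_{\bmy}f(\bmy)$, so the numerator is bounded uniformly in $k$ by a dimension-free constant $C_1:=f_{\sigma_1}(\bmx_1)-\inf f$; this contributes $O(L/k)$. For the second term, I would use the elementary bound $|\sigma_{i+1}^2-\sigma_i^2|\leq\sigma_{i+1}^2+\sigma_i^2$, so that square summability of $(\sigma_n)$ yields $\sum_{i=1}^k|\sigma_{i+1}^2-\sigma_i^2|\leq 2\sum_{n=1}^\infty\sigma_n^2=:2C_2<\infty$; this contributes $O(L^2 d/k)$. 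For the third term, $\sum_{i=1}^k\sigma_{i+1}^2\leq C_2<\infty$ directly, and this contributes $O(L^2(6+d)^3/k)$.

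Summing, I obtain $\min_{i\leq k}\|\nabla f(\bmx_i)\|^2\leq K(1+d^3)/k$ for some constant $K$ independent of $k$ and $\epsilon$ (absorbing $L$, $C_1$, $C_2$, and the expansion of $(6+d)^3=d^3+18d^2+108d+216$). Setting this bound equal to $\epsilon^2$ and solving for $k$ yields $k=O\bigl((1+d^3)/\epsilon^2\bigr)$, which is the claimed complexity. Each iteration of GSmoothGD costs one evaluation of $\nabla f_{\sigma_{k+1}}$, so the iteration count is the complexity in the natural sense here.

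The main subtlety, as opposed to difficulty, is that the proposition's hypotheses do not quite suffice on their own: one needs $f$ to be bounded below in order to control the first term, and this is implicit in the complexity statement (otherwise $f_{\sigma_{k+1}}(\bmx_{k+1})\to-\infty$ is conceivable and no finite $k$ works). I would flag this explicitly. Beyond that, the argument is essentially bookkeeping; the dimension dependence is dictated entirely by the $(6+d)^3$ factor from the third term of the theorem, which is the asymptotically dominant term and pins down the $d^3$ in the complexity.
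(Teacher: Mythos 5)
Your proof is correct and follows essentially the same route as the paper: both read the bound off Theorem~\ref{thm:GSmoothGD_non-convex}, use square summability to make the $\sigma$-sums $O(1)$, and invert to get $k=\mathcal{O}\bigl((1+d^3)/\epsilon^2\bigr)$. Your observation that boundedness below is needed (and only implicit) is apt — the paper's proof silently assumes a minimizer $\xstar$ exists by writing $f(\bmx_0)-f(\xstar)$ where you write $f_{\sigma_1}(\bmx_1)-\inf f$.
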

Note that the increase in complexity compared to~\cite{NesterovSpokoiny15} comes from the fact that we vary the smoothing parameter (rather than forcing it to be sufficiently small) and use $\nabla f_{\sigma_k}$ rather than a finite difference approximation.

\begin{rem}
\label{rem:constant}
If $\sigma_n=\sigma$ for all $n$ (similarly, if $\sum_{i=0}^{\infty}\sigma_i^2$ diverges), then we can't compute the iterations in the same manner. 
In the constant case, we end up with:
\begin{equation}
    \min_{i=1,...,k}\|\nabla f(\bmx_i)\|^2
    \leq\frac{4}{tk}\big(f(\bmx_0)-f(\xstar)\big)
    +\frac{L^2(6+d)^3\sigma^2}{4}.
\end{equation}
So if we want the derivative to be small, we have to make $\sigma$ small too (but we could do this by taking a finite number of unique $\sigma_n$).
\end{rem}

\subsection{Relationship between minimizers and smoothed minimizers}
\label{sec:mins}

In this section, we discuss the relationship between any minimizers of $f$, $\xstar$, and $f_{\sigma}$, $\sstar$, for convex, $L$-smooth functions.
Our motivation comes from the following question: if we can optimize $f$ (i.e. $\xstar$ exists), then can we optimize $f_{\sigma}$ for a fixed $\sigma$ (i.e. does $\sstar$ exist)?
The first result, which is just the contrapositive of Corollary~\ref{cor:fminusfsigma}, says that if $f(\bmx)$ is too large then $\bmx$ cannot minimize $f_{\sigma}$ (Corollary~\ref{cor:contrapositive}).
The second result shows that if the set of minimizers is bounded, then for small enough values of $\sigma$ we know that $\sstar$ exists (Proposition~\ref{prop:fbddsmoothedminsexist}).
This proof requires a technical bound on the smoothing functions (Lemma~\ref{lem:fsigmaxlessthanfsigmay}).
Third, we show that if the set of minimizers of $f$ is bounded then for small enough $\sigma$ that the smooth minimizer $\sstar$ is close to some actual minimum $\xstar$ (Proposition~\ref{prop:smoothedminimizersclosetooriginals}).
We conclude this section with a discussion of epigraphical convergence.
%
Before we show these results, we provide an example to show that if the set of minimizers of $f$ is unbounded, then no minimizers of $f_{\sigma}$ for any $\sigma>0$ may exist.
\begin{example}\label{example:unboundedminimizers}
Let $f:\mathbb{R}\to\mathbb{R}$ be defined as
\begin{equation}
    f(x)=\left\{\begin{array}{ll}
        -x\qquad&x<0\\
        0&x\geq 0
    \end{array}\right..
\end{equation}
Then for all $x\in\mathbb{R}$
\begin{equation}
    \nabla f_{\sigma}(x)
    =\frac{1}{\sqrt{\pi}}\int_{-\infty}^{-\nicefrac{x}{\sigma}}-e^{-u^2}\;du<0.
\end{equation}
This means that $f_{\sigma}$ has no minimum regardless of $\sigma$.
So even though $f(x)$ is minimized by any positive number, $f_{\sigma}(x)$ has no minimum for any $\sigma > 0$.
\end{example}
For the remainder of this section, for any $\epsilon\geq 0$, we will denote the inverse image of $[f(\xstar),f(\xstar)+\epsilon]$ as $f^{\star}(\epsilon)$, defined by
\begin{equation}
    f^{\star}(\epsilon)=\{\bmx\in\mathbb{R}^d:f(\bmx)-f(\xstar)\leq\epsilon\}.
\end{equation}
The following first result in this section allows us to limit the size of the set of possible minimizers of $f_{\sigma}$ solely based on the output of the original $f$.
\begin{cor}\label{cor:contrapositive}
Let $f:\mathbb{R}^d\to\mathbb{R}$ be $L$-smooth and $\sigma\geq 0$.  If
\begin{equation}
    |f(\bmx)-\inf f|>\frac{\sigma^2Ld}{4},
\end{equation}
then $\bmx$ is not a minimizer of $f_{\sigma}$.
In particular, any minimizer of $f_{\sigma}$ is in $f^{\star}(\nicefrac{\sigma^2Ld}{4})$.
\end{cor}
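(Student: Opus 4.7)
The plan is to derive the result as a direct contrapositive of Corollary~\ref{cor:fminusfsigma}(a), using the convexity assumption in force throughout Section~\ref{sec:mins} to bridge between the raw output $f(\bmx)$ and the smoothed output $f_\sigma(\bmx)$.

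First, I would assume that $\bmx$ is a minimizer of $f_\sigma$, with the goal of showing
\begin{equation*}
    f(\bmx)-\inf f\leq\frac{\sigma^2 Ld}{4}.
\end{equation*}
The contrapositive of this inequality is precisely the first assertion of the corollary, and the ``in particular'' clause then follows immediately from the definition of $f^\star(\cdot)$, since $f(\bmx)\geq\inf f$ guarantees $|f(\bmx)-\inf f|=f(\bmx)-\inf f$ whenever $f$ attains its infimum.

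The target inequality is obtained by chaining three steps. Since $f$ is convex and $L$-smooth (hence differentiable everywhere), Lemma~\ref{lem:fsigmagreaterthanf}(c) yields $f(\bmx)\leq f_\sigma(\bmx)$. Because $\bmx$ minimizes $f_\sigma$, we have $f_\sigma(\bmx)=\inf f_\sigma = f_\sigma(\sstar)$ for any smoothed minimizer $\sstar$. Finally, Corollary~\ref{cor:fminusfsigma}(a) bounds $f_\sigma(\sstar)-f(\xstar)\leq\frac{\sigma^2 Ld}{4}$. Concatenating these three inequalities gives the bound, and taking the contrapositive finishes the proof.

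There is no substantive obstacle here: the result is essentially a rewording of Corollary~\ref{cor:fminusfsigma}(a). The one point worth flagging is that convexity of $f$ (asserted at the opening of Section~\ref{sec:mins} but omitted from the corollary's own hypotheses) is essential for invoking Lemma~\ref{lem:fsigmagreaterthanf}(c); without convexity one could only use the two-sided bound in Lemma~\ref{lem:differentsmoothingvalues}(a) to transfer between $f$ and $f_\sigma$, which would yield the strictly weaker conclusion $f(\bmx)-\inf f\leq\frac{\sigma^2 Ld}{2}$.
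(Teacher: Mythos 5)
Your proof is correct and matches the paper's (implicit) argument: the paper offers no separate proof, merely describing this corollary as ``just the contrapositive of Corollary~\ref{cor:fminusfsigma},'' and your chain $f(\bmx)\leq f_{\sigma}(\bmx)=\inf f_{\sigma}\leq \inf f+\nicefrac{\sigma^2Ld}{4}$ is exactly how that contrapositive must be fleshed out. Your flag about convexity is also apt: Lemma~\ref{lem:fsigmagreaterthanf}(c) (hence the standing convexity assumption announced at the start of Section~\ref{sec:mins}, which the corollary's own hypotheses omit) is what delivers the constant $\nicefrac{1}{4}$, whereas the two-sided bound of Lemma~\ref{lem:differentsmoothingvalues}(a) alone would only give $\nicefrac{1}{2}$.
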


We now focus on showing our second result with the technical lemma required in the proof of Proposition~\ref{prop:fbddsmoothedminsexist}.
\begin{lem}\label{lem:fsigmaxlessthanfsigmay}
Let $f:\mathbb{R}^d\to\mathbb{R}$ be $L$-smooth and assume that a minimizer $\xstar$ of $f$ exists.
Let $\sigma > 0$ and let $\epsilon\geq\frac{4\sigma^2Ld}{3}$ then for $\bmx\in f^{\star}\left(\frac{\sigma^2Ld}{4}\right)$
\begin{equation}
    f_{\sigma}(\bmx) < f_{\sigma}(\bmy)\qquad\forall \bmy\notin f^{\star}(\epsilon).
\end{equation}
\end{lem}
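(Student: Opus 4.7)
The plan is to combine the uniform closeness bound $|f_{\sigma}(\bmz) - f(\bmz)| \le \sigma^2 Ld/4$---which follows from Lemma~\ref{lem:differentsmoothingvalues}(a) applied with lower argument $0$ and upper argument $\sigma$---with the two hypotheses on $\bmx$ and $\bmy$, and to estimate $f_{\sigma}(\bmx)$ and $f_{\sigma}(\bmy)$ separately in terms of $f(\xstar)$. The point is that the lemma provides a pointwise ``sandwich'' of $f_{\sigma}$ around $f$ that holds on all of $\mathbb{R}^d$, so there is nothing to do beyond using the upper side at $\bmx$ and the lower side at $\bmy$.

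Concretely, I would first upper-bound $f_{\sigma}(\bmx)$: the membership $\bmx \in f^{\star}(\sigma^2 Ld/4)$ is the inequality $f(\bmx) \le f(\xstar) + \sigma^2 Ld/4$, so chaining with the uniform bound yields
\[
    f_{\sigma}(\bmx) \le f(\bmx) + \frac{\sigma^2 Ld}{4} \le f(\xstar) + \frac{\sigma^2 Ld}{2}.
\]
Next I would lower-bound $f_{\sigma}(\bmy)$: the negation $\bmy \notin f^{\star}(\epsilon)$ is the \emph{strict} inequality $f(\bmy) > f(\xstar) + \epsilon$, and the uniform bound on the other side gives
\[
    f_{\sigma}(\bmy) \ge f(\bmy) - \frac{\sigma^2 Ld}{4} > f(\xstar) + \epsilon - \frac{\sigma^2 Ld}{4}.
\]

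Subtracting, the desired conclusion $f_{\sigma}(\bmx) < f_{\sigma}(\bmy)$ follows as soon as $\frac{\sigma^2 Ld}{2} \le \epsilon - \frac{\sigma^2 Ld}{4}$, that is, $\epsilon \ge \frac{3\sigma^2 Ld}{4}$. Since the hypothesis $\epsilon \ge \frac{4\sigma^2 Ld}{3}$ is strictly stronger than this, the argument closes with room to spare, and the strictness of the final $<$ is inherited from the strict lower bound on $f_{\sigma}(\bmy)$ derived from $\bmy \notin f^{\star}(\epsilon)$. There is no substantive obstacle here: the whole proof is a bookkeeping exercise combining Lemma~\ref{lem:differentsmoothingvalues}(a) with the definition of the sublevel sets $f^{\star}(\cdot)$, and the only point requiring care is tracking which inequalities are strict so that $<$ rather than $\le$ is preserved at the end. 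The slack between the tight threshold $\tfrac{3}{4}\sigma^2 Ld$ and the stated $\tfrac{4}{3}\sigma^2 Ld$ presumably matches what is needed in the downstream application in Proposition~\ref{prop:fbddsmoothedminsexist}.
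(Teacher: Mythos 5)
Your proof is correct and is essentially the paper's own argument: the same uniform sandwich $|f_{\sigma}(\bmz)-f(\bmz)|\leq\sigma^2Ld/4$ from Lemma~\ref{lem:differentsmoothingvalues}(a) is applied upward at $\bmx$ and downward at $\bmy$, with strictness carried by $f(\bmy)>f(\xstar)+\epsilon$. Your side observation that only $\epsilon\geq\frac{3\sigma^2Ld}{4}$ is actually needed is also consistent with the paper's chain, which likewise closes with room to spare under the stated hypothesis $\epsilon\geq\frac{4\sigma^2Ld}{3}$.
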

\begin{proof}
Let $\bmx\in f^{\star}\left(\frac{\sigma^2Ld}{4}\right)$ and $\bmy\notin f^{\star}(\epsilon)$ (that is, $f(\bmy) > f(\xstar) + \epsilon$).
Using Lemma~\ref{lem:differentsmoothingvalues} to switch between $f_{\sigma}$ and $f$, we have
\begin{align}
\begin{split}
    f_{\sigma}(\bmx)
    &\leq f(\bmx) + \frac{\sigma^2Ld}{4}
     \,\leq \, f(\xstar) + \frac{\sigma^2Ld}{2}
    =f(\xstar)+\frac{3\sigma^2Ld}{4}-\frac{\sigma^2Ld}{4}
    \\
    &\ \ \ < f(\xstar)+\epsilon-\frac{\sigma^2Ld}{4}
    <f(\bmy)-\frac{\sigma^2Ld}{4}
    \leq f_{\sigma}(\bmy)
\end{split}
\end{align}
\end{proof}
\begin{rem}
\label{rem:fcon1}
If $f$ is convex, the same result holds using $\epsilon\geq\frac{\sigma^2Ld}{2}$ as in this case $f(\bmy)\leq f_{\sigma}(\bmy)$.
\end{rem}
%
We can now prove our second result that provides a sufficient condition on when a minimizer of $f_{\sigma}$ can exist.
Note that this results does not assume that a minimizer of $f$ even exists.
\begin{prop}\label{prop:fbddsmoothedminsexist}
Let $f:\mathbb{R}^d\to\mathbb{R}$ be $L$-smooth.
Suppose $f^{\star}(\epsilon)$ is bounded for some $\epsilon > 0$.
Let $\sigma > 0$ with $\sigma < \sqrt{\frac{4\epsilon}{3Ld}}$.
Then $\sstar$ exists.
\end{prop}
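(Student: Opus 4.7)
The plan is to exploit Lemma~\ref{lem:fsigmaxlessthanfsigmay} to localize the infimum of $f_\sigma$ inside a compact sublevel set of $f$, whereupon continuity of $f_\sigma$ forces attainment of the minimum. First, I would verify that the hypothesis implicitly yields a minimizer $\xstar$ of $f$ (as required by Lemma~\ref{lem:fsigmaxlessthanfsigmay}): since $f$ is $L$-smooth it is continuous, so $f^\star(\epsilon) = f^{-1}\bigl((-\infty,\, f(\xstar)+\epsilon]\bigr)$ (reading $f(\xstar)$ as $\inf f$ a priori) is closed; combined with the boundedness assumption the set is compact, and any minimizing sequence of $f$ has a subsequence converging inside it to a point that attains $\inf f$.

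Next I would choose a threshold $\epsilon'$ with $\tfrac{4\sigma^2 Ld}{3} \leq \epsilon' \leq \epsilon$; the quantitative bound on $\sigma$ is what guarantees this interval is nonempty. Then $f^\star(\epsilon') \subseteq f^\star(\epsilon)$ is bounded, and closed by continuity of $f$, hence compact. Because $\xstar \in f^\star(0) \subseteq f^\star(\sigma^2 Ld/4)$, Lemma~\ref{lem:fsigmaxlessthanfsigmay} applied with this $\epsilon'$ gives $f_\sigma(\xstar) < f_\sigma(\bmy)$ for every $\bmy \notin f^\star(\epsilon')$, so
\begin{equation*}
    \inf_{\bmx \in \mathbb{R}^d} f_\sigma(\bmx) \;=\; \inf_{\bmx \in f^\star(\epsilon')} f_\sigma(\bmx).
\end{equation*}

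Finally, by Lemma~\ref{lem:fsigmalsmoothandconvex} the smoothed function $f_\sigma$ is $L$-smooth and in particular continuous on $\mathbb{R}^d$, so it attains its infimum on the compact set $f^\star(\epsilon')$; that minimizer is the desired $\sstar$. The only delicate point in this plan is the numerical bookkeeping when selecting $\epsilon'$ so that both the hypothesis of Lemma~\ref{lem:fsigmaxlessthanfsigmay} and the boundedness inclusion $f^\star(\epsilon') \subseteq f^\star(\epsilon)$ hold simultaneously; once that is arranged, the rest is a clean compactness-plus-continuity argument that needs no further inequalities.
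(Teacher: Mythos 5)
Your proposal is correct and follows essentially the same route as the paper's proof: compactness of the sublevel set $f^{\star}(\epsilon)$ plus continuity of $f_{\sigma}$ gives a minimizer of $f_{\sigma}$ restricted to that set, and Lemma~\ref{lem:fsigmaxlessthanfsigmay} promotes it to a global minimizer; the paper simply applies the lemma with $\epsilon$ itself rather than an intermediate $\epsilon'$, and your extra verification that $\xstar$ exists is a detail the paper leaves implicit. The one caveat is that the stated hypothesis $\sigma<\sqrt{4\epsilon/(3Ld)}$ only yields $\tfrac{4\sigma^2Ld}{3}<\tfrac{16\epsilon}{9}$ rather than $\tfrac{4\sigma^2Ld}{3}\leq\epsilon$, so the interval you propose for $\epsilon'$ can be empty --- but this constant mismatch is inherited from the proposition's hypothesis and is present in the paper's own application of Lemma~\ref{lem:fsigmaxlessthanfsigmay} as well.
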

\begin{proof}
Since $f$ is continuous, $f^{\star}(\epsilon)$ is also closed.
By assumption, $f^{\star}(\epsilon)$ is bounded, which means it is compact too.
This guarantees that $f_{\sigma}|_{f^{*}(\epsilon)}$ attains its minimum on $f^{*}(\epsilon)$.
Let
\begin{equation}
    \bmx_{\sigma}^{\epsilon} \in \argmin_{\bmx\in f^{*}(\epsilon)}f_{\sigma}(\bmx).
\end{equation}
Since $\frac{\sigma^2Ld}{4} < \epsilon$, $f^{\star}(\nicefrac{\sigma^2Ld}{4})\subseteq f^{\star}(\epsilon)$ and so
$f_{\sigma}(\bmx_{\sigma}^{\epsilon})\leq f_{\sigma}(\bmx)\;\; \forall \bmx\in f^{\star}\left(\nicefrac{\sigma^2Ld}{4}\right)$.
By Lemma \ref{lem:fsigmaxlessthanfsigmay},
$\displaystyle
    f_{\sigma}(\bmx_{\sigma}^{\epsilon})\leq f_{\sigma}(\bmy)\;\;\forall \bmy\notin f^{\star}(\epsilon), 
$
which means that
$\displaystyle
    f_{\sigma}(\bmx_{\sigma}^{\epsilon})\leq f_{\sigma}(\bmx)\;\; \forall \bmx\in \mathbb{R}^d,
$
making $\bmx_{\sigma}^{\epsilon}$ a minimizer of $f_{\sigma}$.
\end{proof}
\begin{rem}
\label{rem:fcon2}
If $f$ is convex, then we have the same result for $\sigma < \sqrt{\frac{2\epsilon}{Ld}}$.
\end{rem}
%

If $f$ is convex, then the set of minimizers being bounded means that the set of points that output just a little more than the minimum is also bounded.
On the other hand, the following proposition does not necessarily hold for non-convex functions. In fact for non-convex functions, it can be the case that $f^{\star}(0)$ is bounded and for all $\epsilon>0$ $f^{\star}(\epsilon)$ is unbounded. This can occur when there is a sequence $(\bmx_n)_{n=1}^{\infty}$ where $\|\bmx_n\|\to\infty$ and $f(\bmx_n)\to f(\xstar)$.

\begin{prop}\label{prop:convexfunctionshaveboundedalmostminimizersets}
Let $f:\mathbb{R}^d\to\mathbb{R}$ be convex and $L$-smooth with a bounded, nonempty minimizer set, $f^{\star}(0)$.
Then there exists $\epsilon>0$ so that $f^{\star}(\epsilon)$ is bounded too.
\end{prop}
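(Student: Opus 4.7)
The plan is to argue by contradiction, assuming that $f^{\star}(\epsilon)$ is unbounded for every $\epsilon>0$, and to extract from this a direction in which $f$ stays at its minimum value along an entire ray emanating from a fixed minimizer $\xstar$. Since any such ray is contained in $f^{\star}(0)$, this would contradict the hypothesis that $f^{\star}(0)$ is bounded. Notably, this strategy uses only convexity and continuity of $f$; the $L$-smoothness hypothesis is stronger than we actually need, but it is what the statement assumes.

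Concretely, I would first fix any $\xstar\in f^{\star}(0)$. Under the contradiction hypothesis, taking $\epsilon=1/n$ for each $n\in\mathbb{N}$, one can select $\bmx_n\in f^{\star}(1/n)$ with $r_n:=\|\bmx_n-\xstar\|\to\infty$. Set $\bmv_n=(\bmx_n-\xstar)/r_n$; since the unit sphere in $\mathbb{R}^d$ is compact, a subsequence satisfies $\bmv_n\to\bmv$ with $\|\bmv\|=1$. For any fixed $t>0$ and sufficiently large $n$, the point $\xstar+t\bmv_n$ lies on the segment from $\xstar$ to $\bmx_n$, so convexity of $f$ yields
\begin{equation}
    f(\xstar+t\bmv_n)\leq\Big(1-\tfrac{t}{r_n}\Big)f(\xstar)+\tfrac{t}{r_n}f(\bmx_n)\leq f(\xstar)+\tfrac{t}{nr_n}.
\end{equation}
Passing to the limit $n\to\infty$ and invoking continuity of $f$ gives $f(\xstar+t\bmv)\leq f(\xstar)$, hence equality, for every $t\geq 0$. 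Therefore the entire ray $\{\xstar+t\bmv:t\geq 0\}$ lies in $f^{\star}(0)$, contradicting the boundedness of the minimizer set.

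The main obstacle is ensuring that the direction limit $\bmv$ truly produces a ray of minimizers rather than merely a single point: this is precisely where convexity along the segment $[\xstar,\bmx_n]$ does the heavy lifting, converting the near-minimality of $f(\bmx_n)$ into near-minimality at every intermediate fixed distance $t$, uniformly as $n\to\infty$. Once this uniformity is in place, the continuity of $f$ (guaranteed by $L$-smoothness) closes the argument. No quantitative control of $\epsilon$ in terms of the diameter of $f^{\star}(0)$ is needed, since we only require existence of some $\epsilon>0$ producing a bounded sublevel set.
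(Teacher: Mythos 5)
Your proof is correct. It follows the same overall strategy as the paper's --- argue by contradiction, pick distant near-minimizers $\bmx_n\in f^{\star}(1/n)$, exploit convexity along the segment from $\xstar$ to $\bmx_n$, then finish with compactness and continuity --- but the two arguments extract the contradiction differently. The paper intersects each segment $[\xstar,\bmx_n]$ with the sphere of radius $M$ (where $M$ is chosen so that $f^{\star}(0)\subseteq B(0,M/2)$), producing points $\bmy_n$ with $\|\bmy_n\|=M$ and $f(\bmy_n)\to f(\xstar)$; a convergent subsequence then yields a single spurious minimizer at distance $M$ from the origin. You instead normalize the directions $(\bmx_n-\xstar)/\|\bmx_n-\xstar\|$, pass to a limiting unit vector $\bmv$, and show that the entire ray $\{\xstar+t\bmv: t\geq 0\}$ consists of minimizers --- the classical recession-direction argument. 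Your version proves slightly more (a nonzero recession direction of $f^{\star}(0)$, hence an unbounded ray of minimizers) and, as you correctly observe, needs only convexity and continuity rather than $L$-smoothness; the paper's version gets away with exhibiting one extra limit point and never introduces directions. Both arguments are complete, and the one step in yours that deserves the care you gave it --- that the near-minimality of $f(\bmx_n)$ transfers uniformly to the fixed-distance points $\xstar+t\bmv_n$ via the bound $f(\xstar)+t/(nr_n)$ --- is handled correctly.
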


\begin{proof}
Let $f:\mathbb{R}^d\to\mathbb{R}$ be convex and $L$-smooth with a bounded minimizer set, $f^{\star}(0)$.
Suppose towards a contradiction that $f^{\star}(\epsilon)$ is unbounded for all $\epsilon>0$.
Since $f^{\star}(0)$ is bounded, there is an $M>0$ so that $f^{\star}(0)\subseteq B(0,\frac{M}{2})$.
Since $f^{\star}(\frac{1}{n})$ is unbounded for all $n\in\mathbb{N}$, there exists $\bmx_n\in f^{\star}(\frac{1}{n})\cap B(0,M)^c$.
For any $0<t<1$, using the fact that $f$ is convex and the choice of $x_n$, we have
\begin{equation}
    f(t\xstar+(1-t)\bmx_n)
    \leq tf(\xstar)+(1-t)f(\bmx_n)
    < f(\xstar)+\frac{1}{n}.
\end{equation}
For each $n\in\mathbb{N}$, there exists $t_n\in(0,1)$ so that
\begin{equation}
    \bmy_n=t_n\xstar+(1-t_n)\bmx_n
    \text{ and }
    \|\bmy_n\|=M.
\end{equation}
This means $f(\bmy_n)\to f(\xstar)$.
Since $\{\bmy_n\}_{n=1}^{\infty}$ is bounded, there is a convergent subsequence $(\bmy_{n_i})_{i=1}^{\infty}$ to the point $\bmy_0$.
Since $\|\bmy_n\|=M$, $\|\bmy_0\|=M$ too.
This would mean that $\bmy_0\in f^{\star}(0)$, but since $f^{\star}(0)\subseteq B(0,\frac{M}{2})$, this cannot be the case and we arrive at our contradiction.
Therefore, for some $\epsilon>0$, $f^{\star}(\epsilon)$ is bounded.
\end{proof}

We can now prove our third result, which shows that the set of minimizers of $f_{\sigma}$ grow in a continuous manner when $f$ is convex.
\begin{prop}\label{prop:smoothedminimizersclosetooriginals}
Let $f:\mathbb{R}^d\to\mathbb{R}$ be $L$-smooth and $(\sigma_n)_{n=1}^{\infty}$ with $\sigma_n\geq 0$ and $\sigma_n\to 0$.
Suppose $f^{\star}(\delta)$ is bounded for some $\delta > 0$.
Then for all $\epsilon>0$, there exists $N\in\mathbb{N}$ such that for all $n\geq N$
\begin{equation}
    d(\xstar_{\sigma_n},f^{\star}(0))
    =\inf\left\{\|\xstar_{\sigma_n}-\xstar\|:\xstar\in f^{\star}(0)\right\}
    <\epsilon
\end{equation}
where $\xstar_{\sigma_n}$ is any minimizer of $f_{\sigma_n}$.
\end{prop}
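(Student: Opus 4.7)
The plan hinges on two ingredients established earlier: Corollary~\ref{cor:contrapositive}, which forces every minimizer of $f_{\sigma_n}$ into the sublevel set $f^{\star}(\sigma_n^2 Ld/4)$, and the hypothesis that $f^{\star}(\delta)$ is bounded. I would then close the argument via a standard compactness-plus-contradiction scheme.

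First, I would record a preliminary observation: since $f$ is continuous and $f^{\star}(\delta)$ is closed and bounded, it is compact, so $f$ attains its infimum on $f^{\star}(\delta)$, which easily implies $f^{\star}(0)$ is a non-empty compact subset of $f^{\star}(\delta)$ (otherwise, any infimizing sequence would live in $f^{\star}(\delta)$ and have a convergent subsequence producing an actual minimizer). This is a small but necessary sanity check that makes $d(\cdot,f^{\star}(0))$ well defined and ensures Corollary~\ref{cor:contrapositive} can be applied.

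Second, using $\sigma_n \to 0$, I pick $N_0$ so that $\sigma_n^2 Ld/4 < \delta$ for all $n \geq N_0$. Corollary~\ref{cor:contrapositive} then gives $\xstar_{\sigma_n} \in f^{\star}(\sigma_n^2 Ld/4) \subseteq f^{\star}(\delta)$, and in particular $0 \leq f(\xstar_{\sigma_n}) - f(\xstar) \leq \sigma_n^2 Ld/4 \to 0$.

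Third, I argue by contradiction. If the conclusion failed, there would be $\epsilon > 0$ and a subsequence $(\xstar_{\sigma_{n_k}})$ with $d(\xstar_{\sigma_{n_k}}, f^{\star}(0)) \geq \epsilon$. Compactness of $f^{\star}(\delta)$ yields a further subsequence converging to some $\bmx_0 \in f^{\star}(\delta)$. Continuity of $f$ combined with step two gives $f(\bmx_0) = \lim_k f(\xstar_{\sigma_{n_k}}) = f(\xstar)$, so $\bmx_0 \in f^{\star}(0)$. But the map $\bmy \mapsto d(\bmy, f^{\star}(0))$ is continuous, so passing to the limit in $d(\xstar_{\sigma_{n_k}}, f^{\star}(0)) \geq \epsilon$ forces $d(\bmx_0, f^{\star}(0)) \geq \epsilon > 0$, contradicting $\bmx_0 \in f^{\star}(0)$.

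The only genuinely delicate point is the preliminary step verifying that $f^{\star}(0)$ is non-empty; after that, the proof is routine compactness and continuity. The remaining steps are mechanical once one notes that Corollary~\ref{cor:contrapositive} shrinks the admissible location of $\xstar_{\sigma_n}$ into arbitrarily thin neighborhoods of the true minimizer set as $\sigma_n\to 0$.
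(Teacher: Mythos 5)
Your argument is correct and matches the paper's proof in all essentials: both confine $\xstar_{\sigma_n}$ to $f^{\star}(\delta)$ for large $n$ (via Corollary~\ref{cor:contrapositive}), extract a convergent subsequence by compactness of $f^{\star}(\delta)$, and use continuity of $f$ to force the limit point into $f^{\star}(0)$, contradicting $d(\cdot,f^{\star}(0))\geq\epsilon$. The only difference is bookkeeping: you verify non-emptiness of $f^{\star}(0)$ up front, whereas the paper instead invokes Proposition~\ref{prop:fbddsmoothedminsexist} to guarantee that the minimizers $\xstar_{\sigma_n}$ actually exist for all sufficiently large $n$.
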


\begin{proof}
Let $f:\mathbb{R}^d\to\mathbb{R}$ be $L$-smooth and $(\sigma_n)_{n=1}^{\infty}$ with $\sigma_n\geq 0$ and $\sigma_n\to 0$.
Since $\sigma_n\to 0$, there exists $N\in\mathbb{N}$ so that $\sigma_n\leq\sqrt{\frac{4\delta}{3Ld}}$.
By Proposition~\ref{prop:fbddsmoothedminsexist}, $\xstar_{\sigma_n}$ exists for all $n\geq N$.
Suppose, towards a contradiction, that there is some $\epsilon>0$ so that for all $i\in\mathbb{N}_{\geq N}$ there exists $n_i\geq i$ with $d(\xstar_{\sigma_{n_i}},f^{\star}(0))\geq\epsilon$.
This means that no subsequence of $(\xstar_{\sigma_{n_i}})_{i=N}^{\infty}$ can converge to a point in $f^{\star}(0)$.
Since $\sigma_n\to 0$, we still have that $\sigma_{n_i}\to 0$.
By Corollary~\ref{cor:fminusfsigma}
\begin{equation}
    |f(\xstar_{\sigma_{n_i}})-f(\xstar)|
    \leq\frac{\sigma_{n_i}^2Ld}{4}
    \leq\delta,
\end{equation}
so by Corollary~\ref{cor:contrapositive} $\xstar_{\sigma_{n_i}}\in f^{\star}(\delta)$ for all $i\geq N$.
Since $f^{\star}(\delta)$ is bounded, there exists a subsequence $(\xstar_{\sigma_{n_{i_j}}})_{j=1}^{\infty}$ that converges to a point, say $\bmx_0$.
Since
\begin{equation}
    |f(\xstar_{\sigma_{n_{i_j}}})-f(\xstar)|
    \leq\frac{\sigma_{n_{i_j}}^2Ld}{4}
    \to 0
\end{equation}
(again using Corollary~\ref{cor:fminusfsigma}) and $f$ is continuous, $f(\bmx_0)=f(\xstar)$.
However, this means that $\bmx_0\in f^{\star}(0)$, which is a contradiction.
\end{proof}

\begin{cor}
Let $f:\mathbb{R}^d\to\mathbb{R}$ be convex and $L$-smooth with a unique minimizer $\xstar$. Then for any $\sigma_n\to 0$ with $\sigma_n\geq 0$,
\begin{equation}
    \xstar_{\sigma_n}\to \xstar,
\end{equation}
where $\xstar_{\sigma_n}$ is any minimizer of $f_{\sigma_n}$.
\end{cor}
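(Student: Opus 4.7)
The plan is to reduce this corollary to a direct application of Propositions~\ref{prop:convexfunctionshaveboundedalmostminimizersets} and~\ref{prop:smoothedminimizersclosetooriginals}, noting that uniqueness of the minimizer makes the distance $d(\cdot,f^{\star}(0))$ collapse to a distance from the single point $\xstar$.

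First, I would observe that since $\xstar$ is the unique minimizer, the minimizer set $f^{\star}(0)=\{\xstar\}$ is nonempty and (trivially) bounded. This is exactly the hypothesis required by Proposition~\ref{prop:convexfunctionshaveboundedalmostminimizersets}, so I would invoke it to obtain some $\delta>0$ for which the sublevel set $f^{\star}(\delta)$ is bounded. This is the step that genuinely uses convexity: without it, $f^{\star}(\delta)$ could be unbounded even when the minimizer is unique (for instance, a bounded non-convex $f$ whose values approach the minimum along a divergent sequence).

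Next, with $f^{\star}(\delta)$ bounded, the hypotheses of Proposition~\ref{prop:smoothedminimizersclosetooriginals} are satisfied for the sequence $\sigma_n\to 0$. Applying that proposition, for every $\epsilon>0$ there is an $N\in\mathbb{N}$ such that for all $n\geq N$,
\begin{equation*}
    d(\xstar_{\sigma_n},f^{\star}(0)) = \inf\{\|\xstar_{\sigma_n}-\bmx\|:\bmx\in f^{\star}(0)\} < \epsilon.
\end{equation*}
(Implicitly, Proposition~\ref{prop:fbddsmoothedminsexist} guarantees that $\xstar_{\sigma_n}$ exists for $n$ large enough, so the quantity on the left is well-defined.)

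Finally, since $f^{\star}(0)=\{\xstar\}$ the infimum is just $\|\xstar_{\sigma_n}-\xstar\|$, and therefore $\|\xstar_{\sigma_n}-\xstar\|<\epsilon$ for all $n\geq N$. As $\epsilon>0$ was arbitrary, this gives $\xstar_{\sigma_n}\to\xstar$, as claimed. The whole argument is essentially bookkeeping; the only nontrivial ingredient is the convexity-based boundedness of $f^{\star}(\delta)$ provided by Proposition~\ref{prop:convexfunctionshaveboundedalmostminimizersets}, and I do not anticipate any obstacles beyond verifying that these prior results apply verbatim.
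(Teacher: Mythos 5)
Your proposal is correct and follows essentially the same route as the paper's proof: invoke Proposition~\ref{prop:convexfunctionshaveboundedalmostminimizersets} to get a bounded sublevel set $f^{\star}(\delta)$, apply Proposition~\ref{prop:smoothedminimizersclosetooriginals}, and use uniqueness of $\xstar$ to identify $d(\xstar_{\sigma_n},f^{\star}(0))$ with $\|\xstar_{\sigma_n}-\xstar\|$. Your added remark that Proposition~\ref{prop:fbddsmoothedminsexist} guarantees existence of $\xstar_{\sigma_n}$ for large $n$ is a detail the paper leaves implicit, but it changes nothing substantive.
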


\begin{proof}
Let $\sigma_n\to 0$ with $\sigma_n\geq 0$ and $\epsilon>0$.
Since $f$ is convex, there exists $\delta>0$ so that $f^{\star}(\delta)$ is bounded.
This means there is an $N\in\mathbb{N}$ so that
\begin{equation}
    \|\xstar_{\sigma_n}-\xstar\|
    = d(\xstar_{\sigma_n},f^{\star}(0))
    < \epsilon
\end{equation}
for all $n\geq N$, where the first equality holds since $\xstar$ is unique.
This shows $\xstar_{\sigma_n}\to \xstar$.
\end{proof}

\begin{rem}
The results discussed so far are related to the notion of epigraphical convergence.

Let $g,g_n:\mathbb{R}^n\to\mathbb{R}$ for $n\in\mathbb{N}$. We say that $g_n$ epigraphically converges or epi-converges to $g$, denoted $g_n\eto g$, if 
\begin{equation}
    \liminf_{n\to\infty} f_n(x_n)\geq f(x)\text{ for any sequence }x_n\to x
\end{equation}
and
\begin{equation}
    \limsup_{n\to\infty} f_n(x_n)\leq f(x)\text{ for at least one sequence }x_n\to x.
\end{equation}

The motivation for epi-convergence is the type of convergence needed to make sure that limits of both minimums and minimizers behave as expected.
In particular, if $g_n\eto g$, then $\limsup\inf g_n\leq\inf f$~\cite[Proposition 7.30]{rockafellar2009variational} and if $x_n$ minimizes $g_n$ and $x_n\to x$ then $g_n\eto g$ (this is a crude version of~\cite[Theorem 7.11]{rockafellar2009variational}).
However, it is possible that $g_n$ has no minimizer for all $n$ despite the fact that $g_n\eto g$ and $g$ has at least one minimizer (this is exactly the case for Example~\ref{example:unboundedminimizers}).
The results that we have shown in this section provide the existence of minimizers of $f_{\sigma}$ which epi-convergence alone cannot guarantee.
Regardless, Lemma~\ref{lem:differentsmoothingvalues} gives us uniform convergence, so by~\cite[Theorem 7.15]{rockafellar2009variational} since $f$ is continuous, we do have epi-convergence of our smoothing functions.
\end{rem}

\section{Numerical methods for GSmoothGD}
\label{sec:num_GSmoothGD}
%
In this section we describe various numerical approximation schemes for Gaussian smoothing, including MC,  variance of the gradient of the smoothed function, and Gaussian homotopy.  We also discuss the strategies employed to appropriately update the smoothing parameter $\sigma$.

\subsection{Monte Carlo GSmoothGD (MC-GSmoothGD)}
\label{sec:MCGSmoothGD}
%
In practice, we cannot evaluate $\nabla f_{\sigma}(\bmx)$ exactly.
Instead, we can use one of the approximations from \cite{nesterov2017random}.
Define $\delta_{\sigma}(\bmx;\bmu)$ as either of the finite difference schemes
\begin{equation}\label{eqn:findiffschemes}
    \frac{f(\bmx+\sigma \bmu)-f(\bmx)}{\nicefrac{\sigma}{2}}
    \quad\text{or}\quad
    \frac{f(\bmx+\sigma \bmu)-f(\bmx-\sigma u)}{\sigma}
\end{equation}
and
\begin{equation}
    \bmg_{\sigma}(\bmx;N)
    =\frac{1}{N}\sum_{n=1}^{N}\delta_{\sigma}(\bmx;\bmu_n)\bmu_n
\end{equation}
where $\bmu_n$ are independent samples from the $e^{-\|\bmu\|^2}$ density.
Regardless of the choice, both are unbiased estimates of $\nabla f_{\sigma}(\bmx)$.
We will use the second choice for the MC-GSmoothGD algorithm (Algorithm~\ref{alg:mcGSmoothGD}).
Theorem~\ref{thm:mcGSmoothGD} shows the convergence of this algorithm.

\begin{algorithm}
    \caption{MC-GSmoothGD}
    \label{alg:mcGSmoothGD}
    \begin{algorithmic}[1]
        \Require $f:\mathbb{R}^d\to\mathbb{R}$, $N\in\mathbb{N}$, $\sigma>0$, $\bmx_0\in\mathbb{R}^d$, $t>0$
        \For{$k=1\to K$}
            \State Sample $\bmu_n\sim\mathcal{N}(0,\frac{1}{2})$ for $n=1,...,N$
            \State $\bmg_k=\displaystyle\frac{1}{N}\sum_{n=1}^{N}\frac{f(\bmx_{k-1}+\sigma \bmu_n)-f(\bmx_{k-1}-\sigma \bmu_n)}{\sigma}\bmu_n$
            \State $\bmx_k = \bmx_{k-1} - t\bmg_k$
        \EndFor 
    \end{algorithmic}
\end{algorithm}

\begin{lem}\label{lem:varianceofmcGSmoothGD}
Let $\delta_{\sigma}(\bmx;\bmu)$ and $\bmg_{\sigma}(\bmx;N)$ be either of the approximations of $\nabla f_{\sigma}(\bmx)$ above.
Then
\begin{equation}
    E(\|\bmg_{\sigma}(\bmx;N)\|^2)
    =\frac{1}{N}E\big(\delta_{\sigma}(\bmx;\bmu_1)^2\|\bmu_1\|^2\big)+\left(1-\frac{1}{N}\right)\|\nabla f_{\sigma}(\bmx)\|^2.
\end{equation}
\end{lem}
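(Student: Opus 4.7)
The plan is to expand the squared norm of the sample mean into a double sum and then separate the diagonal and off-diagonal contributions, exploiting the independence of the samples and the fact (stated in the paragraph just before the lemma) that $\bmg_\sigma(\bmx;1)=\delta_\sigma(\bmx;\bmu_1)\bmu_1$ is an unbiased estimator of $\nabla f_\sigma(\bmx)$.

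Concretely, I would set $\bmZ_n=\delta_\sigma(\bmx;\bmu_n)\bmu_n$, so that the $\bmZ_n$ are i.i.d.\ with $E[\bmZ_n]=\nabla f_\sigma(\bmx)$, and write
\begin{equation}
E\!\left[\|\bmg_\sigma(\bmx;N)\|^2\right]
=\frac{1}{N^2}\sum_{n=1}^{N}\sum_{m=1}^{N}E\!\left[\langle\bmZ_n,\bmZ_m\rangle\right].
\end{equation}
For the $N$ diagonal terms $(n=m)$ we use $E[\|\bmZ_n\|^2]=E[\delta_\sigma(\bmx;\bmu_1)^2\|\bmu_1\|^2]$. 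For each of the $N^2-N$ off-diagonal terms $(n\ne m)$, independence lets the expectation factor as $\langle E[\bmZ_n],E[\bmZ_m]\rangle=\|\nabla f_\sigma(\bmx)\|^2$. Assembling
\begin{equation}
\frac{1}{N^2}\Bigl(N\,E[\delta_\sigma(\bmx;\bmu_1)^2\|\bmu_1\|^2]+N(N-1)\|\nabla f_\sigma(\bmx)\|^2\Bigr)
\end{equation}
yields the claimed identity.

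There is no real obstacle here: the argument is just a bias-variance decomposition for a sample mean of i.i.d.\ vector-valued random variables. The only mild point to justify is that the cross-term expectations factor, which follows from independence of the $\bmu_n$ (and hence of the $\bmZ_n$), and that both finite difference choices in \eqref{eqn:findiffschemes} indeed satisfy $E[\delta_\sigma(\bmx;\bmu)\bmu]=\nabla f_\sigma(\bmx)$; for the symmetric scheme this is immediate from \eqref{eq:fs_grad} together with the symmetry of the Gaussian density, and for the forward-difference scheme it follows because $E[\bmu]=0$ kills the $f(\bmx)$ term, again by Gaussian symmetry.
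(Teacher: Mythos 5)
Your proof is correct and is essentially the paper's argument: the paper packages the identical computation as $E(\|\bmg_{\sigma}(\bmx;N)\|^2)=\mathrm{tr}(\mathrm{Var}(\bmg_{\sigma}(\bmx;N)))+\|\nabla f_{\sigma}(\bmx)\|^2$ together with $\mathrm{Var}(\bmg_{\sigma}(\bmx;N))=\frac{1}{N}\mathrm{Var}(\bmg_{\sigma}(\bmx;1))$, which is precisely your diagonal/off-diagonal split of the double sum. Your added remark verifying unbiasedness of both finite-difference schemes is a reasonable (and correct) justification of a fact the paper simply asserts.
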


\begin{thm}\label{thm:mcGSmoothGD}
Let $f$ be $L$-smooth and bounded below. 
Let $(\sigma_n)_{n=1}^{\infty}$ be a sequence of positive real numbers. 
Pick $0< t<\frac{1}{2L(d+4)}$. 
Let $\delta_{\sigma}(\bmx;\bmu)$ be either of the differences in (\ref{eqn:findiffschemes}) and $\bmg_{\sigma}(\bmx;N)$ be the corresponding $N$-point Monte Carlo approximation of $\nabla f_{\sigma}(\bmx)$.
Define
\begin{equation}
    \bmx_{k+1}=\bmx_k-t\bmg_{\sigma_{k+1}}(\bmx_k;N).
\end{equation}
Then for $N\geq 1$,
\begin{multline}
    \min_{i=0,...,k-1}E(\|\nabla f(\bmx_i)\|^2)\\
    \leq\frac{1}{k}\left(\frac{N}{N-2Lt(d+4)}\right)\Bigg(\frac{4(f(\bmx_0)-f_{\sigma_{k}}(\bmx_{k}))}{t}
    +\left(\frac{L^3t(d+6)^3}{4N}+\frac{2Ld}{t}\right)\sum_{i=1}^{k}\sigma_{i}^2\Bigg).
\end{multline}
\end{thm}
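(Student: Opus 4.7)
The plan is the standard descent-lemma-plus-expectation template for stochastic gradient methods, applied to the $L$-smooth surrogate $f_{\sigma_{k+1}}$. The ingredients I will draw on are: Proposition \ref{prop:altdefflsmooth} applied to $f_{\sigma_{k+1}}$ (which is $L$-smooth by Lemma \ref{lem:fsigmalsmoothandconvex}(a)); unbiasedness of $\bmg_{\sigma_{k+1}}(\bmx_k;N)$ together with the variance identity in Lemma \ref{lem:varianceofmcGSmoothGD}; a Nesterov--Spokoiny-type bound on $\mathbb{E}[\delta_\sigma(\bmx;\bmu)^2\|\bmu\|^2]$ for $L$-smooth $f$; a comparison between $\nabla f_{\sigma}$ and $\nabla f$; and Lemma \ref{lem:differentsmoothingvalues} to pay the cost of switching between $f_{\sigma_i}$ and $f_{\sigma_{i+1}}$ when telescoping.

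Concretely, applying Proposition \ref{prop:altdefflsmooth} to $f_{\sigma_{k+1}}$ with $\bmy=\bmx_{k+1}=\bmx_k-t\bmg_{k+1}$, where $\bmg_{k+1}:=\bmg_{\sigma_{k+1}}(\bmx_k;N)$, gives
\begin{equation*}
    f_{\sigma_{k+1}}(\bmx_{k+1}) \leq f_{\sigma_{k+1}}(\bmx_k) - t\langle \nabla f_{\sigma_{k+1}}(\bmx_k),\bmg_{k+1}\rangle + \tfrac{Lt^2}{2}\|\bmg_{k+1}\|^2.
\end{equation*}
Taking conditional expectation in the samples $\bmu_1,\dots,\bmu_N$ given $\bmx_k$, using $\mathbb{E}[\bmg_{k+1}\mid\bmx_k]=\nabla f_{\sigma_{k+1}}(\bmx_k)$ and Lemma \ref{lem:varianceofmcGSmoothGD}, yields
\begin{equation*}
    \mathbb{E}[f_{\sigma_{k+1}}(\bmx_{k+1})\mid \bmx_k] \leq f_{\sigma_{k+1}}(\bmx_k) - t\bigl(1-\tfrac{Lt(N-1)}{2N}\bigr)\|\nabla f_{\sigma_{k+1}}(\bmx_k)\|^2 + \tfrac{Lt^2}{2N}\mathbb{E}[\delta_{\sigma_{k+1}}(\bmx_k;\bmu)^2\|\bmu\|^2].
\end{equation*}

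Next I invoke the Nesterov--Spokoiny bound for $L$-smooth objectives (adapted to the density $\pi^{-d/2}e^{-\|\bmu\|^2}$ used here), namely $\mathbb{E}[\delta_\sigma(\bmx;\bmu)^2\|\bmu\|^2]\leq\tfrac{L^2\sigma^2(d+6)^3}{2}+2(d+4)\|\nabla f(\bmx)\|^2$, together with the $L$-smoothness-based comparison $\|\nabla f_\sigma(\bmx)-\nabla f(\bmx)\|^2\leq 2L^2\sigma^2 d$, obtained by passing the gradient inside the convolution via Lemma \ref{lem:gradient_goes_into_convolution} and applying Lipschitzness of $\nabla f$ under the expectation. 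These two estimates let me rewrite the descent inequality entirely in terms of $\mathbb{E}\|\nabla f(\bmx_k)\|^2$ plus $\sigma_{k+1}^2$-order error. The step-size restriction $t<1/[2L(d+4)]$ is precisely what makes the resulting coefficient of $\mathbb{E}\|\nabla f(\bmx_k)\|^2$ equal to the positive quantity $\tfrac{t(N-2Lt(d+4))}{N}$. Summing over $i=0,\dots,k-1$ and telescoping via the split
\begin{equation*}
    \mathbb{E}[f_{\sigma_i}(\bmx_i)-f_{\sigma_{i+1}}(\bmx_{i+1})] = \mathbb{E}[f_{\sigma_i}(\bmx_i)-f_{\sigma_{i+1}}(\bmx_i)] + \mathbb{E}[f_{\sigma_{i+1}}(\bmx_i)-f_{\sigma_{i+1}}(\bmx_{i+1})],
\end{equation*}
then absorbing the first piece with Lemma \ref{lem:differentsmoothingvalues} (using $|\sigma_i^2-\sigma_{i+1}^2|\leq\sigma_i^2+\sigma_{i+1}^2$) produces the $\sum\sigma_i^2$ terms on the right. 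Dividing by $k$ and bounding the minimum by the average completes the argument.

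The main obstacle is the careful accounting in the fourth step: substituting the Nesterov--Spokoiny bound introduces a $\tfrac{Lt^2(d+4)}{N}\|\nabla f(\bmx_k)\|^2$ term that must be absorbed into the coefficient of $\|\nabla f_{\sigma_{k+1}}(\bmx_k)\|^2$ only after converting between the two gradients, and it is this conversion that forces the $t<1/[2L(d+4)]$ hypothesis and produces the exact prefactor $\tfrac{N}{N-2Lt(d+4)}$. Matching the precise $(d+6)^3$ coefficient from the MC variance and the $\tfrac{2Ld}{t}\sum\sigma_i^2$ coefficient coming from the telescoping plus Lemma \ref{lem:differentsmoothingvalues} requires careful bookkeeping of factors of $2$ and $N$, but is otherwise mechanical once the structural bound above is in place.
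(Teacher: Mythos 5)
Your proposal follows essentially the same route as the paper's proof: descent lemma on the $L$-smooth surrogate $f_{\sigma_{k+1}}$, expectation via unbiasedness and Lemma~\ref{lem:varianceofmcGSmoothGD}, the Nesterov--Spokoiny second-moment bound on $\delta_\sigma^2\|\bmu\|^2$, a $\|\nabla f\|^2$ versus $\|\nabla f_\sigma\|^2$ comparison to absorb the $(d+4)\|\nabla f\|^2$ term (which is exactly where $t<\tfrac{1}{2L(d+4)}$ and the prefactor $\tfrac{N}{N-2Lt(d+4)}$ arise), and telescoping with Lemma~\ref{lem:differentsmoothingvalues}. The only cosmetic differences are that the paper cites Lemma~4 of the Nesterov--Spokoiny paper for the gradient comparison where you derive a direct bound via Lemma~\ref{lem:gradient_goes_into_convolution}, and your quoted constants ($\tfrac{L^2\sigma^2(d+6)^3}{2}$, $2(d+4)$) are the unrescaled ones for the density $e^{-\|\bmu\|^2/2}$ rather than the adapted values ($\tfrac{L^2\sigma^2(d+6)^3}{16}$, $(d+4)$) needed to match the stated prefactor --- precisely the factor-of-two bookkeeping you flag yourself.
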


If we choose
\begin{equation}
    t = \frac{1}{L(d+4)}
    \text{ and }
    \sigma_i\leq \mathcal{O}\left(\frac{\epsilon}{L^{\nicefrac{1}{2}}(d+6)^{\nicefrac{1}{2}}}\right),
\end{equation}
then, as with \cite{NesterovSpokoiny15}, the upper bound for the expected number of steps is $O(\frac{d}{\epsilon^2})$.

In terms of computational cost, the MC-GSmoothGD algorithm is more expensive than conventional gradient-based methods.
This is mainly due to the extensive computations involved in estimating the smooth gradient $g_k$ in step 3, which is computed by averaging $N$ Monte Carlo samples.
Consequently, the computational burden of the MC-GSmoothGD algorithm amounts to $\mathcal{O}(N)$ floating-point operations per iteration.
For comparison, estimating the conventional gradient vector $\nabla f(x_k)$ requires $\mathcal{O}(d)$ floating-point operations per iteration.
Furthermore, if the gradient $\nabla f$ is given analytically, this computation can be performed in $\mathcal{O}(1)$ floating-point operations.
In our numerical illustrations, we set $d = 100$ and $N = 1000$, resulting in comparable computational costs for the presented algorithms.

\subsection{Updating the smoothing parameter}
\label{sec:compute_sigma}

Regardless of how $\sigma$ is chosen or updated, the results that we have shown here apply to any of the Gaussian smoothing algorithm we have mentioned.
In the homotopy continuation setting (Algorithm~\ref{alg:homotopycontinuation}), for each $\sigma_k$, we minimize (via gradient descent) $f_{\sigma_k}$.
In the context of our results, this corresponds to creating a sequence of smoothing parameters where we repeat $\sigma_k$ as many times as the number of steps in the gradient descent.
Since the goal is to minimize $f_{\sigma_k}$, the double loop is baked into this algorithm.

Instead of focusing on minimizing each $f_{\sigma_k}$, we can perform gradient descent on the $\mathbb{R}^{d+1}$ function $f_{\sigma}(x)$.
This is what SLGH does by doing a gradient descent steps simultaneously in $\bmx$ and $\sigma$.
We know that the minimum of $f_{\sigma}(\bmx)$ over both $\bmx$ and $\sigma$ occurs when $\sigma = 0$ (see Lemma~\ref{lem:fsigmagreaterthanf}), so if SLGH finds the global minimum of $f_{\sigma}(\bmx)$ then it has found the global minimum of $f$.

Without exception, the Gaussian smoothing algorithms we have discussed (Algorithms \ref{alg:homotopycontinuation} and \ref{alg:slgh}) have all required $\sigma$ to decrease.
The motivation for this decrease is to force $\sigma\to0$ so that the original function can be minimized.
In the context of homotopy continuation, if we trace the path of minimums, then we can easily become trapped in a local minimum of $f_{\sigma}$ or $f$.
This can happen even if we start at the global minimum of $f_{\sigma}$ for some $\sigma>0$.
Note that restrictive assumptions can be made to stop this from happening, but we want to avoid further restricting $f$.
The motivation for smoothing is to use smoothing to remove local minimums, but this does not happen when $\sigma$ is small.
So if we force $\sigma$ to decrease and we become trapped in a local minimum, we are unlikely to escape unless we increase $\sigma$ high enough that our gradient descent step moves us away from this trap.

More constructively, suppose the sequence of $x_k$ basically stops changing.
Using Lemma~\ref{lem:fsigmagreaterthanf}, if $f_{\sigma_k}(x_k)<f(x_k)$, then we know the algorithm is stuck in a suboptimal stationary point. At this moment, we could entirely restart with a different initial guess or we could greatly increase $\sigma$ in hopes of smoothing out this local minimum and continue the algorithm. Rather than heurestically or randomly choosing $\sigma_k$, this approach is theoretically sound.

\section{Numerical experiments}
\label{sec:numerics}
%
In this section we compare smoothing-based optimization algorithms with other conventional methods.
We consider smoothing-based algorithms (MC-GSmoothGD, DGS~\cite{zhang2020novel}, LSGD~\cite{osher2022laplacian}, SLGH~\cite{iwakiri2022single}), backpropagation-based algorithms (NAG~\cite{sutskever2013importance}, Adam~\cite{kingma2014adam}, RMSProp~\cite{hinton2012neural}), and classical gradient-based algorithms from numerical optimization (BFGS~\cite{nocedal1999numerical}, CG (Conjugate Gradient)~\cite{nocedal1999numerical}).
The selection of backpropagation- and gradient-based algorithms is based on their widespread usage in practical applications.

Our experiments are performed in \texttt{Python3.8} with the use of \texttt{Numpy} and \texttt{Scipy} packages.
In order to provide a uniform comparison between different algorithm families, each of the smoothing-based and backpropagation-based algorithms is implemented as a custom \texttt{scipy.optimize} method with the same gradient estimation and stopping criteria.
The experiments are performed on a consumer-grade desktop and the source code is publicly available at~\url{https://github.com/sukiboo/smoothing_based_optimization}.

Our testbed consists of well-established multi-dimensional functions from the Virtual Library of Simulation Experiments\footnote{\url{https://www.sfu.ca/~ssurjano/optimization.html}}.
Specifically, we use $100$-dimensional formulations of Ackley, Levy, Michalewicz, Rastrigin, Rosenbrock, and Schwefel functions to test and compare the algorithms.
For Schwefel function we additionally take the absolute value to receive an unconstrained optimization problem.
We note that such a modification does not change the function values or the position of minimizers on the intended domain $[-500, 500]^{100}$.

Each experiment is performed $10$ times with a series of initial guesses (synchronized between different algorithms) that are randomly sampled from the corresponding domain for each test function.
For each algorithm and target function we consider the following increasingly noisy settings:
\begin{enumerate}
    \item each target function evaluation is computed exactly;
    \item each target function evaluation is computed with $0.000001$\% relative perturbation;
    \item each target function evaluation is computed with $0.01$\% relative perturbation.
\end{enumerate}

Results of our experiments are presented in Figure~\ref{fig:numerics_100d_noise}, where for each algorithm the so1lid line indicates the median value across all $10$ experiments and the shaded region denotes the area between $25$-th and $75$-th percentile.

\subsection{Hyperparameters}
For each algorithm and test function we perform a hyperparameter search for learning rate $\lambda$ on the grid [1, $10^{-1}$, $10^{-2}$, $10^{-3}$, $10^{-4}$, $10^{-5}$, $10^{-6}$].
For smoothing-based algorithms we additionally perform a hyperparameter search for the value of the smoothing parameter $\sigma$ on the grid [1, $10^{-1}$, $10^{-2}$, $10^{-3}$].

Hyperparameter searches are performed with the exact function evaluations and the final values of hyperparameters for each algorithm and test function are given in Tables~\ref{tab:hyperparameters_100d_learning_rate} and~\ref{tab:hyperparameters_100d_sigma}.

For the smoothed gradient computation we use $1000$ Monte Carlo samples in MC-GSmoothGD and $5$ Gauss--Hermite quadrature points in DGS across all experiments.
The value of the momentum in NAG is set to $\beta = 0.5$ to offer a reasonable compromise since there seems to be no conventional value and the practical recommendations typically range between $0.1$ and $0.9$.
All unspecified hyperparameters are set to their default values selected by the algorithm's authors.
Note that BFGS and CG use adaptive learning rates, thus these algorithms are not listed in Table~\ref{tab:hyperparameters_100d_learning_rate}.

\begin{figure}[t]
    \centering
    \begin{subfigure}{\linewidth}
        \includegraphics[width=.32\linewidth]{./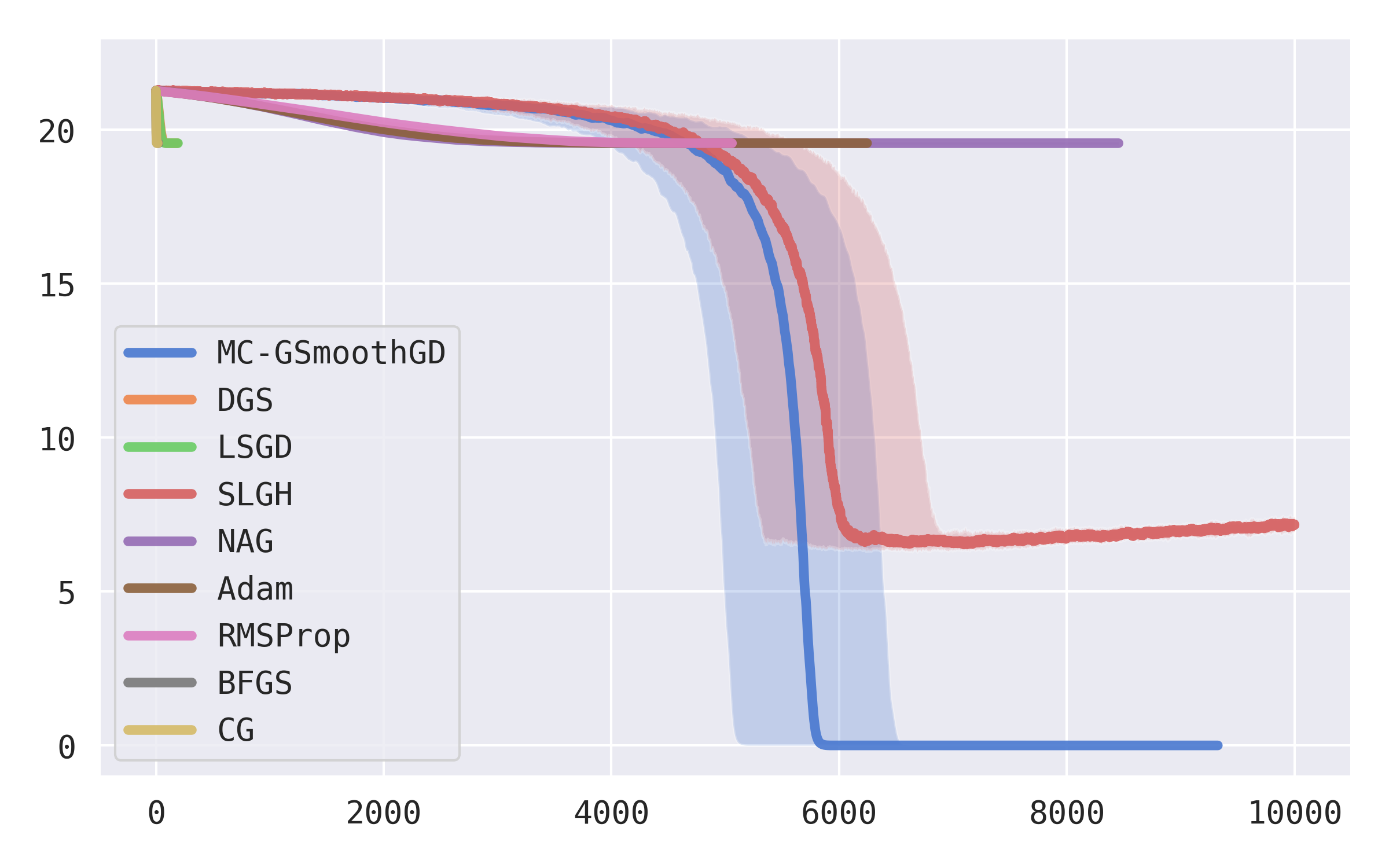}
        \includegraphics[width=.32\linewidth]{./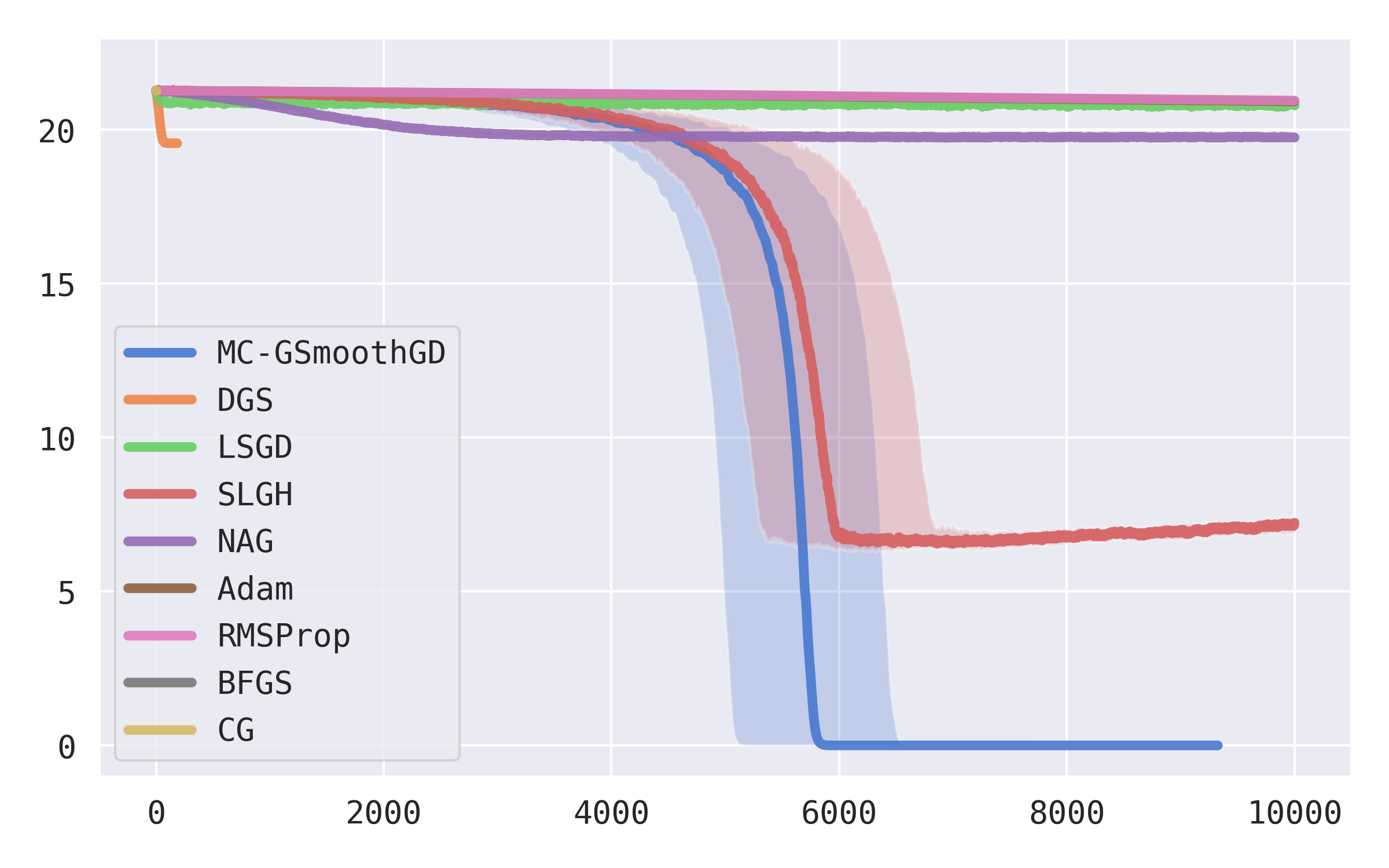}
        \includegraphics[width=.32\linewidth]{./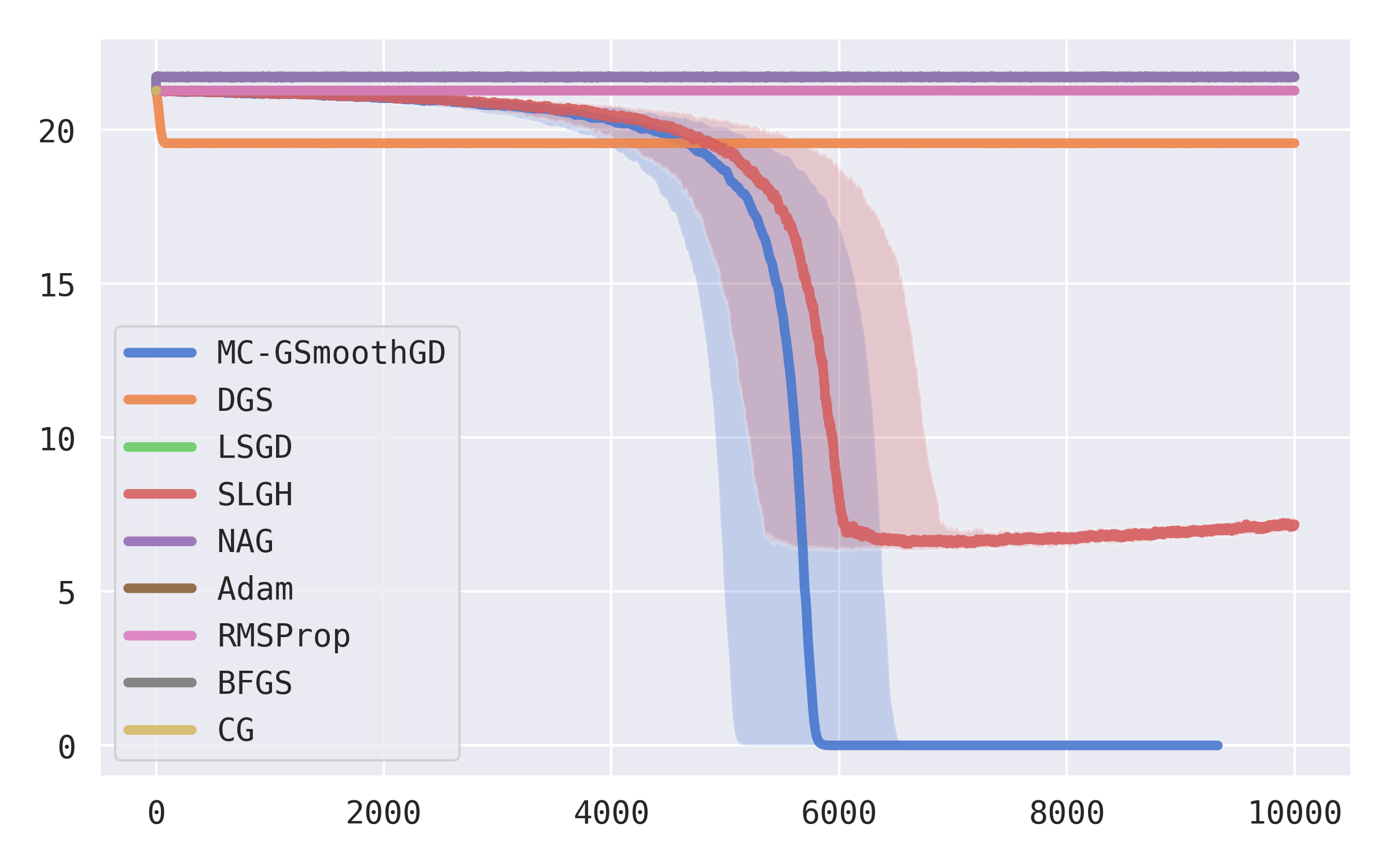}
        \caption{Ackley function}
    \end{subfigure}
    \\
    \begin{subfigure}{\linewidth}
        \includegraphics[width=.32\linewidth]{./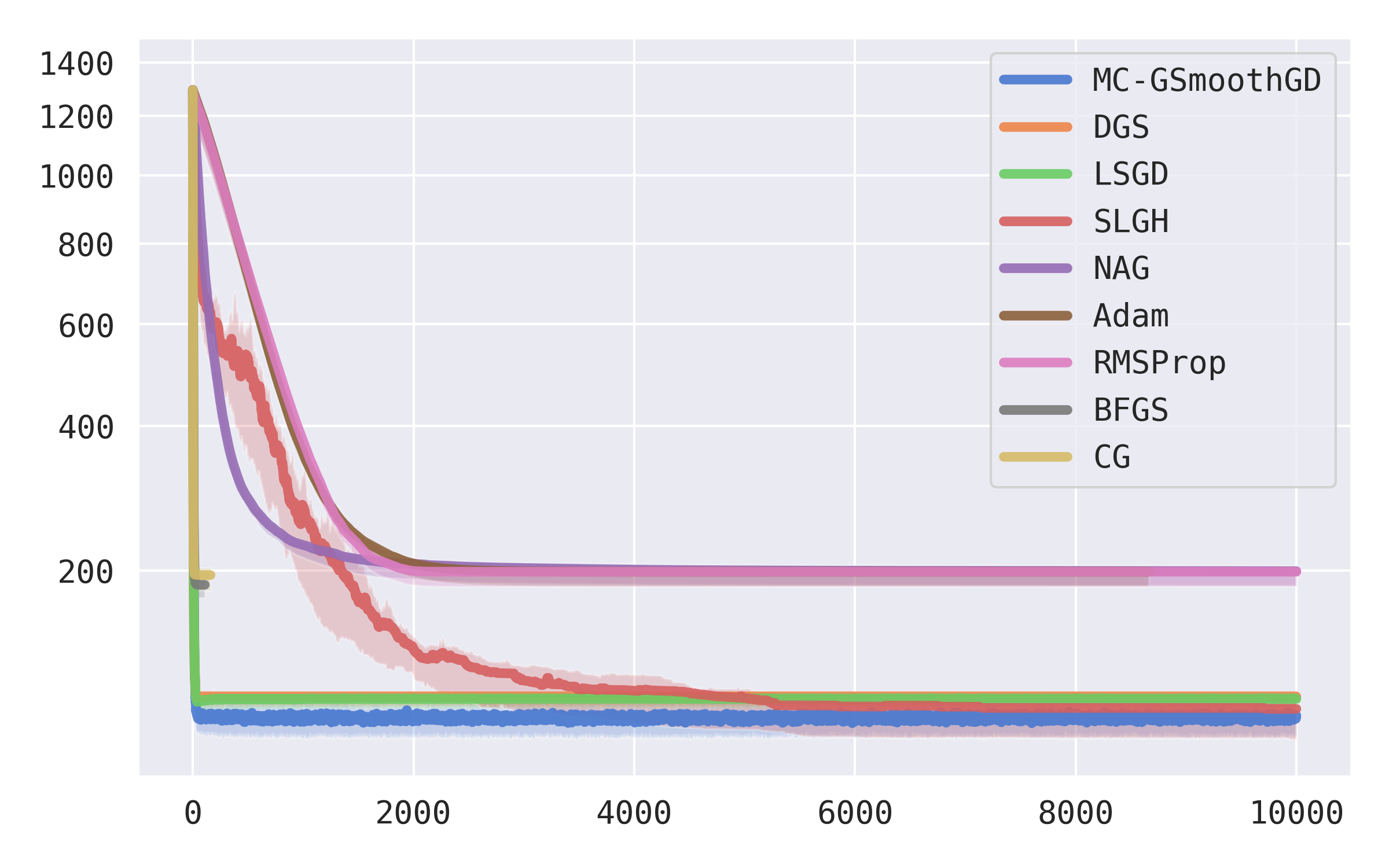}
        \includegraphics[width=.32\linewidth]{./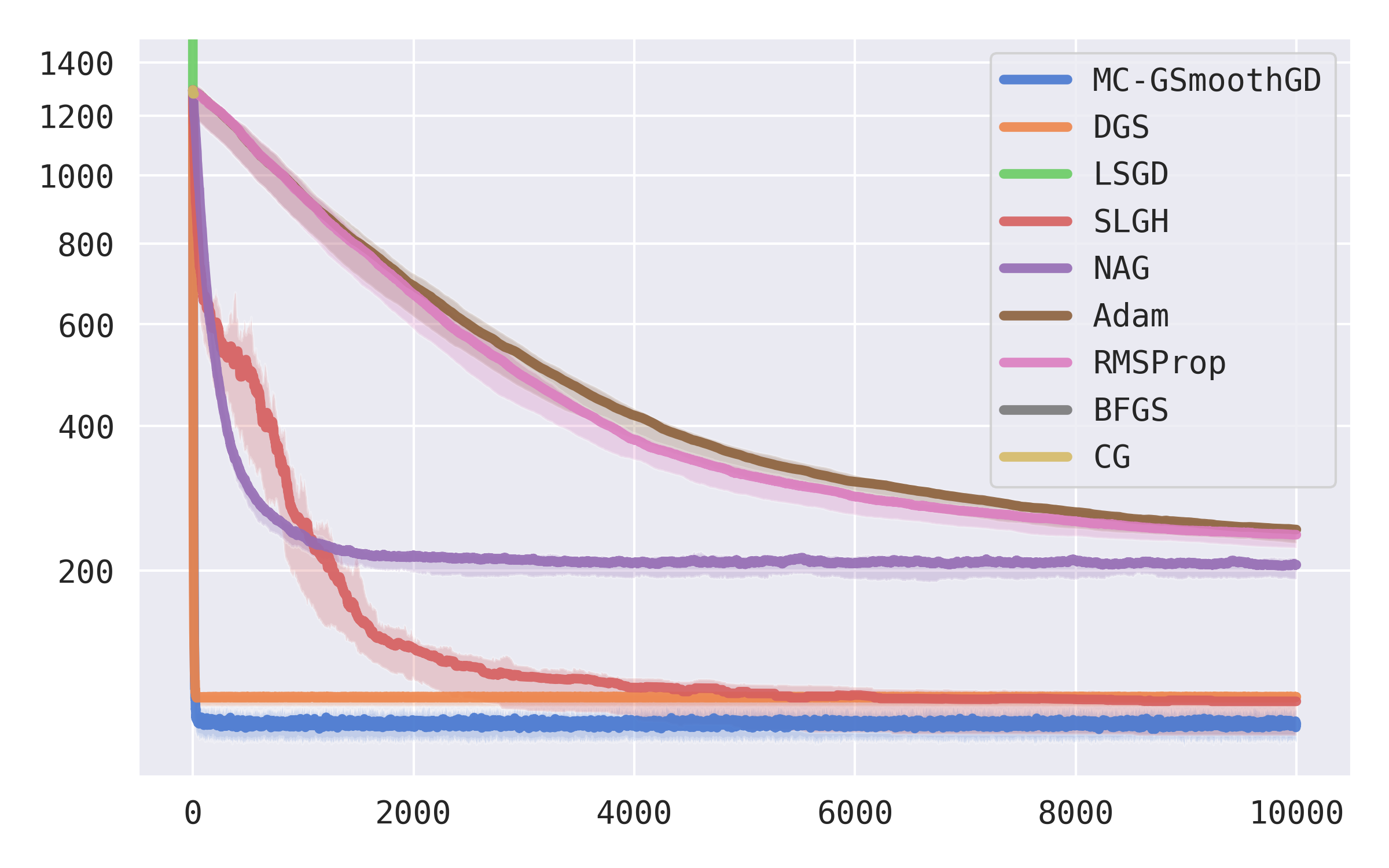}
        \includegraphics[width=.32\linewidth]{./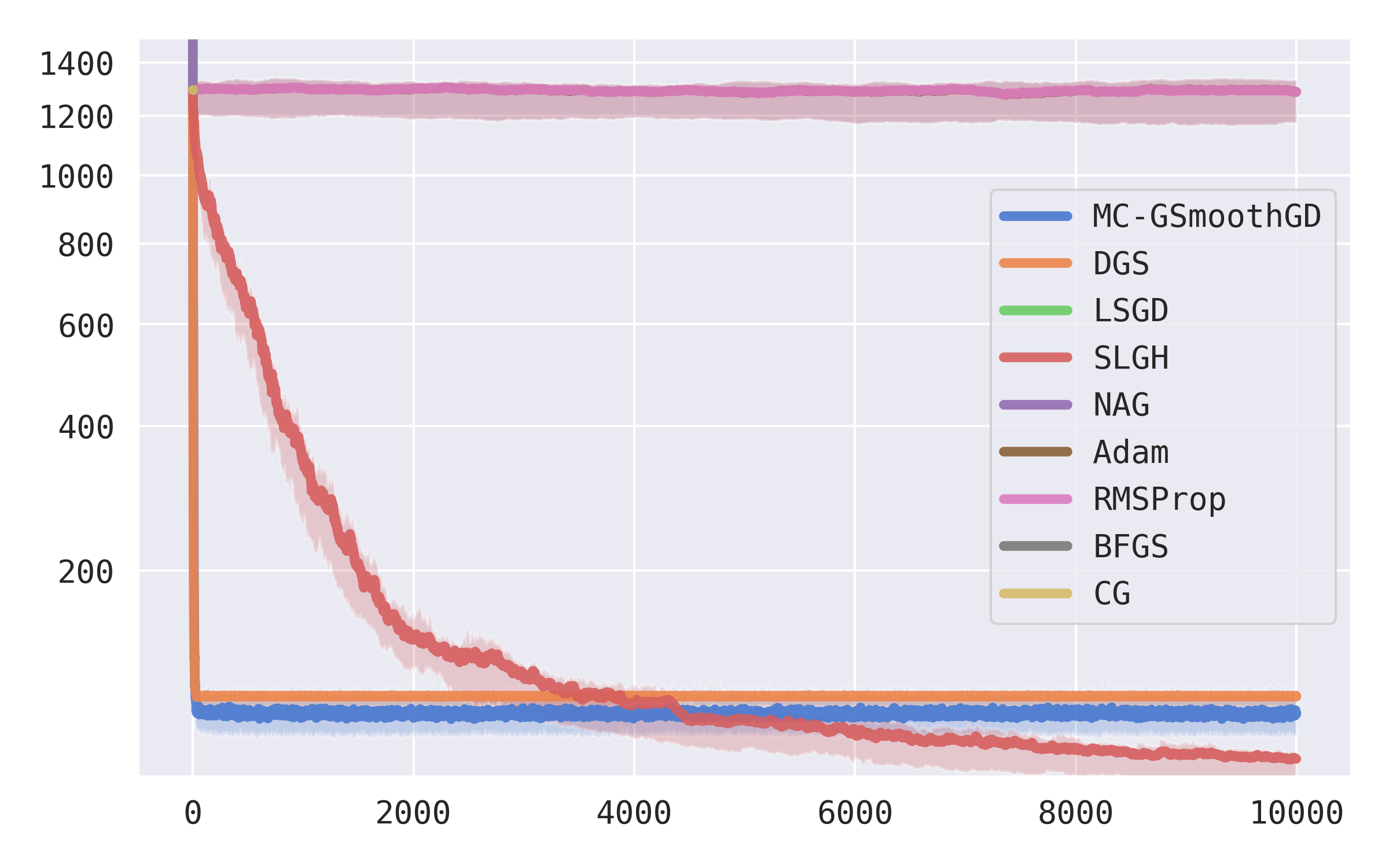}
        \caption{Levy function}
    \end{subfigure}
    \\
    \begin{subfigure}{\linewidth}
        \includegraphics[width=.32\linewidth]{./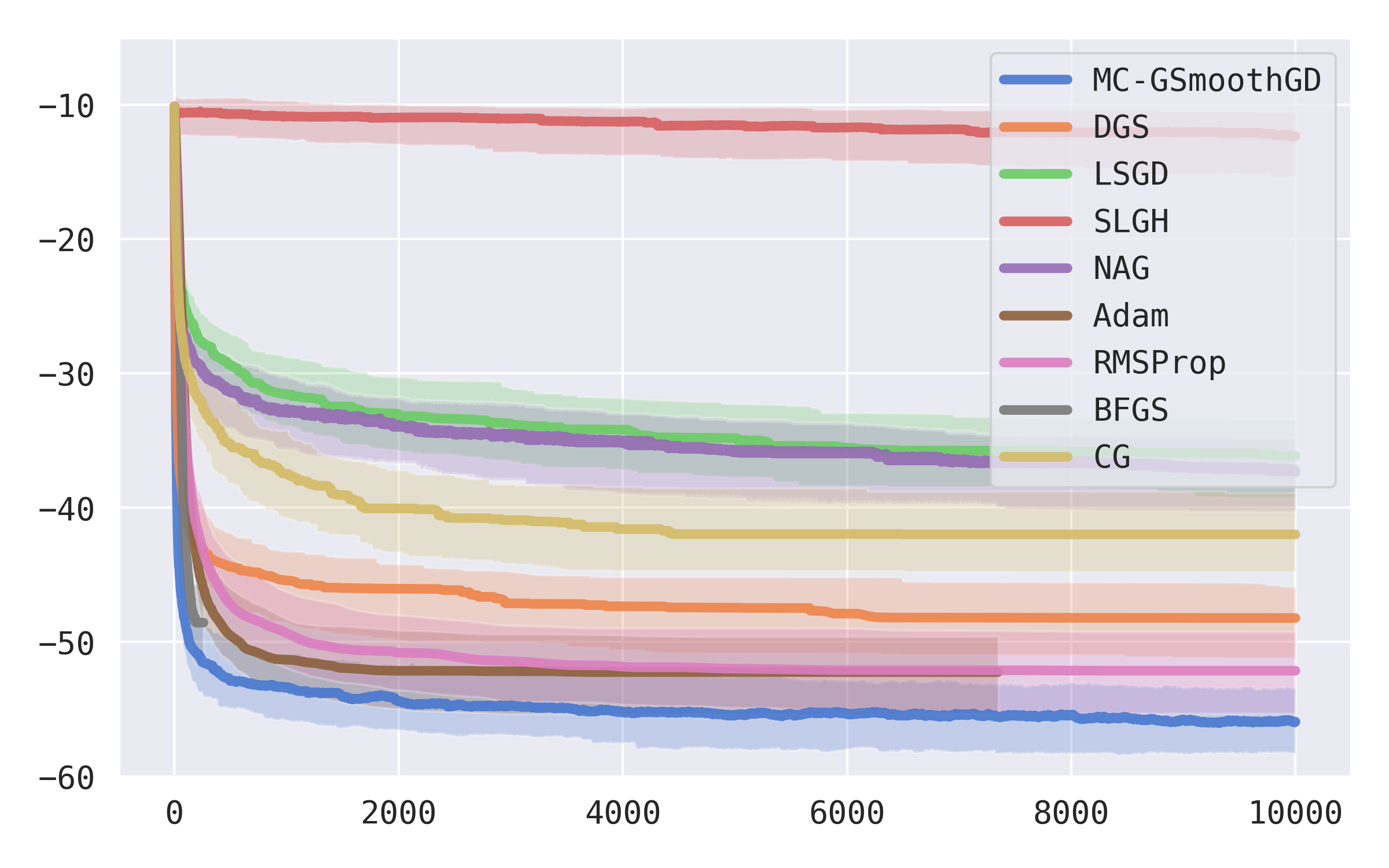}
        \includegraphics[width=.32\linewidth]{./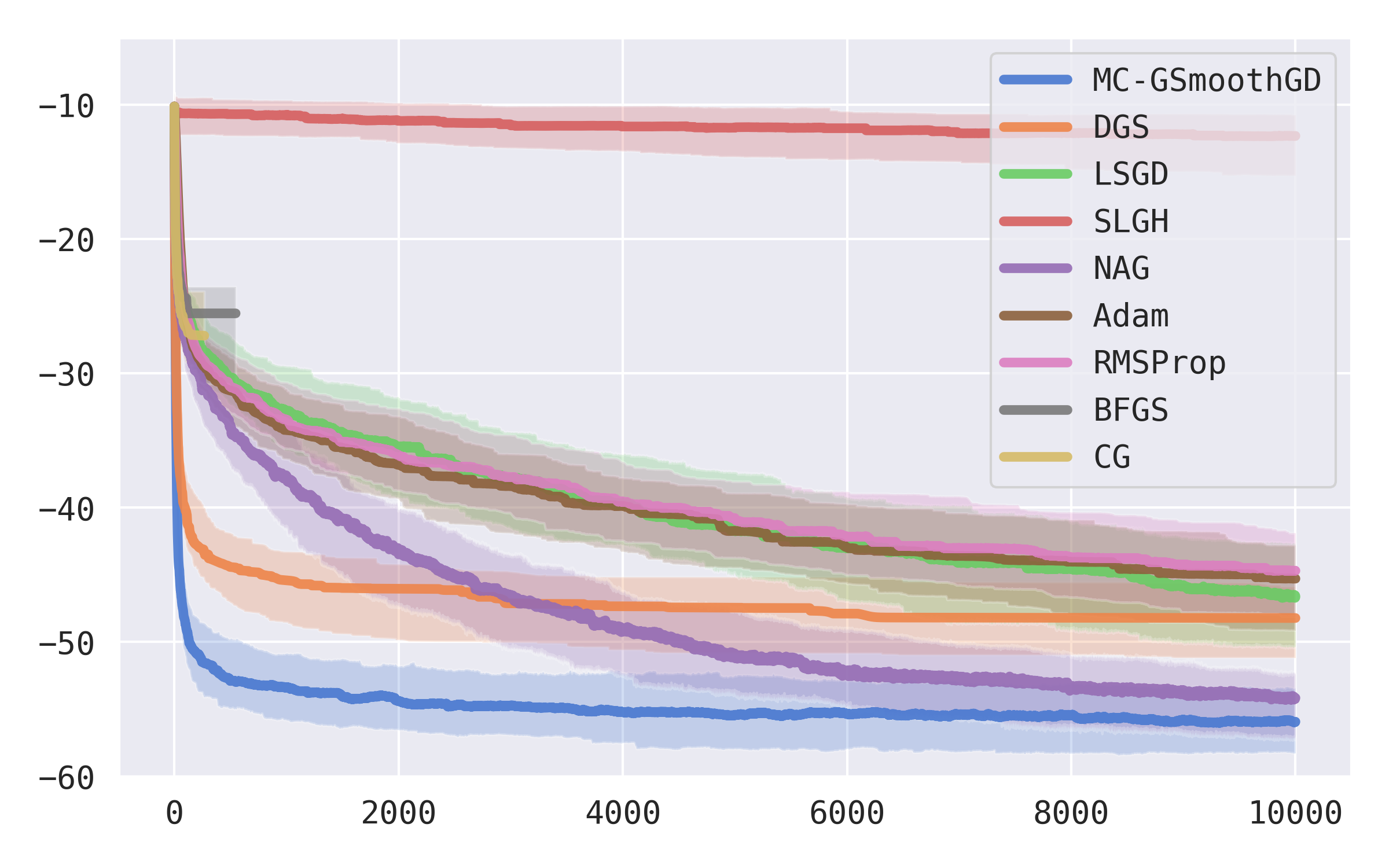}
        \includegraphics[width=.32\linewidth]{./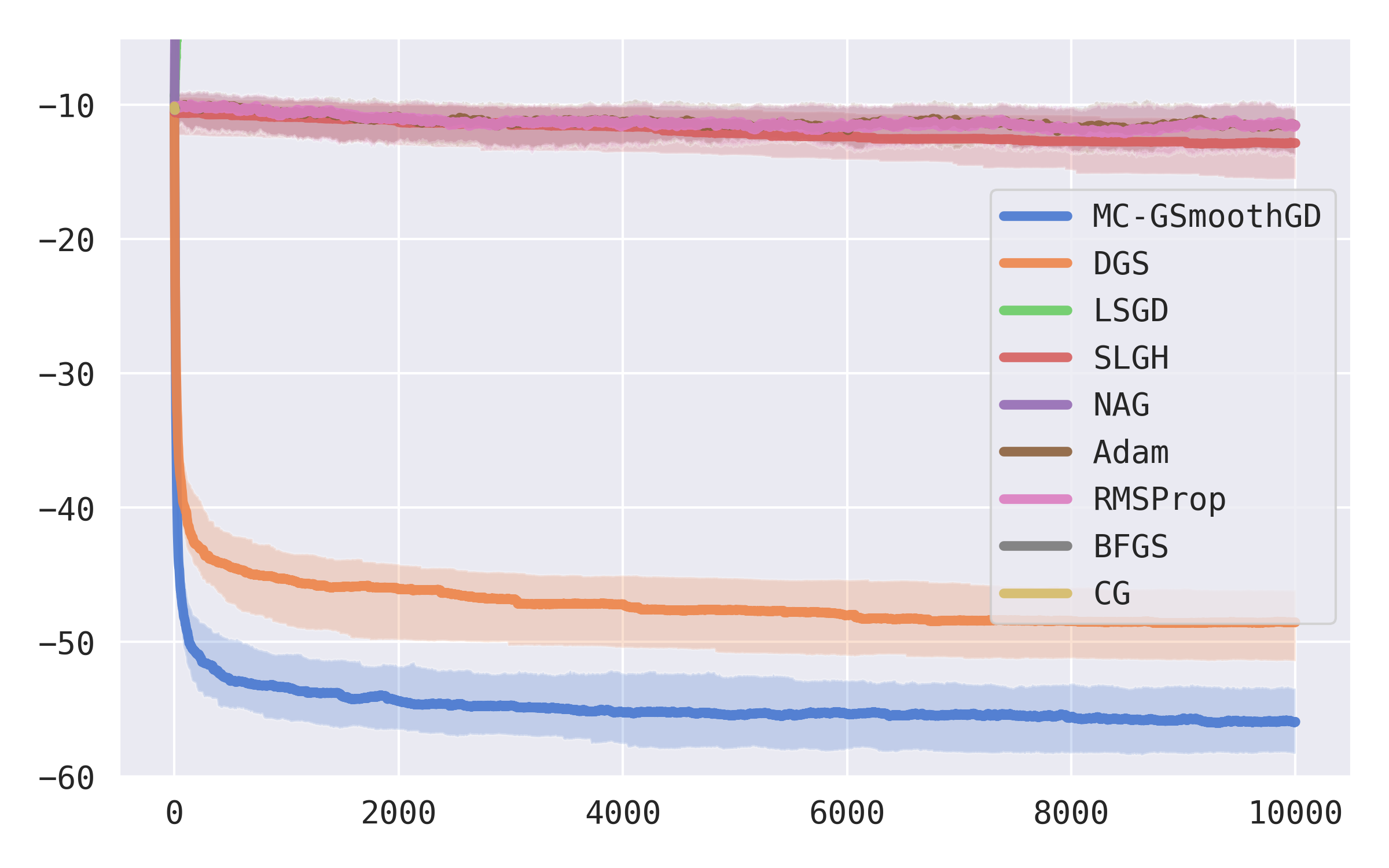}
        \caption{Michalewicz function}
    \end{subfigure}
    \\
    \begin{subfigure}{\linewidth}
        \includegraphics[width=.32\linewidth]{./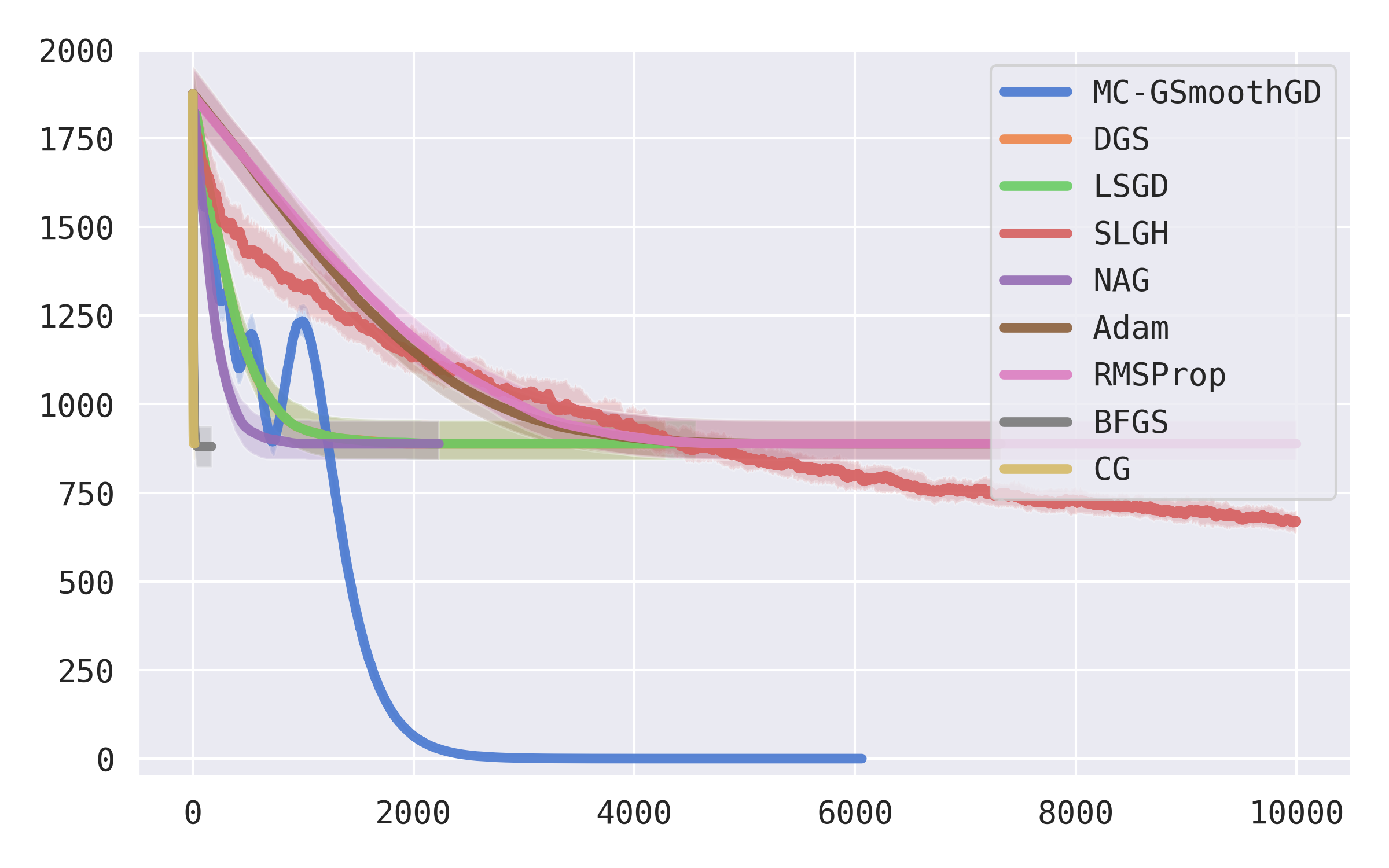}
        \includegraphics[width=.32\linewidth]{./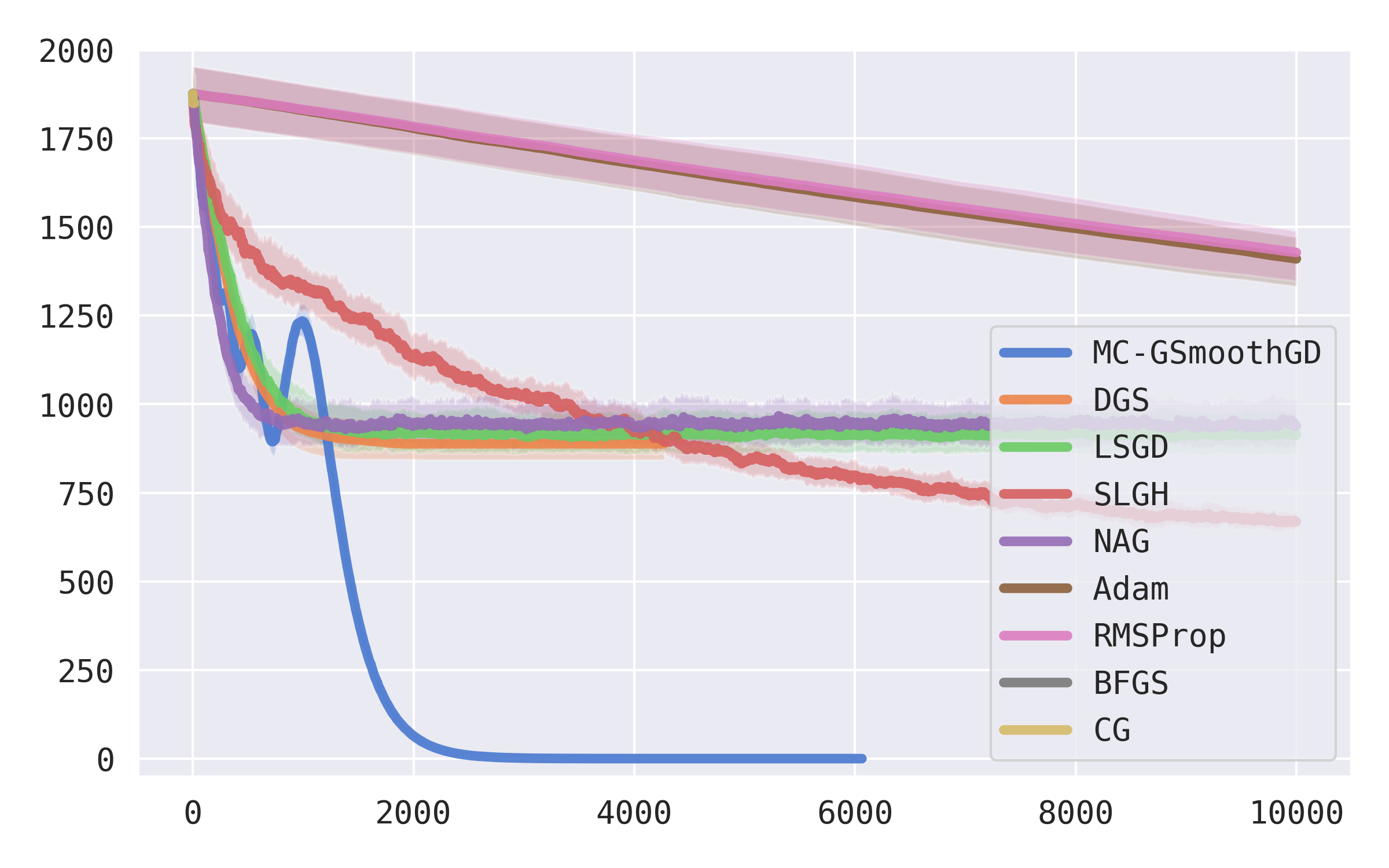}
        \includegraphics[width=.32\linewidth]{./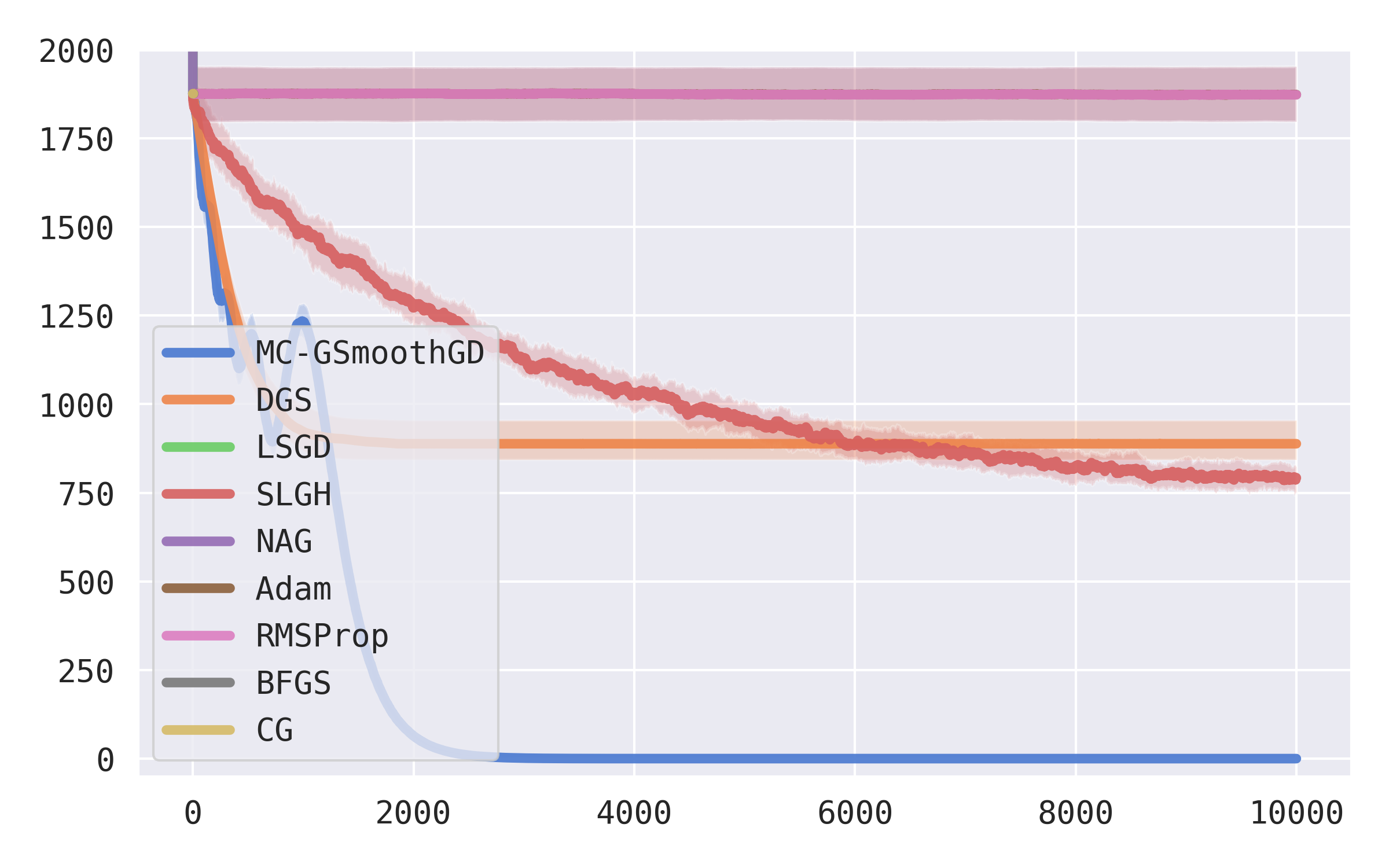}
        \caption{Rastrigin function}
    \end{subfigure}
    \\
    \begin{subfigure}{\linewidth}
        \includegraphics[width=.32\linewidth]{./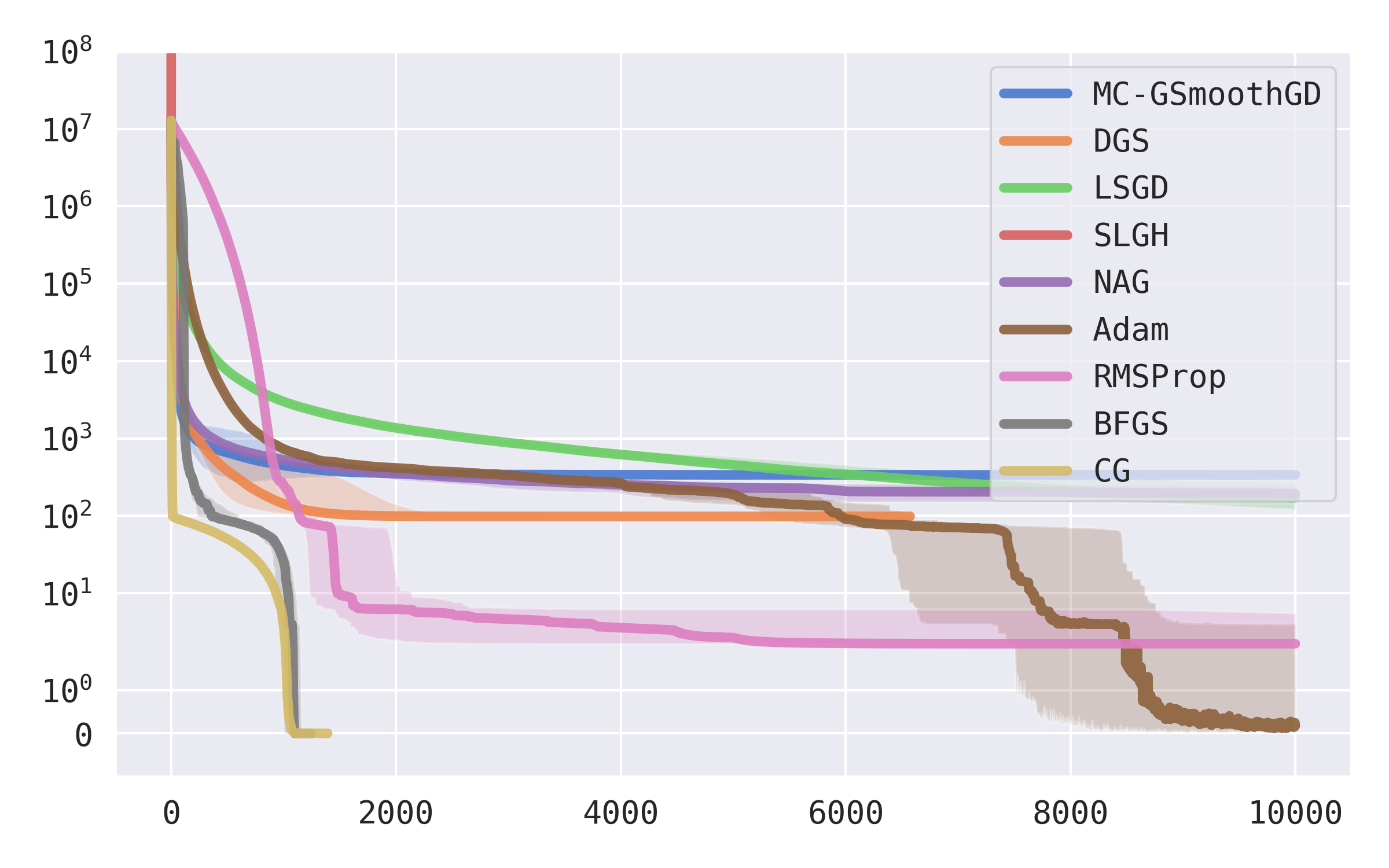}
        \includegraphics[width=.32\linewidth]{./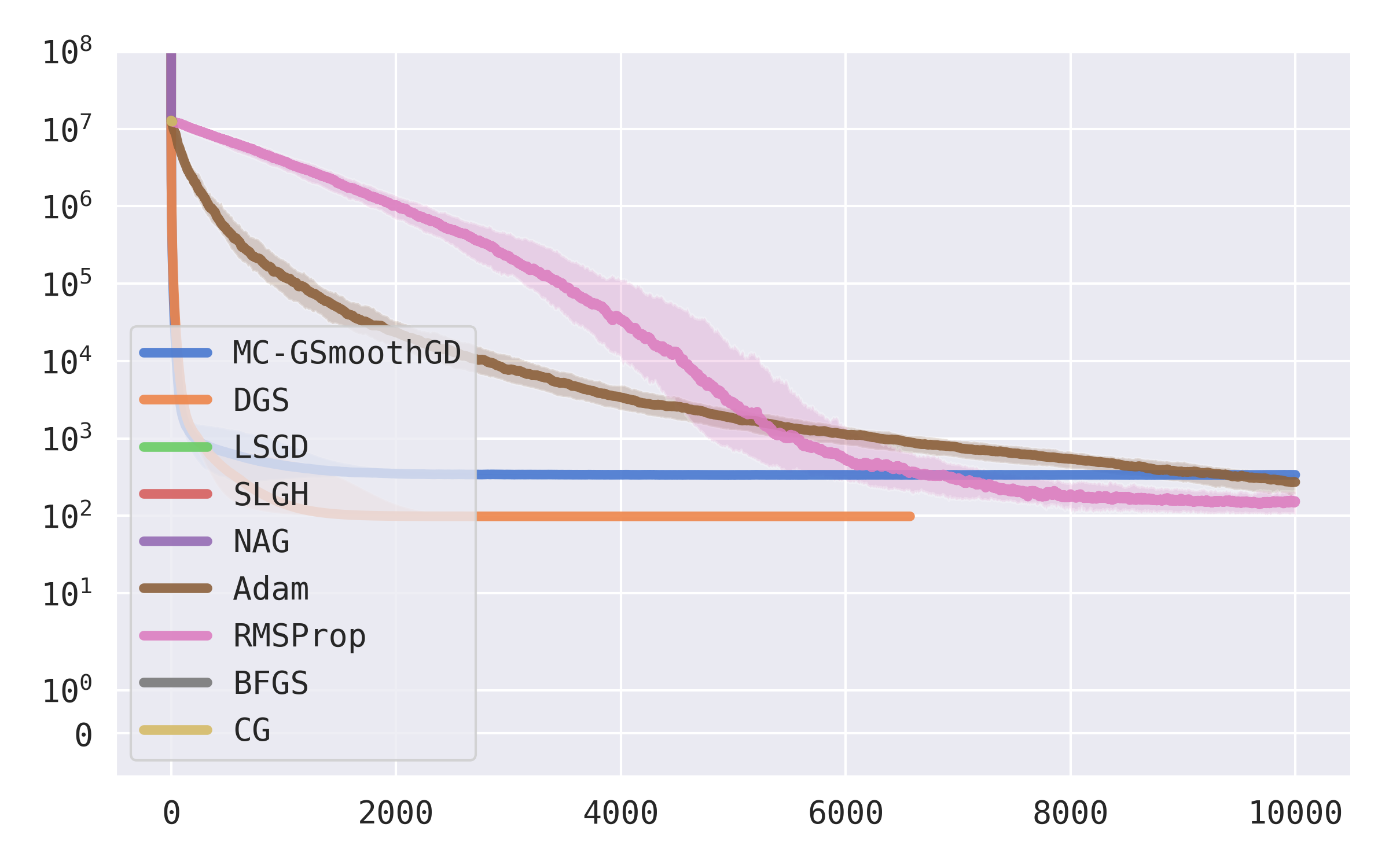}
        \includegraphics[width=.32\linewidth]{./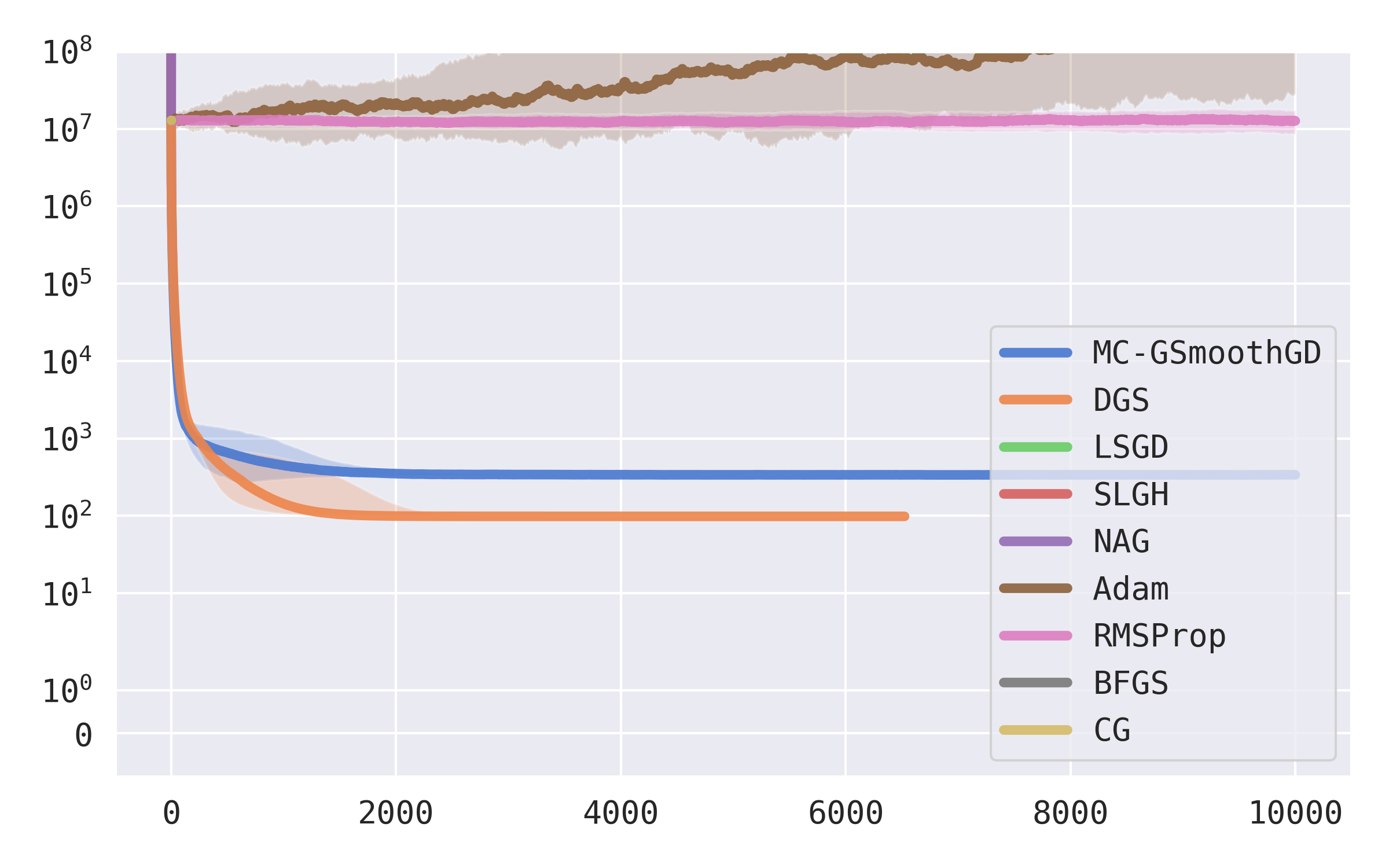}
        \caption{Rosenbrock function}
    \end{subfigure}
    \\
    \begin{subfigure}{\linewidth}
        \includegraphics[width=.32\linewidth]{./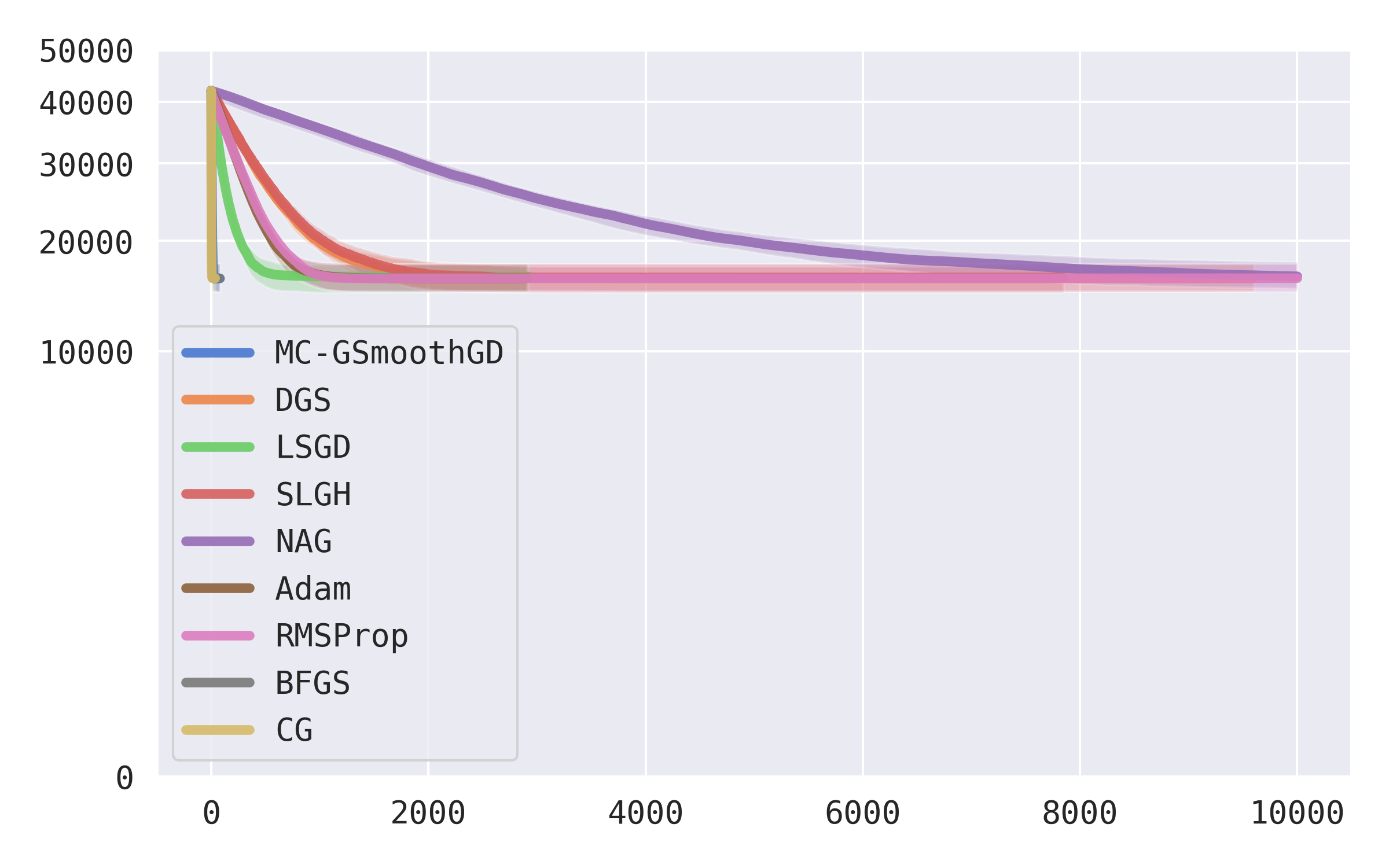}
        \includegraphics[width=.32\linewidth]{./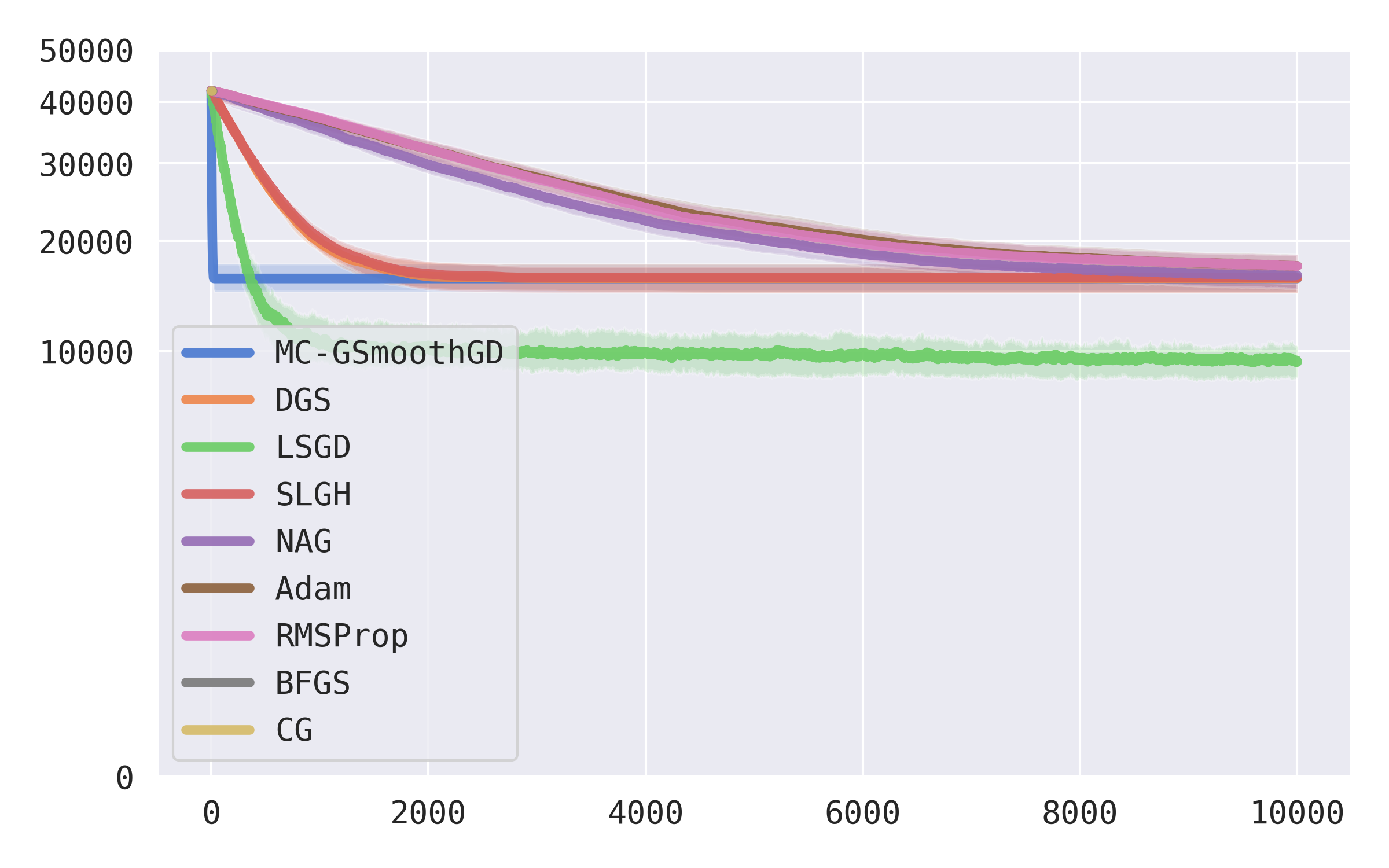}
        \includegraphics[width=.32\linewidth]{./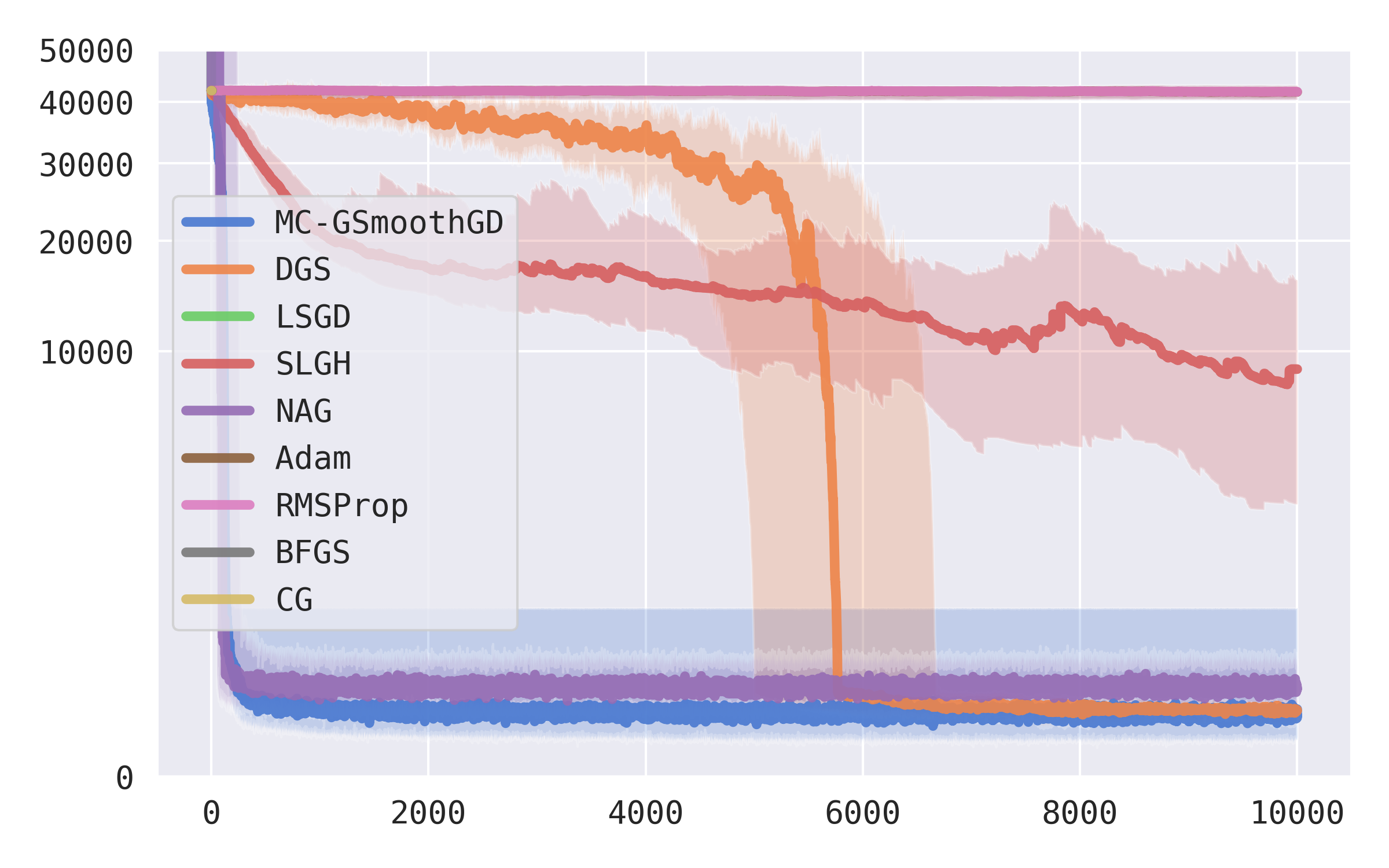}
        \caption{Schwefel function}
    \end{subfigure}
    \caption{Numerical experiments on $100$-dimensional test functions with various relative noise levels:
             no noise (left), $0.000001$\% (middle), and $0.01$\% (right).
             }
    \label{fig:numerics_100d_noise}
\end{figure}

\begin{table}
    \centering\small
    \begin{tabular}{lcccccc}
        \toprule
        & \multicolumn{6}{c}{Target function}
        \\\cmidrule(lr){2-7}
        Algorithm & Ackley & Levy & Michalewicz & Rastrigin & Rosenbrock & Schwefel
        \\\midrule
        MC-GSmoothGD & \texttt{1e-0} & \texttt{1e-1} & \texttt{1e-4} & \texttt{1e-3} & \texttt{1e-5} & \texttt{1e-0}
        \\
        DGS & \texttt{1e-1} & \texttt{1e-1} & \texttt{1e-4} & \texttt{1e-5} & \texttt{1e-5} & \texttt{1e-2}
        \\
        LSGD & \texttt{1e-1} & \texttt{1e-1} & \texttt{1e-5} & \texttt{1e-5} & \texttt{1e-5} & \texttt{1e-1}
        \\
        SLGH & \texttt{1e-0} & \texttt{1e-3} & \texttt{1e-6} & \texttt{1e-4} & \texttt{1e-5} & \texttt{1e-2}
        \\
        NAG & \texttt{1e-3} & \texttt{1e-4} & \texttt{1e-5} & \texttt{1e-5} & \texttt{1e-5} & \texttt{1e-3}
        \\
        Adam & \texttt{1e-4} & \texttt{1e-3} & \texttt{1e-4} & \texttt{1e-4} & \texttt{1e-1} & \texttt{1e-1}
        \\
        RMSProp & \texttt{1e-4} & \texttt{1e-3} & \texttt{1e-4} & \texttt{1e-4} & \texttt{1e-2} & \texttt{1e-1}
        \\\bottomrule
    \end{tabular}
    \caption{Values of the learning rate $\lambda$ for optimization algorithms on $100$-dimensional test functions.
             See Figure~\ref{fig:numerics_100d_noise} for the performance of the corresponding algorithms.}
    \label{tab:hyperparameters_100d_learning_rate}
\end{table}

\begin{table}
    \centering\small
    \begin{tabular}{lcccccc}
        \toprule
        & \multicolumn{6}{c}{Target function}
        \\\cmidrule(lr){2-7}
        Algorithm & Ackley & Levy & Michalewicz & Rastrigin & Rosenbrock & Schwefel
        \\\midrule
        MC-GSmoothGD & \texttt{1e-0} & \texttt{1e-2} & \texttt{1e-2} & \texttt{1e-0} & \texttt{1e-0} & \texttt{1e-3}
        \\
        DGS & \texttt{1e-1} & \texttt{1e-1} & \texttt{1e-2} & \texttt{1e-3} & \texttt{1e-0} & \texttt{1e-3}
        \\
        LSGD & \texttt{1e-3} & \texttt{1e-2} & \texttt{1e-1} & \texttt{1e-3} & \texttt{1e-0} & \texttt{1e-0}
        \\
        SLGH & \texttt{1e-0} & \texttt{1e-1} & \texttt{1e-3} & \texttt{1e-2} & \texttt{1e-0} & \texttt{1e-3}
        \\\bottomrule
    \end{tabular}
    \caption{Values of the smoothing parameter $\sigma$ for the smoothing-based optimization algorithms on $100$-dimensional test functions.}
    \label{tab:hyperparameters_100d_sigma}
\end{table}


\section{Conclusions}
\label{sec:conclusions}

This effort delves into the convergence analysis of a category of smoothing-based gradient descent methods when applied to high-dimensional non-convex optimization challenges. Gaussian smoothing is harnessed to define the nonlocal gradient, which significantly aids gradient descent in navigating away from local minima and enhances overall performance in non-convex optimization tasks. Furthermore, this work provides rigorous theoretical error estimates on the convergence rate of GSmoothGD iterates, taking into account the influence of function convexity, smoothness, input dimension, and the Gaussian smoothing radius.  To address curse of dimensionality, we numerically approximate the $d$-dimensional GSmoothGD nonlocal gradient using MC sampling, backed by a theory that demonstrates convergence regardless of function smoothness and dimensionality. To combat local minima's influence, the paper discusses strategies for updating the smoothing parameter, making global minima more attainable.
These methods, in their bid to alleviate high-frequency noise, minor fluctuations, and rapid variations in the computation of descent directions, replace conventional (local) gradients with (nonlocal) approximations that aim to preserve the overall large-scale structure or features of the loss landscape.
In our investigation of optimization techniques, we present empirical evidence that highlights the remarkable stability of smoothing-based optimization when subjected to external perturbations.
Our findings reveal that algorithms employing Gaussian smoothing mechanisms exhibit an extraordinary resilience to the presence of noise, a characteristic that holds significant practical utility in real-world applications.
Finally, future endeavors will focus on extending the theory and numerical approaches for Gaussian smoothing to improve other popular approaches for high-dimensional non-convex optimization, including stochastic gradient descent (GSmoothSGD), as well as stochastic variance reduced gradient (GSmoothSVRG).  These approaches aim at reducing noise and uncertainty in the gradient approximation and, thereby, enhancing the effectiveness of SGD as well as SVRG.

\clearpage
\appendix
\section{Proofs of background results}
\label{app:notation}
\subsection*{Proof of Lemma~\ref{lem:fsigmalsmoothandconvex}}
\label{app:fsigmalsmoothandconvex}
%
For the proofs of Lemma \ref{lem:fsigmalsmoothandconvex}(a) and (b) please see~\cite[p. 533]{nesterov2017random}.

For the first part of (c), the fact that $f_{\sigma}$ is $M$-Lipschitz is stated as the 2nd bullet on pg. 533 of \cite{nesterov2017random}.
For the second half,
\begin{align}
\begin{split}
    \|\nabla f_{\sigma}(\bmx)-\nabla f_{\sigma}(\bmy)\|
    &\leq\frac{2}{\sigma\pi^{\frac{d}{2}}}\int_{\mathbb{R}^d}|f(\bmx+\sigma \bmu)-f(\bmy+\sigma \bmu)|\|\bmu\|e^{-\|\bmu\|^2}\;d\bmu\\
    &\leq\frac{2M\|\bmx-\bmy\|}{\sigma\pi^{\frac{d}{2}}}\int_{\mathbb{R}^d}\|\bmu\|e^{-\|\bmu\|^2}\;d\bmu\\
    &=\frac{M\sqrt{2d}}{\sigma}\|\bmx-\bmy\|,
\end{split}
\end{align}
using Lemma~\ref{lem:gradient_goes_into_convolution} to pass the gradient into the integral.


\subsection*{Proof of Lemma~\ref{lem:fsigmagreaterthanf}}
\label{app:fsigmagreaterthanf}
%
For the proof of (a), 
let $\sigma>0$ and suppose $m=\inf_{\bmx\in\mathbb{R}^d}f(\bmx)>-\infty$.
Assume that there is some $\bmx_0\in\mathbb{R}^d$ so that $f_{\sigma}(\bmx_0)=m$. Then
\begin{equation}
    0
    = f_{\sigma}(\bmx_0)-m
    =\frac{1}{\pi^{\frac{d}{2}}}\int_{\mathbb{R}^d}\Big(f(\bmx_0+\sigma \bmu)-m\Big)e^{-\|\bmu\|^2}\;d\bmu.
\end{equation}
Since $(f(\bmx_0+\sigma \bmu)-m)e^{-\|\bmu\|^2}\geq 0$ and $e^{-\|\bmu\|^2}\not=0$, we have $f(\bmx_0+\sigma \bmu)-m=0$ for all $\bmu\in\mathbb{R}^d$.
Further for any $\bmx\in\mathbb{R}^d$ we can find a $\bmu\in\mathbb{R}^d$ so that $\bmx=\bmx_0+\sigma \bmu$, which shows $f(\bmx)=m$ for all $\bmx\in\mathbb{R}^d$.
Thus $f$ is constant. This same approach obviously holds for a maximizer of $f$ as well.

For the proof of (b),
since $f(\bmx+\sigma \bmu)-m\geq 0$, we have
\begin{equation}
    f_{\sigma}(\bmx)-m
    =\frac{1}{\pi^{\frac{d}{2}}}\int_{\mathbb{R}^d}\Big(f(\bmx+\sigma \bmu)-m\Big)e^{-\|\bmu\|^2}\;d\bmu
    \geq 0.
\end{equation}
This means $f_{\sigma}(\bmx)\geq m$. Repeating this for $M-f(\bmx+\sigma \bmu)\geq 0$, gives the other result.


\subsection*{Proof of Lemma~\ref{lem:chainofsmoothing}}
\label{app:chainofsmoothing}
We have that
\begin{align}
\begin{split}
    (f_{\sigma})_{\tau}(\bmx)
    &=\frac{1}{\pi^{\frac{d}{2}}}\int_{\mathbb{R}^d}\left(\frac{1}{\pi^{\frac{d}{2}}}\int_{\mathbb{R}^d}f((\bmx+\tau \bmv)+\sigma \bmu)e^{-\|\bmu\|^2}\;d\bmu\right)e^{-\|\bmv\|^2}\;d\bmv\\
    &=\frac{1}{\pi^d}\int_{\mathbb{R}^d}\int_{\mathbb{R}^d}f(\bmx+\tau \bmv+\sigma \bmu)e^{-(\|\bmu\|^2+\|\bmv\|^2)}\;d\bmu d\bmv.
\end{split}
\end{align}

Using the change of variables\footnote{We are free to use a change of variables since $f$ is bounded below.}  $\bms=\sigma \bmu + \tau \bmv$ and $ \bmt=\tau \bmu - \sigma \bmv$
we end up with
\begin{align}
\begin{split}
    (f_{\sigma})_{\tau}(\bmx)
    &=\frac{1}{\pi^d\eta^{2d}}\int_{\mathbb{R}^d}\int_{\mathbb{R}^d}f(\bmx+\bms)e^{-\frac{1}{\eta^2}(\|\bms\|^2+\|t\|^2)}\;d\bmt d\bms\\
    &=\frac{1}{\pi^{\frac{d}{2}}}\int_{\mathbb{R}^d}f(\bmx+\eta \bmw)e^{-\|\bmw\|^2}\;d\bmw\\
    &=f_{\eta}(\bmx),
\end{split}
\end{align}
which completes the proof.

\subsection*{Proof of Lemma~\ref{lem:differentsmoothingvalues}}
\label{app:differentsmoothingvalues}
First, let $f:\mathbb{R}^d\to\mathbb{R}$ be $L$-smooth.
From Lemma~\ref{lem:chainofsmoothing}, if $\eta = \sqrt{\tau^2-\sigma^2}$, then
$
    f_{\tau}(\bmx)=(f_{\sigma})_{\eta}(\bmx).
$
So,
\begin{align}
\begin{split}
    |f_{\tau}(\bmx)-f_{\sigma}(\bmx)|
    =|(f_{\sigma})_{\eta}(\bmx)-f_{\sigma}(\bmx)|.
\end{split}
\end{align}
As such, it suffices to show
\begin{equation}
    |f_{\sigma}(\bmx)-f(\bmx)|\leq\frac{\sigma^2Ld}{4}.
\end{equation}

To this end, note that
\begin{align}
\begin{split}
    f_{\sigma}(\bmx)-f(\bmx)
    &=
    \frac{1}{\pi^{\frac{d}{2}}}\int_{\mathbb{R}^d}\Big(f(\bmx+\sigma \bmu)-f(\bmx)-\langle\nabla f(\bmx),\sigma \bmu\rangle\Big)e^{-\|\bmu\|^2}d\bmu.
\end{split}
\end{align}
Therefore, since $f$ is $L$-smooth,
\begin{align}
\begin{split}
    |f_{\sigma}(\bmx)-f(\bmx)|
    &\leq \frac{1}{\pi^{\frac{d}{2}}}\int_{\mathbb{R}^d}\Big|f(\bmx+\sigma \bmu)-f(\bmx)-\langle\nabla f(\bmx),\sigma \bmu\rangle\Big|e^{-\|\bmu\|^2}d\bmu\\
    &\leq\frac{1}{\pi^{\frac{d}{2}}}\int_{\mathbb{R}^d}\frac{L}{2}\|\sigma \bmu\|^2e^{-\|\bmu\|^2}d\bmu\\
    &=\frac{\sigma^2L}{2\pi^{\frac{d}{2}}}\int_{\mathbb{R}^d}\|\bmu\|^2e^{-\|\bmu\|^2}d\bmu
    =\frac{\sigma^2L}{2\pi^{\frac{d}{2}}}\left(\frac{d\pi^{\frac{d}{2}}}{2}\right)
    =\frac{\sigma^2Ld}{4}.
\end{split}
\end{align}

To prove part (b), let $f$ be $M$-Lipschitz.
Then
\begin{equation}
    |f_{\tau}(\bmx)-f_{\sigma}(\bmx)|
    \leq\frac{1}{\pi^{\frac{d}{2}}}\int_{\mathbb{R}^d}|f(\bmx+\tau \bmu)-f(\bmx+\sigma \bmu)|e^{-\|\bmu\|^2}d\bmu
    \leq M|\tau-\sigma|\sqrt{\frac{d}{2}}.
\end{equation}

\subsection*{Proof of Corollary~\ref{cor:fminusfsigma}}
\label{app:lem:fminusfsigma}
Let $f:\mathbb{R}^d\to\mathbb{R}$.
By Lemma~\ref{lem:fsigmagreaterthanf}, we have that
$
    f_{\sigma}(\sstar)\geq f(\xstar).
$
%
Suppose $f$ is $L$-smooth and $\sigma\geq 0$. Since $\sstar$ minimizes $f_{\sigma}$ then using Lemma~\ref{lem:differentsmoothingvalues}, we have
\begin{align}
    f_{\sigma}(\sstar)-f(\xstar)\leq f_{\sigma}(\xstar)-f(\xstar)\leq\frac{\sigma^2Ld}{4}.
\end{align}
If $f$ is $M$-Lipschitz instead, Lemma~\ref{lem:differentsmoothingvalues} again gives
\begin{align}
    f_{\sigma}(\sstar)-f(\xstar)\leq f_{\sigma}(\xstar)-f(\xstar)\leq M\sigma\sqrt{\frac{d}{2}}.
\end{align}

\subsection*{Proof of Lemma~\ref{lem:finl1}}
\label{app:lem:finl1}
\begin{proof}
Suppose $f$ is bounded below by $m$.
Then $0\leq f(\bmx)-m$ for all $\bmx\in\mathbb{R}$.
This means
\begin{align}
\begin{split}
    \frac{1}{\pi^{\nicefrac{d}{2}}}\int_{\mathbb{R}^d}|f(\bmx+\sigma\bmu)|e^{-\|u\|^2}\;d\bmu
    &=\frac{1}{\pi^{\nicefrac{d}{2}}}\int_{\mathbb{R}^d}|f(\bmx+\sigma\bmu)-m+m|e^{-\|u\|^2}\;d\bmu\\
    &\leq\frac{1}{\pi^{\nicefrac{d}{2}}}\int_{\mathbb{R}^d}\big(|f(\bmx+\sigma\bmu)-m|+|m|\big)e^{-\|u\|^2}\;d\bmu\\
    &=\frac{1}{\pi^{\nicefrac{d}{2}}}\int_{\mathbb{R}^d}\big(f(\bmx+\sigma\bmu)-m\big)e^{-\|u\|^2}\;d\bmu + |m|\\
    &=f_{\sigma}(\bmx)-m+|m|,
\end{split}
\end{align}
which is a real number for any $\bmx\in\mathbb{R}^d$.
Thus, $f\in L^1(\mathbb{R}^d,k_{\sigma})$.
\end{proof}

\subsection*{Proof of Lemma~\ref{lem:gradient_goes_into_convolution}}
\label{app:lem:gradient_goes_into_convolution}
\begin{proof}
We have that
\begin{align}
\begin{split}
    \frac{f_{\sigma}(\bmx+he_i)-f_{\sigma}(\bmx)}{h}
    &=\int_{\mathbb{R}^d}f(\bmx+\sigma\bmu)\left(\frac{e^{-\|\bmu-\frac{h}{\sigma}e_i\|^2}-e^{-\|\bmu\|^2}}{h}\right)\;d\bmu\\
    &=\frac{-2}{\sigma^2}\int_{\mathbb{R}^d}f(\bmx+\sigma\bmu)(x_i+ch)e^{-\left(x_i+\frac{ch}{\sigma}\right)^2}e^{-\sum_{j\not= i}x_j^2}\;d\bmu
\end{split}
\end{align}
for some $c\in(0,1)$ by the MVT.
Now we will show that we can switch the limit and the integral.
Note that
\begin{align}
    (x_i+ch)e^{-\nicefrac{(x_i+ch)^2}{\sigma^2}}e^{-\nicefrac{x_i^2}{\tau^2}}
    &=(x_i+ch)e^{\left(\frac{\sigma^2-\tau^2}{\sigma^2\tau^2}\right)\left(x-\frac{h\sigma^2\tau^2}{\sigma^2-\tau^2}\right)^2}e^{\frac{h}{\sigma^2}\left(1+\frac{\tau^2}{\tau^2-\sigma^2}\right)}.
\end{align}
So, for $\tau > \sigma$, since
\begin{align}
    \lim_{|x_i|\to\infty}(x_i+ch)e^{\left(\frac{\sigma^2-\tau^2}{\sigma^2\tau^2}\right)\left(x-\frac{h\sigma^2\tau^2}{\sigma^2-\tau^2}\right)^2}
    =0
\end{align}
and it is continuous, there exists $M>0$ so that
\begin{align}
    \left|\frac{2}{\sigma^2}(x_i+ch)e^{\left(\frac{\sigma^2-\tau^2}{\sigma^2\tau^2}\right)\left(x-\frac{h\sigma^2\tau^2}{\sigma^2-\tau^2}\right)^2}e^{\frac{h}{\sigma^2}\left(1+\frac{\tau^2}{\tau^2-\sigma^2}\right)}\right|
    \leq M.
\end{align}
Rearranging, we have that
\begin{equation}
    \left|\frac{2}{\sigma^2}(x_i+ch)e^{-\nicefrac{(x_i+ch)^2}{\sigma^2}}\right|
    \leq Me^{-\nicefrac{x_i^2}{\tau^2}}.
\end{equation}
So,
\begin{align}
    \left|f(\bmx+\sigma\bmu)(x_i+ch)e^{-\nicefrac{(x_i+ch)^2}{\sigma^2}}e^{-\sum_{j\not= i}\nicefrac{x_j^2}{\sigma^2}}\right|
    \leq M|f(\bmx+\sigma\bmu)|e^{-\nicefrac{\|\bmx\|^2}{\tau^2}}.
\end{align}
Since $Mf\in L^1(\mathbb{R}^d,k_{\tau})$ (by Lemma~\ref{lem:finl1}), we can use the dominated convergence theorem to show
\begin{align}
\begin{split}
    \lim_{h\to 0}\frac{f_{\sigma}(\bmx+he_i)-f_{\sigma}(\bmx)}{h}
    &=\int_{\mathbb{R}^d}\lim_{h\to 0}\left(f(\bmx+\sigma\bmu)(x_i+ch)e^{-\nicefrac{(x_i+ch)^2}{\sigma^2}}e^{-\sum_{j\not= i}\nicefrac{x_j^2}{\sigma^2}}\right)d\bmu\\
    &=\int_{\mathbb{R}^d}x_if(\bmx+\sigma\bmu)e^{-\nicefrac{\|x\|^2}{\sigma^2}}\;d\bmu\\
    &=\left(f\star\frac{\partial k_{\sigma}}{\partial x_i}\right)(\bmx).
\end{split}
\end{align}
Additionally, we have that
\begin{align}
\begin{split}
\label{eqn:gradient_in_integral_change_of_variables}
    &\lim_{h\to 0}\frac{f_{\sigma}(\bmx+he_i)-f_{\sigma}(\bmx)}{h}\\
    &\qquad=\int_{\mathbb{R}^d}\lim_{h\to 0}f(\bmx+\sigma\bmu)\left(\frac{e^{-\|u-\frac{h}{\sigma}e_i\|^2}-e^{-\|u\|^2}}{h}\right)\;d\bmu\\
    &\qquad=\frac{1}{h}\left(
        \int_{\mathbb{R}^d}\lim_{h\to 0}f(\bmx+\sigma\bmu)e^{-\|u-\frac{h}{\sigma}e_i\|^2}\;d\bmu
        -\int_{\mathbb{R}^d}\lim_{h\to 0}f(\bmx+\sigma\bmu)e^{-\|u\|^2}\;d\bmu
    \right)\\
    &\qquad=\frac{1}{h}\left(
        \int_{\mathbb{R}^d}\lim_{h\to 0}f(\bmx+he_i+\sigma\bmu)e^{-\|u\|^2}\;d\bmu
        -\int_{\mathbb{R}^d}\lim_{h\to 0}f(\bmx+\sigma\bmu)e^{-\|u\|^2}\;d\bmu
    \right)\\
    &\qquad=\int_{\mathbb{R}^d}\lim_{h\to 0}\frac{f(\bmx+he_i+\sigma\bmu)-f(\bmx+\sigma\bmu)}{h}e^{-\|u\|^2}\;d\bmu\\
    &\qquad=\left(\frac{\partial f}{\partial x_i}\star k_{\sigma}\right)(\bmx).
\end{split}
\end{align}
This shows that for and $i$
\begin{equation}
    \frac{\partial f_{\sigma}}{\partial x_i}(\bmx)
    =\left(\frac{\partial f}{\partial x_i}\star k_{\sigma}\right)(\bmx)
    =\left(f\star \frac{\partial k_{\sigma}}{\partial x_i}\right)(\bmx).
\end{equation}
\end{proof}

\begin{rem}
\label{rem:l-smooth_functions_satisfy_heat_eqn}
A very similar proof shows that
\begin{equation}
    \frac{\partial}{\partial \sigma}f_{\sigma}(\bmx)
    =\left(f\star\frac{\partial k_{\sigma}}{\partial\sigma}\right)(\bmx).
\end{equation}
This exact same proof justifies
\begin{equation}
    \frac{\partial^2}{\partial x_i^2}f_{\sigma}(\bmx)
    =\frac{\partial}{\partial x_i}\left(\frac{\partial f}{\partial x_i}\star k_{\sigma}\right)(\bmx)
    =\left(\frac{\partial^2 f}{\partial x_i^2}\star k_{\sigma}\right)(\bmx)
\end{equation}
for $L$-smooth functions, since $\frac{\partial}{\partial x_i}f$ is $L$-Lipschitz.
Then using the same change of variables in (\ref{eqn:gradient_in_integral_change_of_variables}) twice, we can switch the second derivative to the Gaussian.
This shows that for $L$-smooth functions, $f\star k_{\sigma}$ satisfies the heat equation (\ref{eqn:smoothing_heateqn}).
\end{rem}

\section{Proofs of main convergence results}
\label{app:convergence}

\subsection*{Proof of Theorem~\ref{thm:GSmoothGD_convex}}
\label{app:GSmoothGD_convex}
Suppose $f$ is convex and $L$-smooth.
Let $L^1_k=L$ for all $k\in\mathbb{N}$.
By Lemma \ref{lem:fsigmalsmoothandconvex}, $f_{\sigma_k}$ is $L^1_k$-smooth for each $k\in\mathbb{N}$.
As such, we have
\begin{align}
\begin{split}\label{eqn:quickproof2}
    f_{\sigma_{k+1}}(\bmx_{k+1})
    &\leq f_{\sigma_{k+1}}(\bmx_{k})+\langle\nabla f_{\sigma_{k+1}}(\bmx_{k}),\bmx_{k+1}-\bmx_{k}\rangle +\frac{1}{2}L^1_{k+1}\|\bmx_{k+1}-\bmx_{k}\|^2\\
    &=f_{\sigma_{k+1}}(\bmx_{k})-t\|\nabla f_{\sigma_{k+1}}(\bmx_{k})\|^2+\frac{1}{2}L^1_{k+1}t^2\|\nabla f_{\sigma_{k+1}}(\bmx_{k})\|^2\\
    &\leq f_{\sigma_{k+1}}(\bmx_{k})-\frac{1}{2}t\|\nabla f_{\sigma_{k+1}}(\bmx_{k})\|^2
\end{split}
\end{align}
We can combine (\ref{eqn:quickproof2}) with Lemma~\ref{lem:differentsmoothingvalues} to see that
\begin{equation}\label{eqn:quickproof4}
    f_{\sigma_k}(\bmx_{k})
    \leq f_{\sigma_k}(\bmx_{k-1})
    \leq f_{\sigma_{k-1}}(\bmx_{k-1})+L_k^2
\end{equation}
where $L_k^2=\frac{Ld}{4}\max(0,\sigma_k^2-\sigma_{k-1}^2)$ is the bound from Lemma~\ref{lem:differentsmoothingvalues}.

Using Lemma \ref{lem:fsigmalsmoothandconvex}, as $f$ is convex, so is $f_{\sigma_k}$ for every $k$. This means for any $\bmx$
\begin{equation}
    f_{\sigma_k}(\xstar)\geq f_{\sigma_k}(\bmx)+\langle\nabla f_{\sigma_k}(\bmx),\xstar-\bmx\rangle,
\end{equation}
which can be rewritten as
\begin{equation}\label{eqn:quickproof1}
    f_{\sigma_k}(\bmx)\leq f_{\sigma_k}(\xstar)+\langle\nabla f_{\sigma_k}(\bmx),\bmx-\xstar\rangle.
\end{equation}

Define $L^3_k$ as $\frac{Ld\sigma_k^2}{4}$, which is from Lemma~\ref{lem:differentsmoothingvalues}.
Then, combining the above, we have the following: 
\begin{align}
\begin{split}\label{eqn:quickproof3}
    0
    &\stackrel{\text{Lem \ref{lem:fsigmagreaterthanf}}}{\leq}
    f_{\sigma_{k+1}}(\bmx_{k+1})-f(\xstar)\\
    &=f_{\sigma_{k+1}}(\bmx_{k+1})-f_{\sigma_{k+1}}(\xstar)+f_{\sigma_{k+1}}(\xstar)-f(\xstar)\\
    &\stackrel{\text{Lem \ref{lem:differentsmoothingvalues}}}{\leq}
    f_{\sigma_{k+1}}(\bmx_{k+1})-f_{\sigma_{k+1}}(\xstar)+L_{k+1}^3\\
    &\stackrel{(\ref{eqn:quickproof2})}{\leq}
    f_{\sigma_{k+1}}(\bmx_{k})-\frac{1}{2}t\|\nabla f_{\sigma_{k+1}}(\bmx_{k})\|^2-f_{\sigma_{k+1}}(\xstar)+L_{k+1}^3\\
    &\stackrel{(\ref{eqn:quickproof1})}{\leq}
    \langle\nabla f_{\sigma_{k+1}}(\bmx_{k}),\bmx_{k}-\xstar\rangle-\frac{1}{2}t\|\nabla f_{\sigma_{k+1}}(\bmx_{k})\|^2+L_{k+1}^3.
\end{split}
\end{align}
Now, repeating the computation done in the proof of gradient descent, we have the following:
\begin{align}
\begin{split}\label{eqn:quickproof4again}
    f_{\sigma_{k+1}}(\bmx_{k+1})-f(\xstar)
    &\leq\langle\nabla f_{\sigma_{k+1}}(\bmx_{k}),\bmx_{k}-\xstar\rangle-\frac{1}{2}t\|\nabla f_{\sigma_{k+1}}(\bmx_{k})\|^2+L_{k+1}^3\\
    &=\frac{1}{2t}\left(\|\bmx_{k}-\xstar\|^2-\|\bmx_{k+1}-\xstar\|^2\right)+L_{k+1}^3
\end{split}
\end{align}

Summing over the steps:
\begin{align}
\begin{split}\label{eqn:quickproof5}
    \sum_{i=0}^{k-1}\left(f_{\sigma_{i+1}}(\bmx_{i+1})-f(\xstar)\right)
    &\stackrel{(\ref{eqn:quickproof4})}{\leq}\frac{1}{2t^*}\sum_{i=0}^{k-1}\left(\|\bmx_i-\xstar\|^2-\|\bmx_{i+1}-\xstar\|^2\right) + \sum_{i=0}^{k-1}L_{i+1}^3\\
    &\leq\frac{1}{2t}\|\bmx_0-\xstar\|^2 + \sum_{i=0}^{k-1}L_{i+1}^3\\
\end{split}
\end{align}
By (\ref{eqn:quickproof4}), for $k>i\geq 1$ we have
\begin{equation}
    f_{\sigma_k}(\bmx_{k})
    \leq f_{\sigma_{k-1}}(\bmx_{k-1}) + L_k^2
    \leq\cdots
    \leq f_{\sigma_{k-i}}(\bmx_{k-i}) + \sum_{j=1}^{i}L_{k-j+1}^2.
\end{equation}
This means we have (where $t_k^*=t$)
\begin{align}
\begin{split}\label{eqn:quickproof6}
    k\left(f_{\sigma_{k}}(\bmx_{k})-f(\xstar)\right)
    &=(f_{\sigma_{k}}(\bmx_{k})-f(\xstar))+\sum_{i=1}^{k-1}(f_{\sigma_{k}}(\bmx_{k})-f(\xstar))\\
    &=\sum_{i=1}^{k}(f_{\sigma_{i}}(\bmx_i)-f(\xstar))+\sum_{i=1}^{k-1}iL_{i+1}^2\\
    &\leq\frac{1}{2t_k^*}\|\bmx_0-\xstar\|^2 + \sum_{i=0}^{k-1}L_{i+1}^3+\frac{Ld}{4}\sum_{i=1}^{k-1}iL_{i+1}^2.
\end{split}
\end{align}

Finally, applying Lemma~\ref{lem:fsigmagreaterthanf} and dividing by $k$,
\begin{equation}\label{eqn:quickproof7}
    f(\bmx_{k})-f(\xstar)\leq
    f_{\sigma_{k}}(\bmx_{k})-f(\xstar)\leq
    \frac{1}{2t_k^*k}\|\bmx_0-\xstar\|^2+\frac{1}{k}\left(
    \sum_{i=1}^{k}L_i^3
    +\sum_{i=2}^{k}iL_i^2
    \right),
\end{equation}
which completes the proof.

\begin{rem}[$f$ is convex and $M$-Lipschitz]
The above proof holds for $M$-Lipschitz instead of $L$-smooth with the following modifications:
\begin{itemize}
    \item[(i)] 
    replace $t$ with $t_{k+1}$ in equations (\ref{eqn:quickproof2}), (\ref{eqn:quickproof3}), and (\ref{eqn:quickproof4again});
    \item[(ii)] 
    define $t_k^*=\min\{t_i|i\in\{1,...,k\}\}$ and replace $t$ with $t_k^*$ in equations (\ref{eqn:quickproof5}), (\ref{eqn:quickproof6}), and (\ref{eqn:quickproof7});
    \item[(iii)] 
    Define $L^1_k=\frac{M\sqrt{2d}}{\sigma_k}$, 
    $L^2_k=M\sqrt{(\frac{d}{2})\max(0,\sigma_k^2-\sigma_{k-1}^2)}$, and 
    $L^3_k=M\sigma_k\sqrt{\frac{d}{2}}$; and 
    \item[(iv)] 
    if $f$ is not differentiable at $\bmx_{k}$, then we require $f_{\sigma_{k}}(\bmx_{k})-f(\xstar)$ on the left-hand-side of \eqref{eq:f1}.
\end{itemize}
\end{rem}
%

\subsection*{Proof of Theorem~\ref{thm:GSmoothGD_non-convex}}
\label{app:GSmoothGD_non-convex}
From the proof of Theorem~\ref{thm:GSmoothGD_convex}, since $f$ is $L$-smooth (regardless of convexity)
\begin{equation}
    f_{\sigma_{k+1}}(\bmx_{k+1})
    \leq f_{\sigma_{k+1}}(\bmx_{k})-\frac{t}{2}\|\nabla f_{\sigma_{k+1}}(\bmx_{k})\|^2.
\end{equation}
As expected, this means $f_{\sigma_{k+1}}(\bmx_{k})\geq f_{\sigma_{k+1}}(\bmx_{k+1})$.

For $k\geq 1$,
\begin{align}
\begin{split}\label{eqn:ncgdwss1}
    \|\nabla f_{\sigma_{k+1}}(\bmx_{k})\|^2
    &\leq\frac{2}{t}\Big(f_{\sigma_{k+1}}(\bmx_{k})-f_{\sigma_{k+1}}(\bmx_{k+1})\Big)\\
    &=
    \frac{2}{t}\Big(f_{\sigma_{k}}(\bmx_{k})-f_{\sigma_{k+1}}(\bmx_{k+1})\Big) + \frac{Ld}{2t}|\sigma_{k+1}^2-\sigma_k^2|.
\end{split}
\end{align}

This means that
\begin{align}
\begin{split}
    &k\min_{i=1,...,k}\|\nabla f(\bmx_i)\|^2\\
    &\qquad\leq\sum_{i=1}^{k}\|\nabla f(\bmx_i)\|^2\\
    &\qquad\leq2\sum_{i=1}^{k}\|\nabla f_{\sigma_{i+1}}(\bmx_i)\|^2+\frac{L^2(6+d)^3}{4}\sum_{i=1}^{k}\sigma_{i+1}^2\\
    &\qquad\leq
        \frac{4}{t}\sum_{i=1}^{k}\big(f_{\sigma_i}(\bmx_i)-f_{\sigma_{i+1}}(\bmx_{i+1})\big)
        +\frac{Ld}{2t}\sum_{i=1}^{k}|\sigma_{i+1}^2-\sigma_i^2|
        +\frac{L^2(6+d)^3}{4}\sum_{i=1}^{k}\sigma_{i+1}^2
\end{split}
\end{align}
using Lemma 4 from \cite{nesterov2017random} on the second inequality.
Dividing both sides by $k$ gives the result.

\subsection*{Proof of Proposition \ref{prop:complexity}}
\label{app:complexity}
Let $f$ be $L$-smooth, so that $f_{\sigma}$ is smooth for any $\sigma\geq 0$.
We assume nothing about the convexity of $f$.
From the proof of Theorem~\ref{thm:GSmoothGD_non-convex}, we have that
\begin{align}
\label{eqn:proofofpropcomplexity}
    \frac{1}{k}\sum_{i=1}^{k}\|\nabla f(\bmx_i)\|^2
    &\leq\frac{1}{k}\left[
        \frac{4}{t}\big(f(\bmx_0)-f(\xstar)\big)
        +\left(\frac{Ld}{t}+\frac{L^2(6+d)^3}{4}\right)\sum_{i=1}^{k+1}\sigma_{i}^2
    \right]\\
    &=O\left(\frac{1+d^3\sum_{i=1}^{k+1}\sigma_i^2}{k}\right).
\end{align}
This means for at least some $j$, $\|\nabla f(x_j)\|^2$ is at most the LHS of (\ref{eqn:proofofpropcomplexity}).
Hence, if $(\sigma_i)_{i=1}^{\infty}\in l^2(\mathbb{R}^+)$ and we want $\|\nabla f(x_j)\|<\epsilon$, we need at least
\begin{equation}
    \frac{1}{\epsilon^2}\left[
        \frac{4}{t}\big(f(\bmx_0)-f(\xstar)\big)
        +\left(\frac{Ld}{t}+\frac{L^2(6+d)^3}{4}\right)\sum_{i=1}^{k+1}\sigma_{i}^2
    \right]
    =O\left(\frac{1+d^3}{\epsilon^2}\right)
\end{equation}
iterations, which completes the proof.

\subsection*{Proof of Lemma~\ref{lem:varianceofmcGSmoothGD}}
\label{app:varianceofmcGSmoothGD}
First, we have
\begin{equation}
    \text{Var}(\bmg_{\sigma}(\bmx;N))
    =\frac{1}{N^2}\text{Var}\left(\sum_{n=1}^{N}\delta_{\sigma}(\bmx;\bmu_n)\bmu_n\right)
    =\frac{1}{N}\text{Var}(\bmg_{\sigma}(\bmx;1)).
\end{equation}
Second, since $E(\bmg_{\sigma}(\bmx;1))=\nabla f_{\sigma}(\bmx)$,
\begin{align}
\begin{split}
    \text{Var}(\bmg_{\sigma}(\bmx;1))
    &=E\Big(\big(\bmg_{\sigma}(\bmx;1)-\nabla f_{\sigma}(\bmx))\big)\big(\bmg_{\sigma}(\bmx;1)-\nabla f_{\sigma}(\bmx))\big)^T\Big)\\
    &=E\big(\bmg_{\sigma}(\bmx;1)\bmg_{\sigma}(\bmx;1)^T\big)-\nabla f_{\sigma}(\bmx)\nabla f_{\sigma}(\bmx)^T\\
    &=E\big(\delta_{\sigma}(\bmx;\bmu_1)^2\bmu_1\bmu_1^T\big)-\nabla f_{\sigma}(\bmx)\nabla f_{\sigma}(\bmx)^T.
\end{split}
\end{align}
So,
\begin{equation}
    \text{tr}(\text{Var}(\bmg_{\sigma}(\bmx;1)))
    =E\big(\delta_{\sigma}(\bmx;\bmu_1)^2\|\bmu_1\|^2\big)-\|\nabla f_{\sigma}(\bmx)\|^2
\end{equation}

Finally,
\begin{align}
\begin{split}
    E(\|\bmg_{\sigma}(\bmx;N)\|^2)
    &=\text{tr}(\text{Var}(\bmg_{\sigma}(\bmx;N)))+\|\nabla f_{\sigma}(\bmx)\|^2\\
    &=\frac{1}{N}E\big(\delta_{\sigma}(\bmx;\bmu_1)^2\|\bmu_1\|^2\big)+\left(1-\frac{1}{N}\right)\|\nabla f_{\sigma}(\bmx)\|^2.
\end{split}
\end{align}

\subsection*{Proof of Theorem~\ref{thm:mcGSmoothGD}}
\label{app:convergenceofmcGSmoothGD}
This proof is a modification of the proof of Theorem~\ref{thm:GSmoothGD_non-convex}.

First, if $\bmx\in\mathbb{R}^d$, $\sigma>0$, and $\bmx'=\bmx-t\bmg_{\sigma}(\bmx;N)$, then since $f$ is $L$-smooth
\begin{align}
\begin{split}
    f_{\sigma}(\bmx')
    &\leq f_{\sigma}(\bmx)+\langle\nabla f_{\sigma}(\bmx),\bmx'-\bmx\rangle +\frac{L}{2}\|\bmx'-\bmx\|^2\\
    &\leq f_{\sigma}(\bmx)-t\langle\nabla f_{\sigma}(\bmx),\bmg_{\sigma}(\bmx;N)\rangle +\frac{Lt^2}{2}\|\bmg_{\sigma}(\bmx;N)\|^2.
\end{split}
\end{align}
The expected value of the inner product in the second term is given by
\begin{equation}
    E(\langle\nabla f_{\sigma}(\bmx),\bmg_{\sigma}(\bmx;N)\rangle)
    =\langle\nabla f_{\sigma}(\bmx),E(\bmg_{\sigma}(\bmx;N))\rangle
    =\|\nabla f_{\sigma}(\bmx)\|^2.
\end{equation}
This means
\begin{align}
\begin{split}
    E(f_{\sigma}(\bmx'))
    &\leq f_{\sigma}(\bmx)-tE(\langle\nabla f_{\sigma}(\bmx),\bmg_{\sigma}(\bmx;N)\rangle) +\frac{Lt^2}{2}E(\|\bmg_{\sigma}(\bmx;N)\|^2)\\
    &\leq f_{\sigma}(\bmx)-t\|\nabla f_{\sigma}(\bmx)\|^2 +\frac{Lt^2}{2}E(\|\bmg_{\sigma}(\bmx;N)\|^2).
\end{split}
\end{align}
Applying the Lemma~\ref{lem:varianceofmcGSmoothGD} shows
\begin{align}
\begin{split}
    E(f_{\sigma}(\bmx'))
    &\leq f_{\sigma}(\bmx)-t\|\nabla f_{\sigma}(\bmx)\|^2 +\frac{Lt^2}{2}E(\|\bmg_{\sigma}(\bmx;N)\|^2)\\
    &\leq f_{\sigma}(\bmx)-\frac{t}{2}\|\nabla f_{\sigma}(\bmx)\|^2 +\frac{Lt^2}{2N}E(\delta_{\sigma}(\bmx;\bmu)^2\|\bmu\|^2).
\end{split}
\end{align}
Rearranging gives
\begin{align}
\begin{split}
    \|\nabla f_{\sigma}(\bmx)\|^2
    &\leq\frac{2}{t}E\left(f_{\sigma}(\bmx)-f_{\sigma}(\bmx')\right)+\frac{Lt}{N}E(\delta_{\sigma}(\bmx;\bmu)^2\|\bmu\|^2).
\end{split}
\end{align}

Applying this to our gradient descent sequence, we have
\begin{align}
\begin{split}
    E(\|\nabla f_{\sigma_{k+1}}(\bmx_k)\|^2)
    &\leq \frac{2}{t}E(f_{\sigma_{k+1}}(\bmx_k)-f_{\sigma_{k+1}}(\bmx_{k+1}))+\frac{Lt}{N}E(\delta_{\sigma_{k+1}}(\bmx_k;\bmu)^2\|\bmu\|^2).
\end{split}
\end{align}
Applying Lemma~\ref{lem:differentsmoothingvalues}
\begin{multline}
    E(\|\nabla f_{\sigma_{k+1}}(\bmx_k)\|^2)
    \leq\frac{2}{t}E(f_{\sigma_{k}}(\bmx_k)-f_{\sigma_{k+1}}(\bmx_{k+1}))\\
    +\frac{Lt}{N}E(\delta_{\sigma_{k+1}}(\bmx_k;\bmu)^2\|\bmu\|^2)+\frac{Ld}{2t}|\sigma_{k+1}^2-\sigma_k^2|.
\end{multline}

From \cite[Lemma 4]{nesterov2017random}, we have that
\begin{equation}
    \|\nabla f(\bmx)\|^2
    \leq 2\|\nabla f_{\sigma}(\bmx)\|^2 +\frac{\sigma^2L^2(d+3)^3}{16}.
\end{equation}
Combining this with our computation gives
\begin{multline}
    E(\|\nabla f(\bmx_k)\|^2)
    \leq\frac{4}{t}E(f_{\sigma_{k}}(\bmx_k)-f_{\sigma_{k+1}}(\bmx_{k+1}))+\frac{2Lt}{N}E(\delta_{\sigma_{k+1}}(\bmx_k;\bmu)^2\|\bmu\|^2)\\+\frac{Ld}{t}|\sigma_{k+1}^2-\sigma_k^2|+\frac{\sigma_{k+1}^2L^3t(d+3)^3}{16N}.
\end{multline}

From Theorem 4 of \cite{nesterov2017random}, we have that for any $\bmx$
\begin{equation}
    E(\delta_{\sigma}(\bmx;\bmu)^2\|\bmu\|^2)
    \leq\frac{L^2\sigma^2(d+6)^3}{16}+(d+4)\|\nabla f(\bmx)\|^2.
\end{equation}
This means
\begin{multline}
    E(\|\nabla f(\bmx_k)\|^2)
    \leq\frac{4}{t}E(f_{\sigma_{k}}(\bmx_k)-f_{\sigma_{k+1}}(\bmx_{k+1}))+\frac{L^3\sigma_{k+1}^2t(d+6)^3}{8N}\\+\frac{2Lt(d+4)}{N}E(\|\nabla f(\bmx_k)\|^2)+\frac{Ld}{t}|\sigma_{k+1}^2-\sigma_k^2|+\frac{\sigma_{k+1}^2L^3t(d+3)^3}{16N}.
\end{multline}
So, with $\sigma_0=0$,
\begin{multline}
    k\left(1-\frac{2Lt(d+4)}{N}\right)\min_{i=0,...,k-1}E(\|\nabla f(\bmx_i)\|^2)
    \leq\frac{4}{t}(f(\bmx_0)-f_{\sigma_{k}}(\bmx_{k}))\\
    +\left(\frac{L^3t(d+6)^3}{8N}+\frac{L^3t(d+3)^3}{16N}\right)\sum_{i=1}^{k}\sigma_{i}^2
    +\frac{Ld}{t}\sigma_1^2+\frac{Ld}{t}\sum_{i=2}^{k}|\sigma_{i}^2-\sigma_{i-1}^2|.
\end{multline}

If $\frac{1}{2L(d+4)}>t$ and $N\geq 1$, then $1-\frac{2Lt(d+4)}{N}>0$.
In which case, after dividing by $k$,
\begin{multline}
    \min_{i=0,...,k-1}E(\|\nabla f(\bmx_i)\|^2)
    \leq\frac{1}{k}\left(\frac{N}{N-2Lt(d+4)}\right)\Bigg(\frac{4(f(\bmx_0)-f_{\sigma_{k}}(\bmx_{k}))}{t}\\
    +\left(\frac{L^3t(d+6)^3}{4N}+\frac{2Ld}{t}\right)\sum_{i=1}^{k}\sigma_{i}^2\Bigg)
\end{multline}


\bibliographystyle{plain}
\bibliography{GSGD}

\end{document}